\documentclass[10pt]{amsart}


\usepackage{tikz}
\usepackage{mathrsfs}
\usepackage{amsthm}

\usepackage{amsmath,amssymb}
\usepackage[all,poly,knot]{xy}
\usepackage{hyperref}
\usepackage[center]{titlesec} 
\usepackage{amsfonts}
\usepackage{color}
\titleformat{\section}{\centering\Large\bfseries}{\arabic{section}.}{1em}{}
\titleformat{\subsection}{\centering\large\bfseries}{\arabic{section}.\arabic{subsection}.}{1em}{}


\newtheorem{thm}{Theorem}[section]

\newtheorem{lem}[thm]{{Lemma}}

\newtheorem{prop}[thm]{Proposition}

\newtheorem{exa}[thm]{Example}
\theoremstyle{definition}
\newtheorem{setup}{Setup}
\newtheorem*{acknowledgement*}{Acknowledgements}

\theoremstyle{remark}
\newtheorem*{rmk}{Remark}
\newtheorem{nota}[thm]{Notation}

\numberwithin{equation}{section}


\def\Spec{\mathrm{Spec}} 
\def\Hom{\mathrm{Hom}} 
\def\mHom{\mathscr{H}\!om} 
\def\Fil{\mathrm{Fil}} 
\def\Gr{\mathrm{Gr}}

\def\End{\mathrm{End}} 
\def\DR{\mathrm{DR}} 
\def\mEnd{\mathscr{E}\!nd}
\def\w{\widetilde}


\def\oln{{\overline{\nabla}}}


\setlength{\parindent}{0em}
\usepackage[top=1.5in, bottom=1.5in, left=1in, right=1in]{geometry}


\def\red{\textcolor[rgb]{1,0,0}}


\begin{document}
\title{Deformation theory of periodic Higgs-de Rham flows}
\author{Raju Krishnamoorthy}
\email{raju@uga.edu}  
\address{Department of Mathematics, University of Georgia, Athens, GA 30605, USA}
\author{Jinbang Yang}
\email{yjb@mail.ustc.edu.cn}
\address{Institut f\"ur Mathematik, Universit\"at Mainz, Mainz 55099, Germany}
\author{Kang Zuo}
\email{zuok@uni-mainz.de}
\address{Institut f\"ur Mathematik, Universit\"at Mainz, Mainz 55099, Germany}

\maketitle
\begin{abstract}
		In this note we study the deformation theory of periodic (logarithmic) Higgs-de Rham flows. Under suitable numerical assumptions, this is equivalent to the deformation theory of torsion (logarithmic) Fontaine-Faltings modules. As an application, we formulate an \emph{ordinarity} condition, which provides a sufficient condition for a $p^n$-torsion crystalline representation to deform to a $p^{n+1}$-torsion crystalline representation.
\end{abstract}
	
\section{Introduction}
	Let $k$ be a perfect field of odd characteristic $p>0$, let $W:=W(k)$ denote the ring of Witt vectors, and let $K:=\text{Frac}(W)$ be the field of fractions. Let $X/W$ be a smooth scheme and let $D\subset X$ to be a relative simple normal crossings divisor; the pair $(X,D)$ is a called \emph{smooth pair over $W$}. For any positive integer $n>0$, denote by $(X_n,D_n)$ the reduction of $(X,D)$ modulo $p^n$.  In this article, we typically consider objects on $X$ that are allowed logarithmic poles along $D$.

	Suppose $X/W$ is projective and consider an $1$-periodic (logarithmic) Higgs-de Rham flow $HDF_{X_n}$ over $X_n$, for an positive integer $n>0$,
	\begin{equation} \tiny
		\xymatrix{  
			& (V_{X_n},\nabla_{X_n}, \mathrm{Fil}_{X_n}) \ar[dr]^{\text{Gr}_{\mathrm{Fil}_{X_n}}} & \\
			(E_{X_n},\theta_{X_n})_0 \ar[ur]^{C^{-1}}
			&& (E_{X_n},\theta_{X_n})_1 \ar@/^20pt/[ll]^{\varphi_n}\\
		}
	\end{equation} 
	
Our goal is to understand the conditions/obstructions to lifting $HDF_{X_n}$ to a $1$-periodic Higgs-de Rham flow $HDF_{X_{n+1}}$. We now state our main results and applications.
	 
\begin{thm}[Theorem~\ref{uniqueHDF}] Let $(X,D)/W$ be a smooth pair with $X/W$ projective. Let $HDF_{X_n}$ be a periodic Higgs-de Rham flow over $(X_n,D_n)$. For a given lifting $(E,\theta)_{X_{n+1}}$ of the initial graded Higgs bundle over $(X_{n+1},D_{n+1})$, there is at most one Higgs-de Rham flow with initial term  $(E,\theta)_{X_{n+1}}$ that lifts $HDF_{X_n}$ up to isomorphism. 
\end{thm}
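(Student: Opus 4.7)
My plan is to exploit the functoriality of the inverse Cartier transform $C^{-1}$ and of the associated graded so as to reduce uniqueness of the whole flow to uniqueness, up to gauge, of the lift of the periodicity isomorphism.

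First, because $(E,\theta)_{X_{n+1}}$ is prescribed, the logarithmic inverse Cartier transform (assembled from local lifts of Frobenius, as set up earlier in the paper) produces a lift $(V,\nabla,\Fil)_{X_{n+1}}$ of $(V_{X_n},\nabla_{X_n},\Fil_{X_n})$ that is canonical up to isomorphism, and taking associated graded then yields a canonical lift $(E,\theta)_{X_{n+1},1}$ of $(E_{X_n},\theta_{X_n})_1$. Consequently, two Higgs-de Rham flows over $X_{n+1}$ with initial term $(E,\theta)_{X_{n+1}}$ both lifting $HDF_{X_n}$ differ only in the choice of the periodicity isomorphism $\varphi_{n+1}$, and any two choices differ by a term of the form $p^n \delta$ for some $\delta \in \Hom_{\mathrm{Higgs}}((E,\theta)_{X_1,1},(E,\theta)_{X_1,0})$.

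To absorb this discrepancy into an isomorphism of flows, I would use automorphisms of the form $f = \mathrm{id} + p^n g$ on $(E,\theta)_{X_{n+1}}$, with $g \in \End_{\mathrm{Higgs}}((E,\theta)_{X_1})$. By functoriality, $f$ propagates through $C^{-1}$ and $\Gr$ to an automorphism $\mathrm{id} + p^n \Gr(C^{-1}(g))$ of $(E,\theta)_{X_{n+1},1}$; conjugating $\varphi_{n+1}$ by this pair shifts it by the term $p^n(\varphi_0 \circ \Gr(C^{-1}(g)) - g \circ \varphi_0) \bmod p^{n+1}$, where $\varphi_0$ denotes the mod-$p$ reduction of $\varphi_n$.

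The main obstacle is therefore to show that the map
\[
g \longmapsto \varphi_0 \circ \Gr(C^{-1}(g)) - g \circ \varphi_0
\]
from $\End_{\mathrm{Higgs}}((E,\theta)_{X_1})$ to $\Hom_{\mathrm{Higgs}}((E,\theta)_{X_1,1},(E,\theta)_{X_1,0})$ is surjective. Transporting along the isomorphism $\varphi_0$, this reduces to the Artin--Schreier-type assertion that $F - \mathrm{id}$ is surjective on the finite-dimensional $k$-vector space $\End_{\mathrm{Higgs}}((E,\theta)_{X_1,0})$, where $F$ is the Frobenius-semilinear operator on endomorphisms induced by the $1$-periodic structure via $g \mapsto \varphi_0 \circ \Gr(C^{-1}(g)) \circ \varphi_0^{-1}$. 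The $1$-periodicity forces $F$ to be bijective, and for a bijective Frobenius-semilinear endomorphism of a finite-dimensional vector space over the perfect field $k$ of characteristic $p$, surjectivity of $F - \mathrm{id}$ is standard; this is the step where I would need to be most careful, in particular with the semilinearity conventions and the compatibility with the Hodge filtration.
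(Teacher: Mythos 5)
Your first step is where the real content of the theorem lies, and you have assumed it rather than proved it. The truncated inverse Cartier transform only outputs the de Rham bundle $(V_{n+1},\nabla_{n+1})$; the Hodge filtration lifting $\Fil_{X_n}$ is an \emph{independent choice}, made at every step of the flow, and the set of such lifts is a torsor under $\mathbb H^0(\mathscr C)$ (Theorem~\ref{deformation_(Fil)}), which has no reason to vanish. So the assertion that $C^{-1}$ ``produces a lift $(V,\nabla,\Fil)_{X_{n+1}}$ \ldots canonical up to isomorphism, and taking associated graded then yields a canonical lift'' is precisely what has to be shown. The paper's proof does exactly this: by Lemma~\ref{E1 degenarate} (which uses the $1$-periodicity and the Ogus--Vologodsky isomorphism to force $E_1$-degeneration of the Hodge--de Rham spectral sequence for $\mEnd(V_1,\nabla_1)$) together with Theorem~\ref{thm: corollary of E1 degenarate}(2), any two lifts $\w\Fil_{n+1}$, $\w\Fil_{n+1}'$ of the Hodge filtration on the same $(V_{n+1},\nabla_{n+1})$ yield isomorphic filtered de Rham bundles, hence isomorphic graded Higgs bundles, and one concludes by induction along the flow. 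Without this degeneration input your ``consequently'' in the second sentence is unsupported, and with it the theorem is already proved, since the only freedom in a flow with fixed initial term is the choice of filtrations.

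The second half of your argument is aimed at a stronger statement than the one being proved, and contains a false step. A flow lifting $HDF_{X_n}$ need not carry a lifted periodicity isomorphism $\varphi_{n+1}$ at all; the theorem asserts uniqueness of the (not necessarily periodic) lifted flow, and the remark following Theorem~\ref{uniqueHDF} explicitly cautions that the periodicity map may fail to lift, and that even when it lifts it may do so in more than one way, so uniqueness \emph{as a periodic flow} is not claimed and is not expected in general. Moreover, your key reduction --- surjectivity of $F-\mathrm{id}$ for a bijective $\sigma$-semilinear operator $F$ on a finite-dimensional vector space over a perfect field $k$ --- is false without enlarging $k$: take $k=\mathbb F_p$ and $F=\mathrm{id}$ on a one-dimensional space, so $F-\mathrm{id}=0$. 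Such Artin--Schreier-type equations are only solvable after a finite extension $k'/k$, which is exactly why Theorem~\ref{main_thm} allows one; Theorem~\ref{uniqueHDF} is over the fixed $W(k)$, so this route cannot be repaired by base change.
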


The following theorem gives a precise condition for the following. Let $(X_n,D_n)\subset (X_{n+1},D_{n+1})$ be a thickening of smooth pairs over $W_n(k)\subset W_{n+1}(k)$, with $X_n/W_n$ projective. Suppose we have a logarithmic Higgs bundle $(E_n,\theta_n)$ on $(X_n,D_n)$ that initiates a 1-periodic Higgs-de Rham flow. When does there exist a lift $(\w E_{n+1},\w{\theta}_{n+1})$ of $(E_n,\theta_n)$ together with a lift $(X_{n+2},D_{n+2})$ of $(X_{n+1},D_{n+1})$ over $W_{n+2}(k)$ such that $(\w E_{n+1},\w{\theta}_{n+1})$ initiates a 1-periodic Higgs-de Rham flow with respect to the thickening $(X_{n+1},D_{n+1})\subset (X_{n+2},D_{n+2})$? In the following, $\mathbb{K}$ and $H$ are explicitly defined in \ref{subsection:ordinary}, and $IC$ stands for the \emph{parametrized} inverse Cartier transform.
\begin{thm}[Theorem~\ref{main_thm}] Let $(X_n,D_n)/W_n$ be a smooth pair with $X_n/W_n$ projective. Suppose $\mathbb K$ is not empty and the projection $H\rightarrow \mathbb H^1\big(\mathrm{Gr}^0 \DR(\mEnd(E_1,\theta_1))\big)$ is surjective. Then there exists some finite extension $k'/k$ and $((\w E_{n+1},\w \theta_{n+1}),(X_{n+2},D_{n+2})) \in \mathbb K_{k'}$ such that
	\[\Gr\circ \mathrm{IC}((\w E_{n+1},\w \theta_{n+1}),(X_{n+2},D_{n+2})) = (\w E_{n+1},\w \theta_{n+1}).\]
	In other words, $(\w E_{n+1},\w \theta_{n+1})$ is a $1$-periodic Higgs bundle under the lifting $(X_{n+1},D_{n+1})\subset (X_{n+2},D_{n+2})$.	
\end{thm}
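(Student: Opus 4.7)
The overall strategy is to recast $1$-periodicity as a fixed-point condition for $\Phi := \Gr \circ \mathrm{IC}$ acting on $\mathbb K$, reduce this to an additive equation inside $H$, and then solve that equation after a finite extension of $k$ by a Lang--Steinberg type argument.

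First, since $\mathbb K$ is nonempty, I fix some $\xi_0 = ((\w E_{n+1}, \w \theta_{n+1}), (X_{n+2}, D_{n+2})) \in \mathbb K$ and apply $\Phi$. Because $HDF_{X_n}$ is already $1$-periodic, the reduction modulo $p^n$ of $\Phi(\xi_0)$ coincides with $(E_n, \theta_n)$. Hence $\Phi(\xi_0)$ and the Higgs-bundle component of $\xi_0$ are two lifts of the same Higgs bundle $(E_n,\theta_n)$ to the same thickening, and their difference is captured by a canonical class $\alpha(\xi_0) \in \mathbb H^1\!\big(\Gr^0 \DR(\mEnd(E_1,\theta_1))\big)$, which is precisely the group controlling such lifts. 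Moreover, by Theorem~\ref{uniqueHDF}, any fixed point of $\Phi$ in $\mathbb K$ satisfies the uniqueness property, so the question reduces purely to existence.

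Next, I would upgrade the surjectivity hypothesis to a rich action on $\mathbb K$: elements of $H$ act by simultaneously modifying the Higgs-bundle lift and, where necessary, the base lifting $(X_{n+2}, D_{n+2})$, and the surjection $H \twoheadrightarrow \mathbb H^1(\Gr^0\DR(\mEnd(E_1,\theta_1)))$ guarantees that every desired change of the Higgs-bundle lift can be realized by some $\eta \in H$. The key compatibility is that $\Phi$ intertwines this $H$-action with a semilinear operator $\sigma$ on $H$ coming from the Frobenius pullback inside the parametrized inverse Cartier transform; concretely,
\[
  \Phi(\xi_0 \cdot \eta) \;=\; \Phi(\xi_0) \cdot \sigma(\eta).
\]
The fixed-point condition $\Phi(\xi_0 \cdot \eta) = \xi_0 \cdot \eta$ then translates into the Artin--Schreier-type equation $(\sigma - 1)(\eta) \equiv \widetilde\alpha(\xi_0) \pmod{\ker(H \to \mathbb H^1)}$, where $\widetilde\alpha(\xi_0)$ is any preimage of $\alpha(\xi_0)$ in $H$ supplied by the surjectivity assumption.

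Finally, I would solve this equation. Over $k$ itself, $\sigma - 1$ need not be surjective, but after passing to $\bar k$ the operator $\sigma - 1$ becomes surjective on $H_{\bar k}$ by the standard fact that Frobenius-semilinear endomorphisms of finite-dimensional $\bar k$-vector spaces have surjective additive Lang maps. Any solution $\eta \in H_{\bar k}$ descends to a finite extension $k'/k$ by a spreading-out argument, and the modified element $\xi_0 \cdot \eta \in \mathbb K_{k'}$ is the desired $1$-periodic lift. The main technical obstacle is the middle step: correctly identifying the operator $\sigma$ and verifying both its Frobenius-semilinearity and the intertwining identity $\Phi(\xi_0 \cdot \eta) = \Phi(\xi_0)\cdot\sigma(\eta)$. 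This requires a careful calculation inside the deformation complex of the parametrized inverse Cartier transform; once that is done, the Lang-style argument makes existence of the fixed point routine.
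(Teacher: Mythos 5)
Your proposal is correct and takes essentially the same route as the paper: the intertwining identity you defer as the main technical step is exactly the paper's Lemma~\ref{torsor pair} on the torsor map induced by the parametrized inverse Cartier transform, and your class $\alpha(\xi_0)$ is the paper's $\Delta$. The only cosmetic difference is that you work with $\eta\in H$ modulo $\ker\big(H\rightarrow \mathbb H^1\big(\mathrm{Gr}^0 \DR(\mEnd(E_1,\theta_1))\big)\big)$ whereas the paper chooses a linear section $(\mathrm{id},\tau)$ of that surjection; both reduce periodicity to the same Artin--Schreier (Lang-type) semilinear equation, solvable after a finite extension of $k$.
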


We briefly explain the main ingredients to the proofs of the main results. 
	
	Suppose there exists a graded Higgs bundle $(E_{X_{n+1}},\theta_{X_{n+1}})_0$ which lifts $(E_{X_n},\theta_{X_n})_0$. Taking the $n$-truncated inverse Cartier transform, one gets a de Rham bundle 
	\[(V_{X_{n+1}},\nabla_{X_{n+1}}) = C^{-1}\big((E_{X_{n+1}},\theta_{X_{n+1}})_0, (V_{X_n},\nabla_{X_n}, \mathrm{Fil}_{X_n}),\varphi_n\big).\]
	Suppose there exists a Hodge filtration $\mathrm{Fil}_{X_{n+1}}$ on $(V_{X_{n+1}},\nabla_{X_{n+1}})$. Then the associated grading Higgs bundle
	\[(E_{X_{n+1}},\theta_{X_{n+1}})_1 = \mathrm{Gr}(V_{X_{n+1}},\nabla_{X_{n+1}},\mathrm{Fil}_{X_{n+1}})\]
	lifts $(E_{X_n},\theta_{X_n})_1$. Via the isomorphism $\varphi_n$, both $(E_{X_{n+1}},\theta_{X_{n+1}})_1$ and $(E_{X_{n+1}},\theta_{X_{n+1}})_0$ lift $(E_{X_n},\theta_{X_n})_0$; therefore, their difference defines an obstruction to lifting the periodicity map $\varphi_n$.
	
	In conclusion, there are three obstructions to lifting a 1-periodic Higgs-de Rham flow:
	\begin{enumerate}
	\item the obstruction to lift the initial Higgs bundle;
	\item the obstruction to lift the Hodge filtration; and
	\item the obstruction to lift the periodicity map $\varphi_n$.
\end{enumerate}

We will study each of these in turn, using pure deformation theory. These deformation questions are general and have nothing to do with the ring of Witt vectors; we state our assumptions and results below.

\begin{setup}\label{setup}Let $S$ be a noetherian scheme, let $S\hookrightarrow \w{S}$ be a square-zero thickening, i.e., the ideal sheaf $\mathfrak a$ of $S$ satisfies $\mathfrak a^2=0$. Let $(\w{X},\w{D})$ be a smooth pair over $\w S$; that is, $\w{X}$ is a smooth $\w{S}$-scheme and $\w{D}\subset \w{X}$ is a $\w{S}$-flat relative simple normal crossings divisor with $D\times_{S}\w S$ reduced.  Set $(X,D): = (\w{X}\times_{\w{S}} S, \w{D}\times_{\w{S}} S)$. Then $\mathcal I =\mathcal O_{\w X} \cdot\mathfrak a$ is the ideal of definition of $X\hookrightarrow \w X$, and $\mathcal I^2=0$.

Set $\Omega^1_{\w X/\w S}$ to be the sheaf of relative 1-forms and $\Omega^1_{\w X/\w S}(\log \widetilde D)$ to be the sheaf of relative 1-forms with logarithmic poles along $\widetilde D$. (This latter sheaf  $\Omega^1_{\w X/\w S}(\log \widetilde D)$ may be also defined as the sheaf of 1-forms on the f.s. log scheme induced from the pair $(\w X,\w S)$) Both of these sheaves are finite and locally free. \end{setup}

In the context of Setup \ref{setup}, we show the following results on lifting (filtered) de Rham bundles and graded Higgs bundles. As explained above these results will be used to characterize when a Higgs-de Rham flow modulo $p^n$ lifts to a Higgs-de Rham flow modulo $p^{n+1}$. Note that Setup \ref{setup} does not assume that $X/S$ is projective.
\begin{thm}[Theorem~\ref{deformation_(V,nabla,Fil)}] Notation as in Setup \ref{setup} and let $(V,\nabla,\Fil)$ be a (logarithmic) filtered de Rham bundle over $(X,D)/S$. Then
	\begin{itemize}
		\item[(1).] the obstruction to lifting $(V,\nabla,\Fil)$ to a filtered de Rham bundle over $(\w X,\w D)/\w S$ lies in $\mathbb H^2\big(\Fil^0 \DR(\mEnd(V,\nabla)_\mathcal I)\big)$;
		\item[(2).] if $(V,\nabla,\Fil)$ has lifting $(\widetilde{V},\widetilde{\nabla},\w\Fil)$, then the set of liftings is a torsor for $\mathbb H^1\big(\Fil^0 \DR(\mEnd(V,\nabla)_\mathcal I)\big)$;
		\item[(3).] the infinitesimal automorphism group of $(\widetilde{V},\widetilde{\nabla},\w\Fil)$ over $(V,\nabla,\Fil)$ is $\mathbb H^0\big(\Fil^0 \DR(\mEnd(V,\nabla)_\mathcal I)\big)$.
	\end{itemize}
\end{thm}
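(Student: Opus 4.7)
The plan is a standard Čech-theoretic computation showing that the deformation problem for a filtered flat bundle is controlled by the filtered de Rham complex of its endomorphism sheaf. Recall that $\mEnd(V)$ inherits a filtration via $\Fil^k \mEnd(V) := \{\phi : \phi(\Fil^j V) \subset \Fil^{j+k} V \text{ for all } j\}$, and Griffiths transversality of $(V,\nabla,\Fil)$ implies that the induced connection on $\mEnd(V)$ shifts this filtration by at most one; hence
\[
\Fil^0 \DR(\mEnd(V,\nabla)) = \big[\Fil^0\mEnd(V) \xrightarrow{\nabla} \Fil^{-1}\mEnd(V) \otimes \Omega^1_{X/S}(\log D) \to \cdots\big]
\]
is a genuine subcomplex of the full de Rham complex of $\mEnd(V)$. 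Since $\mathcal I^2 = 0$, tensoring with $\mathcal I$ yields a complex of $\mathcal O_X$-modules, which I denote $K^\bullet := \Fil^0 \DR(\mEnd(V,\nabla)_{\mathcal I})$.

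I would first establish local existence of liftings on a sufficiently fine affine cover $\{\widetilde U_i\}$ of $\w X$: the locally free $V$ lifts, a log connection lifts by choosing local generators and lifting the entries of the connection matrix (using surjectivity of $\Omega^1_{\w X/\w S}(\log \w D) \twoheadrightarrow \Omega^1_{X/S}(\log D)$), and each subbundle $\Fil^p$ lifts as a subbundle of the lifted $\wtv_i$. Next, given any global lifting $(\wtv,\wtn,\wtF)$, an infinitesimal automorphism restricting to the identity modulo $\mathcal I$ is of the form $\mathrm{id} + \eta$ with $\eta \in \mEnd(V) \otimes \mathcal I$; compatibility with $\wtn$ and $\wtF$ translates exactly to $\nabla(\eta) = 0$ and $\eta \in \Fil^0\mEnd(V)\otimes\mathcal I$. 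Via the standard hypercohomology spectral sequence $H^p(X,\mathcal H^q(K^\bullet))\Rightarrow \mathbb H^{p+q}(X,K^\bullet)$, this identifies the group of infinitesimal automorphisms with $\mathbb H^0(K^\bullet)$, proving (3).

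For (2), if two global liftings $\widetilde L$ and $\widetilde L'$ both exist, then locally they are isomorphic by the existence argument applied to the difference problem; the resulting isomorphisms, minus the identity, assemble into a Čech $1$-cocycle in $K^\bullet$, and the converse construction (twisting a fixed lifting by a $1$-cocycle) shows the set of liftings is an $\mathbb H^1(K^\bullet)$-torsor. For (1), I would choose local liftings $L_i$ on $\w U_i$ together with $\mathcal O$-linear isomorphisms $\phi_{ij}: L_j|_{\w U_{ij}} \to L_i|_{\w U_{ij}}$ of the underlying filtered bundles lifting the identity modulo $\mathcal I$ (these exist by the local argument applied only to $(V,\Fil)$). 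The failures $\phi_{ij}^*\wtn_i - \wtn_j$ then take values in the degree-$1$ term $\Fil^{-1}\mEnd(V) \otimes \Omega^1(\log D) \otimes \mathcal I$ of $K^\bullet$, while the triple-overlap defects $\phi_{ij}\phi_{jk}\phi_{ki} - \mathrm{id}$ take values in the degree-$0$ term $\Fil^0\mEnd(V) \otimes \mathcal I$. A routine calculation shows these assemble into a total $2$-cocycle $\omega$ in $\check C^\bullet_{\mathrm{tot}}(\{\w U_i\}, K^\bullet)$, whose class $[\omega] \in \mathbb H^2(X, K^\bullet)$ is independent of all choices and vanishes if and only if a global lifting exists.

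The main technical subtlety is confirming that the mixed Čech--de Rham differential correctly binds the connection-type discrepancies (degree $1$) and the automorphism-type triple-overlap cocycle (degree $0$) into a single total $2$-cocycle; Griffiths transversality is essential here to keep the data inside $\Fil^0 \DR$. One further bookkeeping point is that the filtration-lifting obstruction, which at first glance looks like a separate piece of data, is absorbed into $K^\bullet$ via the identification between lifting the subbundle $\Fil^p\wtv$ and a class in the quotient $\mEnd(V)/\Fil^0\mEnd(V) \otimes \mathcal I$; this is why prescribing $\phi_{ij}$ to preserve $\Fil$ is harmless. Once these points are verified, all three claims follow uniformly from the hypercohomology formalism.
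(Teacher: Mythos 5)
Your overall framework---the \v{C}ech resolution of $K^\bullet=\Fil^0\DR(\mEnd(V,\nabla)_{\mathcal I})$, local liftings on a trivializing affine cover, filtration-preserving comparison isomorphisms $\phi_{ij}$, and Griffiths transversality keeping all discrepancies inside $\Fil^0\DR$---is the same as the paper's. However, there is a genuine gap in your part (1): your obstruction cochain has only two components, in \v{C}ech bidegrees $(2,0)$ (the triple-overlap defects) and $(1,1)$ (the connection discrepancies $\phi_{ij}^*\w\nabla_i-\w\nabla_j$), whereas a total $2$-cochain for $K^\bullet$ also has a $(0,2)$ component, and yours is implicitly zero. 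The cocycle condition forces $\oln^{\End}$ of the $(1,1)$ component to equal $\delta$ of the $(0,2)$ component, and a direct computation (the paper's identity 1c) in Lemma~\ref{2-cocycle_(V,nabla,Fil)}) gives $\nabla^{\End}(\w\nabla_j\circ f_{ij}-f_{ij}\circ\w\nabla_i)=\w\nabla_j^2\circ f_{ij}-f_{ij}\circ\w\nabla_i^2$, which does not vanish in general: a local lifting of $\nabla$ obtained by ``lifting the entries of the connection matrix'' is not integrable, and an integrable local lifting need not exist at all, since the obstruction on an affine open lives in the degree-$2$ hypercohomology of the restricted complex, which is computed by the global-sections complex and is typically nonzero (de Rham-type cohomology is not coherent cohomology; in characteristic $p$ it is already nonzero for affine space). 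This is exactly why the paper allows the local $\w\nabla_i$ to be non-integrable (see the remark after Lemma~\ref{local_data_(V,nabla,Fil)}) and includes the curvature term $\big(\w\nabla_i\circ\w\nabla_i\big)_i$ as the third component of $c(V,\nabla,\Fil)$ in (\ref{obs:c(v,nabla,Fil)}); the identities 1a)--1d) there are what close the cocycle, and without the curvature component your two-term cochain is simply not closed.

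The same bookkeeping omission appears, less harmfully, in your part (2): the difference class attached to two global liftings must carry, besides $(f_j-f_i)$ in bidegree $(1,0)$, the component $\w\nabla'\circ f_i-f_i\circ\w\nabla$ in bidegree $(0,1)$; it is a cocycle only because both \emph{global} connections are integrable (the paper's identity 1c) in Lemma~\ref{1-cocycle_(V,nabla,Fil)}), so you should state this component explicitly rather than describe the class as ``the isomorphisms minus the identity.'' Once you add the curvature and connection-discrepancy components and verify these identities, together with the independence-of-choices argument by passing to a common refinement, your proof coincides with the paper's.
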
  
	
\begin{thm}[Theorem~\ref{deformation_(Fil)}] Notation as in Setup \ref{setup} and let $(V,\nabla,\Fil)$ be a filtered de Rham bundle over $(X,D)/S$. Let $(\w V,\w\nabla)$ be lifting of the underlying de Rham bundle $(V,\nabla)$ over $(\w X,\w D)/\w S$. Then
	\begin{itemize}
		\item[1).] the obstruction to lifting $\Fil$ to a Hodge filtration on $(\w V,\w\nabla)$ lies in $\mathbb H^1(\mathscr C)$;
		\item[2).] if $\Fil$ has a lifting, then the set of liftings is a torsor for $\mathbb H^0(\mathscr C)$.
	\end{itemize}
\end{thm}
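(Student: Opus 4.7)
The plan is to deduce this from the preceding theorem together with the classical (unfiltered) statement that $\mathbb H^i\bigl(\DR(\mEnd(V,\nabla)_\mathcal I)\bigr)$ controls the deformation theory of the de Rham bundle $(V,\nabla)$ alone (proved by the same \v{C}ech argument used for Theorem~\ref{deformation_(V,nabla,Fil)} after forgetting the filtration). The natural candidate for $\mathscr C$ is the quotient complex
\[
\mathscr C \;=\; \DR\bigl(\mEnd(V,\nabla)_\mathcal I\bigr)\big/\Fil^0\,\DR\bigl(\mEnd(V,\nabla)_\mathcal I\bigr),
\]
which fits into a short exact sequence of complexes whose long exact hypercohomology sequence links the three deformation problems: of $(V,\nabla,\Fil)$, of $(V,\nabla)$, and of $\Fil$ relative to a fixed lift of $(V,\nabla)$. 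Using that the image of the obstruction class for $(V,\nabla,\Fil)$ in $\mathbb H^2(\DR)$ is the obstruction for $(V,\nabla)$ alone, and that the latter vanishes by hypothesis, one formally recovers the statement from the long exact sequence.

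For a hands-on proof, I would cover $X$ by affine opens $U_\alpha$ on which $V$ is free and $\Fil$ is split by free direct summands. On each $U_\alpha$ one builds a local lift $\w\Fil_\alpha$ of $\Fil|_{U_\alpha}$ to $\w V|_{U_\alpha}$ satisfying Griffiths transversality for $\w\nabla|_{U_\alpha}$; a naive sub-bundle lift has a controlled failure of Griffiths transversality with values in $(\Fil^{p-1}V/\Fil^pV)\otimes \Omega^1_{\w X/\w S}(\log\w D)\otimes\mathcal I$, which can be cancelled by modifying the lift in the same group. On an overlap $U_{\alpha\beta}$, two such lifts differ by an automorphism $\mathrm{id}+\phi_{\alpha\beta}$ of $\w V|_{U_{\alpha\beta}}$ with $\phi_{\alpha\beta}\in\mEnd(V)_\mathcal I|_{U_{\alpha\beta}}$ well-defined modulo $\Fil^0\mEnd(V)_\mathcal I$; compatibility with $\w\nabla$ forces $\nabla\phi_{\alpha\beta}$ to vanish modulo $\Fil^{-1}\mEnd(V)_\mathcal I\otimes\Omega^1(\log D)$. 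These data assemble into a \v{C}ech $1$-cocycle in $\mathscr C$ whose class is independent of choices and is the obstruction in $\mathbb H^1(\mathscr C)$. Assertion (2) follows by comparing two global lifts via a similar automorphism of $\w V$ which is the identity modulo $\mathcal I$, producing a class in $\mathbb H^0(\mathscr C)$.

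The main obstacle is the local existence of Hodge filtration lifts compatible with Griffiths transversality. This reduces, on a trivializing open, to a direct linear-algebra computation in the associated graded, exploiting the square-zero hypothesis $\mathcal I^2=0$ to linearize all correction terms. Once local existence is secured, the rest of the argument is formal bookkeeping of \v{C}ech cochains, entirely parallel to the proof of Theorem~\ref{deformation_(V,nabla,Fil)}, and an additional sanity check that the resulting class agrees with the one produced by the connecting homomorphism of the long exact sequence above.
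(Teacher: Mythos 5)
The decisive gap is your local existence claim: that on a trivializing open a naive lift of $\Fil$ to $\w V$ can always be corrected so as to satisfy Griffiths transversality with respect to the \emph{given} connection $\w\nabla$. First, the failure of transversality is a section of $\bigl(\mEnd(V)_{\mathcal I}/\Fil^{-1}\mEnd(V)_{\mathcal I}\bigr)\otimes\Omega^1_{X/S}(\log D)$, not of $(\Fil^{p-1}V/\Fil^{p}V)\otimes\Omega^1\otimes\mathcal I$; more importantly, changing the filtration lift changes this failure by $\oln^{\End}(\phi)$ with $\phi$ a section of $\mEnd(V)_{\mathcal I}/\Fil^{0}\mEnd(V)_{\mathcal I}$, i.e.\ by a \emph{differential} operator rather than an $\mathcal O_X$-linear one, so the cancellation you assert is not automatic even on an affine. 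Concretely, take $S=\Spec k$ with $k$ of characteristic $p$, $\w S=\Spec k[\epsilon]/(\epsilon^2)$, $X=\Spec k[t]$, $D=\emptyset$, $V=\mathcal O e_2\oplus\mathcal O e_1\oplus\mathcal O e_0$ with $\Fil^2=\langle e_2\rangle\subset\Fil^1=\langle e_2,e_1\rangle\subset\Fil^0=V$, $\nabla=\mathrm{d}$, and $\w\nabla=\mathrm{d}+\epsilon\,t^{p-1}\mathrm{d}t\cdot E_{02}$ where $E_{02}$ sends $e_2$ to $e_0$; this is an integrable lift of $\nabla$. Every lift of the filtration can be normalized to $\w\Fil^2=\langle e_2+\epsilon(w_1e_1+u_0e_0)\rangle$, $\w\Fil^1=\langle e_2+\epsilon u_0e_0,\ e_1+\epsilon v_0e_0\rangle$, and Griffiths transversality for $\w\Fil^2$ with respect to $\w\nabla$ amounts to $\mathrm{d}u_0=-t^{p-1}\mathrm{d}t$, which is unsolvable in $k[t]$ and even in $k[[t]]$, hence on no neighborhood of $t=0$. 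So the \v{C}ech $1$-cocycle you propose cannot even be constructed in general. This is exactly why the paper's system of local data (Lemma~\ref{local_data_(Fil)}) introduces auxiliary local connections $\w\nabla_i$, Griffiths-transverse for the chosen local filtration lifts $\w\Fil_i$ but different from $\w\nabla$, and why the obstruction cocycle $c(\Fil)$ of Lemma~\ref{1-cocycle_(Fil)} carries the extra component $\big(\iota^{-1}(\w\nabla_i-\w\nabla)\big)_i$ in \v{C}ech bidegree $(0,1)$ of $\mathscr C$; a cocycle concentrated in bidegree $(1,0)$, as in your sketch, cannot represent a general class of $\mathbb H^1(\mathscr C)$.

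The long-exact-sequence reduction in your first paragraph also does not stand on its own. Part (1) of the theorem attaches a class in $\mathbb H^1(\mathscr C)$ to an \emph{arbitrary} lift $(\w V,\w\nabla)$, with no assumption that $(V,\nabla,\Fil)$ is liftable at all (its obstruction in $\mathbb H^2\big(\Fil^0\DR(\mEnd(V,\nabla)_{\mathcal I})\big)$ may be nonzero while mapping to zero in $\mathbb H^2\big(\DR(\mEnd(V,\nabla)_{\mathcal I})\big)$), and part (2) concerns the set of actual filtrations on the fixed $(\w V,\w\nabla)$, not isomorphism classes of lifts; neither follows formally from Theorem~\ref{deformation_(V,nabla,Fil)} and Proposition~\ref{deformation_(V,nabla)} without a cocycle-level construction and compatibilities of the kind recorded in Lemma~\ref{torsor map}. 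So the hands-on argument is essential, and it is precisely there that your proposal breaks down.
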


\begin{thm}[Theorem~\ref{deformation_(E,theta,Gr)}] Notation as in Setup \ref{setup} and let $(E,\theta,\Gr)$ be a graded Higgs bundle over $(X,D)/S$. Then
	\begin{itemize}
		\item[1).] the obstruction to lifting $(E,\theta,\Gr)$ to a graded Higgs bundle over $(\w X,\w D)/\w S$
		lies in $\mathbb H^2\big(\mathrm{Gr}^0 \DR(\mEnd(E,\theta))_{\mathcal I}\big)$;
		\item[2).] if $(E,\theta,\Gr)$ has a graded lifting $(\widetilde{E},\widetilde{\theta},\w\Gr)$, then the set of liftings is a torsor for $\mathbb H^1\big(\mathrm{Gr}^0 \DR(\mEnd(E,\theta))_{\mathcal I}\big)$;
		\item[3).] the infinitesimal automorphism group of $(\widetilde{E},\widetilde{\theta},\w\Gr)$ over $(E,\theta,\Gr)$ is $\mathbb H^0\big(\mathrm{Gr}^0 \DR(\mEnd(E,\theta))_{\mathcal I}\big)$.
	\end{itemize}
\end{thm}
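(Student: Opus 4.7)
The plan is to follow the classical Čech-theoretic approach to deformation theory, parallel in structure to the proofs of Theorems~\ref{deformation_(V,nabla,Fil)} and~\ref{deformation_(Fil)}. First I would unpack the complex: the grading on $E$ induces a decomposition $\mEnd(E)=\bigoplus_k\mEnd(E)^k$, and since $\theta$ has Hodge degree $-1$, the complex $\mathrm{Gr}^0 \DR(\mEnd(E,\theta))_{\mathcal I}$ is explicitly
\[\mEnd(E)^0\otimes\mathcal I \xrightarrow{[\theta,-]} \mEnd(E)^{-1}\otimes\Omega^1_{X/S}(\log D)\otimes\mathcal I \xrightarrow{[\theta,-]} \mEnd(E)^{-2}\otimes\Omega^2_{X/S}(\log D)\otimes\mathcal I \to\cdots,\]
i.e.\ the degree-$0$ component of the Higgs (``Dolbeault'') complex of $\mEnd(E)$ twisted by $\mathcal I$. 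This is the DGLA whose Maurer--Cartan elements classify deformations of graded Higgs bundles.

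Next I would work on an affine open cover $\{U_i\}$ of $\w X$ on which each homogeneous summand $E^{p,q}$ is free. Locally, the graded module $(E,\Gr)|_{U_i}$ and the degree-$(-1)$ Higgs map $\theta|_{U_i}$ lift (both are free-module problems), and using $\mathcal I^2=0$ one checks that the integrability $\w\theta_i\wedge\w\theta_i=0$ can be arranged on each $U_i$. On double overlaps $U_{ij}$, any two such local liftings differ by a pair $(\alpha_{ij},\beta_{ij})$ with $\alpha_{ij}\in\mEnd(E)^0\otimes\mathcal I$ and $\beta_{ij}\in\mEnd(E)^{-1}\otimes\Omega^1_{X/S}(\log D)\otimes\mathcal I$. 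The requirement that both sides be compatible lifts of $\theta$ and the triple-overlap condition assemble these differences into a Čech hypercocycle of total degree $2$ in the complex above, producing the obstruction class in $\mathbb H^2$. Vanishing of this class allows the local liftings to be adjusted so as to glue to a global graded Higgs lift, proving part (1).

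Parts (2) and (3) proceed by the same template. For (2), two global liftings differ, after refinement, by a $0$-cochain $(\alpha_i,\beta_i)$ of the same type; the condition that both are valid liftings forces it to be a hypercocycle of total degree $1$, yielding a class in $\mathbb H^1$, and a standard gauge-equivalence argument shows the set of isomorphism classes of lifts is a torsor. For (3), an infinitesimal automorphism of $(\w E,\w\theta,\w\Gr)$ restricting to the identity modulo $\mathcal I$ has the form $\mathrm{id}+\alpha$ with $\alpha\in\mEnd(E)^0\otimes\mathcal I$; compatibility with $\w\theta$, modulo $\mathcal I^2$, reduces to $[\theta,\alpha]=0$, identifying the automorphism group with $\mathbb H^0$ of the complex.

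The main obstacle is careful bookkeeping: verifying that the Čech differentials, the commutators $[\theta,-]$, and the integrability condition $\w\theta\wedge\w\theta=0$ interlock so as to produce honest hypercocycles in $\mathrm{Gr}^0 \DR(\mEnd(E,\theta))_{\mathcal I}$, without shifts or correction terms. The hypothesis $\mathcal I^2=0$ is decisive, since it linearizes all second-order contributions---in particular the integrability obstruction---so that they match exactly the differential $[\theta,-]$. Most of this linear-algebraic verification is formally identical to the filtered de Rham case treated in Theorem~\ref{deformation_(V,nabla,Fil)}, with $[\nabla,-]$ replaced by $[\theta,-]$ and filtrations replaced by gradings, so I would present it by recording the cocycle identities and referring back to the earlier argument.
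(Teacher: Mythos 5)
Your overall strategy---Čech hypercocycles valued in $\Gr^0\DR(\mEnd(E,\theta))_{\mathcal I}$, built from local liftings of the graded bundle and of the degree $-1$ field and compared on overlaps---is the same as the paper's, which runs the template of Theorem~\ref{deformation_(V,nabla,Fil)} with connections replaced by Higgs fields and filtrations by gradings. However, there is one genuine gap: the claim that ``using $\mathcal I^2=0$ one checks that the integrability $\w\theta_i\wedge\w\theta_i=0$ can be arranged on each $U_i$.'' This is precisely the step the paper is careful \emph{not} to take: in its system of local data the $\w\theta_i$ are only required to be $\mathcal O_{\w X}$-linear, graded lifts of $\theta$, explicitly \emph{not} integrable, and the curvatures $(\w\theta_i\circ\w\theta_i)_i$ enter as the $C^0(\mathcal U,\mathcal F^2)$-component of the obstruction $2$-cocycle (\ref{obs:c(E,theta,Gr)}). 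Your cocycle, having assumed local integrability, has no such component, and the cocycle identities in the last bidegrees (the analogues of 1c) and 1d) in Lemma~\ref{2-cocycle_(V,nabla,Fil)}) are exactly where this matters.

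The claim is unjustified: if $\w\theta_i$ is any graded lift, then $\w\theta_i\wedge\w\theta_i$ lies in $\Gr^{-2}\mEnd(E)_{\mathcal I}\otimes\Omega^2_{X/S}(\log D)$, and replacing $\w\theta_i$ by $\w\theta_i+\eta$ with $\eta\in\Gr^{-1}\mEnd(E)_{\mathcal I}\otimes\Omega^1_{X/S}(\log D)$ changes the curvature by $\theta^\End(\eta)$---this linearization is all that $\mathcal I^2=0$ buys. So arranging $\w\theta_i\wedge\w\theta_i=0$ on $U_i$ amounts to solving $\theta^\End(\eta)=-\w\theta_i\wedge\w\theta_i$, i.e.\ to the vanishing of a class in the cokernel of $\theta^\End$ in degree $2$; on an affine this is a class in $H^2$ of the complex of global sections of $\Gr^0\DR(\mEnd(E,\theta))_{\mathcal I}$, which is not zero in general (a Higgs complex has no Poincar\'e lemma, unlike the structure sheaf situation), and you give no argument that this particular class vanishes. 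The repair is simple and brings you back to the paper's proof: allow non-integrable local $\w\theta_i$ and record $(\w\theta_i\circ\w\theta_i)_i$ as the third component of the $2$-cocycle; correspondingly, when the class vanishes, the coboundary also adjusts each $\w\theta_i$ (not only the gluing maps) so that the glued field is integrable. A minor bookkeeping point in your part (2): for two global liftings the comparison isomorphisms $f_i$ exist only locally and need not agree, so the $\mathcal F^0$-part of the degree-$1$ cocycle is the $1$-cochain $(f_j-f_i)_{ij}$ on double overlaps, not a $0$-cochain; only the $\mathcal F^1$-part $(\w\theta'\circ f_i-f_i\circ\w\theta)_i$ lives on single opens. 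Parts (3) and the gauge/torsor arguments are fine and agree with the paper.
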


\begin{thm}[Theorem~\ref{thm: corollary of E1 degenarate}]
	Notation as in Setup \ref{setup} and let $(V,\nabla,\Fil)$ be a filtered de Rham bundle on $(X,D)/S$. Suppose that the Hodge-de Rham spectral sequence (\ref{spectral sequence}) attached to $(V,\nabla,\Fil)$ degenerates at $E_1$.
	\begin{itemize}
		\item[(1).] If $(V,\nabla)$ is liftable, then $(V,\nabla,\Fil)$ is also liftable.
		\item[(2).] Let $(\w V,\w \nabla)$ be a lifting of the underlying de Rham bundle of $(V,\nabla,\Fil)$. Then for two liftings $\w\Fil$ and $\w\Fil'$ of the Hodge filtration $\Fil$ there exists an isomorphism
		\[(\w V,\w \nabla,\w \Fil)\overset{\simeq}{\longrightarrow} (\w V,\w \nabla,\w \Fil')\]
		of filtered de Rham bundles on $(\w X,\w D)/\w S$.
	\end{itemize}
\end{thm}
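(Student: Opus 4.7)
The plan is to deduce this corollary from Theorems~\ref{deformation_(V,nabla,Fil)} and~\ref{deformation_(Fil)} by reading off the cohomological consequences of $E_1$-degeneration, applied to the endomorphism bundle $\mEnd(V,\nabla)$ with its induced filtration. Throughout, the key mechanism is that $E_1$-degeneration of the Hodge–de Rham spectral sequence is equivalent to the Hodge filtration on hypercohomology being \emph{strict}, hence the natural maps $\mathbb H^i(\Fil^p \DR) \to \mathbb H^i(\DR)$ are injective and the short exact sequences of filtered complexes split on cohomology in the appropriate sense.

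For part (1): the obstruction to lifting $(V,\nabla,\Fil)$ to $(\w X,\w D)/\w S$ lies in $\mathbb H^2\bigl(\Fil^0 \DR(\mEnd(V,\nabla)_\mathcal I)\bigr)$ by Theorem~\ref{deformation_(V,nabla,Fil)}(1). By the analogous (and classical) unfiltered statement, the obstruction to lifting the underlying de Rham bundle $(V,\nabla)$ lies in $\mathbb H^2\bigl(\DR(\mEnd(V,\nabla)_\mathcal I)\bigr)$, and by hypothesis this second obstruction vanishes. A direct inspection of the \v{C}ech cocycle underlying the construction in Theorem~\ref{deformation_(V,nabla,Fil)} shows that the two obstruction classes are compatible under the natural map on hypercohomology induced by $\Fil^0 \DR \hookrightarrow \DR$. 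Since $E_1$-degeneration forces this map to be injective in all degrees, the filtered obstruction must also vanish, and Theorem~\ref{deformation_(V,nabla,Fil)}(1) produces the desired lift.

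For part (2): by Theorem~\ref{deformation_(Fil)}(2), the set of lifts of $\Fil$ to the fixed $(\w V,\w\nabla)$ is a torsor for $\mathbb H^0(\mathscr C)$, where $\mathscr C$ fits into a distinguished triangle with $\Fil^0\DR(\mEnd(V,\nabla)_\mathcal I)$ and $\DR(\mEnd(V,\nabla)_\mathcal I)$ (essentially the quotient complex, as one reads off by comparing Theorems~\ref{deformation_(V,nabla,Fil)} and~\ref{deformation_(Fil)}). The long exact sequence attached to this triangle reads
\[\cdots \to \mathbb H^0\bigl(\Fil^0\DR(\mEnd(V,\nabla)_\mathcal I)\bigr) \to \mathbb H^0\bigl(\DR(\mEnd(V,\nabla)_\mathcal I)\bigr) \to \mathbb H^0(\mathscr C) \to \mathbb H^1\bigl(\Fil^0\DR(\mEnd(V,\nabla)_\mathcal I)\bigr) \to \cdots\]
and $E_1$-degeneration makes the last arrow injective into $\mathbb H^1\bigl(\DR(\mEnd(V,\nabla)_\mathcal I)\bigr)$, forcing the preceding map to be surjective. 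Now the source $\mathbb H^0\bigl(\DR(\mEnd(V,\nabla)_\mathcal I)\bigr)$ is exactly the group of infinitesimal automorphisms of $(\w V,\w\nabla)$ over $(V,\nabla)$. Therefore, given two lifts $\w\Fil$ and $\w\Fil'$, the class of their difference in $\mathbb H^0(\mathscr C)$ is realized by some such automorphism $\alpha$, which supplies the required isomorphism $(\w V,\w\nabla,\w\Fil)\to(\w V,\w\nabla,\w\Fil')$.

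The main obstacle is of a bookkeeping nature: one must verify the naturality of the obstruction/torsor classes appearing in Theorems~\ref{deformation_(V,nabla,Fil)} and~\ref{deformation_(Fil)} with respect to the inclusion $\Fil^0\DR \hookrightarrow \DR$, and one must confirm that $E_1$-degeneration of the spectral sequence~(\ref{spectral sequence}) for $(V,\nabla,\Fil)$ passes to the induced spectral sequence for $\mEnd(V,\nabla)$ with its tensor filtration (which is standard since the filtered complex for $\mEnd$ is built from tensor operations on that of $V$, and strictness of the Hodge filtration is preserved by such operations in the setting considered). Once these points are in place, the two assertions follow mechanically from the vanishing and surjectivity statements above.
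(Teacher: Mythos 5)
Your argument is correct and follows essentially the same route as the paper: for part (1) you use injectivity of $\mathbb H^{2}\bigl(\Fil^0\DR(\mEnd(V,\nabla)_\mathcal I)\bigr)\to\mathbb H^{2}\bigl(\DR(\mEnd(V,\nabla)_\mathcal I)\bigr)$ coming from $E_1$-degeneration together with the compatibility of obstruction classes (the paper's Lemma~\ref{torsor map}); for part (2) the paper argues via injectivity in degree $1$ (the image of the filtered $b$-class under $\iota$ is the unfiltered $b$-class, which vanishes), whereas you argue via surjectivity of $\mathbb H^{0}\bigl(\DR(\mEnd(V,\nabla)_\mathcal I)\bigr)\twoheadrightarrow\mathbb H^{0}(\mathscr C)$ and realize the torsor class by an infinitesimal automorphism — two trivially equivalent reformulations of the same long exact sequence argument. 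One small point: the spectral sequence~(\ref{spectral sequence}) in the hypothesis is already stated for $\mEnd(V,\nabla)_\mathcal I$, so the auxiliary step you flag about passing $E_1$-degeneration from $(V,\nabla,\Fil)$ to the endomorphism bundle is unnecessary.
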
      

\begin{acknowledgement*}
We thank Ruiran Sun for many productive discussions that helped formulate many of the results in this note. R.K  gratefully acknowledges support from NSF Grant No. DMS-1344994 (RTG in Algebra, Algebraic Geometry and
Number Theory at the University of Georgia)
\end{acknowledgement*}

\section{Preliminaries}

\subsection{Filtered (logarithmic) de Rham bundles}
\paragraph{\emph{Filtered vector bundles.}}
Let $X$ be a scheme and let $V$ be a vector bundle on $X$. Recall that a \emph{descending filtration} of $V$ is a decreasing sequence of subbundles
\[V \supset \cdots \supset \Fil^{-2} V\supset \Fil^{-1} V\supset \Fil^{0} V\supset \Fil^{1} V\supset \Fil^{2} V\supset\cdots.\] 
The condition that $\Fil^{i}\subset \Fil^{i-1}$ is a subbundle means that it is locally a direct summand. This filtration is called separated and exhaustive if for sufficiently large integer $N>>0$ 
\[\Fil^{-N}V = V \quad \text{and} \quad \Fil^{N+1}V = 0.\]
In this note, all filtrations are assume to be descending, separated and exhaustive. The pair $(V,\Fil)$ is called a \emph{filtered vector bundle over $X$}.

Let $(V,\Fil)$ be a filtered vector bundle with $\Fil^\ell V$ free over $\mathcal O_X$ for each $\ell\in \mathbb Z$. Then a basis $\{e_1,e_2\cdots,e_r\}$ of $V$ is called \emph{adapted} basis of $(V,\Fil)$ if $\Fil^\ell V$ is generated by a subset of $\{e_1,e_2\cdots,e_r\}$ for each $\ell\in \mathbb Z$.

\begin{lem}\label{local freeness_(V,Fil)}
	Let $(V,\Fil)$ be a filtered vector bundle on a scheme $X$. Then there exists an open affine covering $\{U_i\}_{i\in I}$ of $X$ such that $\Fil^\ell V|_{U_i}$ is free $\mathcal O_{U_i}$-module for each $\ell\in \mathbb Z$ and each $i\in I$.	
\end{lem}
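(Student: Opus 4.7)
The statement is local around each point of $X$. Since the filtration is separated and exhaustive, there is an integer $N$ with $\Fil^{-N}V = V$ and $\Fil^{N+1}V = 0$, so only finitely many terms $\Fil^\ell V$ are nontrivial. Moreover, each $\Fil^\ell V$ is by hypothesis locally a direct summand of $V$, hence is a locally free $\mathcal O_X$-module of finite rank. The plan is to produce, around each point $x \in X$, an affine open neighborhood on which all of the finitely many nontrivial $\Fil^\ell V$ are simultaneously free.

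The first step is to construct an adapted basis at the stalk $V_x$. Since $\mathcal O_{X,x}$ is local, each $\Fil^\ell V_x$ is a finitely generated free $\mathcal O_{X,x}$-module, and each inclusion $\Fil^{\ell+1}V_x \subset \Fil^{\ell}V_x$ is split. Starting from a basis of $\Fil^{N}V_x$ and extending outward step by step, one obtains a basis $e_1,\ldots,e_r$ of $V_x$ such that for every $\ell$ some subset is a basis of $\Fil^\ell V_x$. Each $e_i$ lifts to a section of $V$ on some open neighborhood of $x$, and by taking intersections one obtains a single affine open neighborhood $U_0 \ni x$ on which all the $e_i$ are defined as sections of $V$.

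The second step is to apply Nakayama's lemma to the coherent sheaf $V$ on $U_0$ to conclude that $e_1,\ldots,e_r$ form an $\mathcal O_U$-basis of $V$ after shrinking to a suitable distinguished affine open $U \subset U_0$. Applying Nakayama again to each of the finitely many coherent locally free sheaves $\Fil^\ell V$ in turn, and shrinking $U$ at each step, the corresponding subset of $\{e_1,\ldots,e_r\}$ freely generates $\Fil^\ell V|_U$ for every $\ell$. Repeating as $x$ varies produces the affine cover. There is no hard step; the only point worth noting is that shrinking $U$ to achieve freeness of the next piece does not destroy freeness already established for the previous ones, which is immediate because the restriction of a free sheaf from an affine open to a distinguished affine open subset remains free. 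Thus the induction closes after finitely many steps, and the lemma follows.
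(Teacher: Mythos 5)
Your proof is correct and is essentially the paper's argument — the paper simply notes that only finitely many of the $\Fil^\ell V$ are nontrivial and that these finitely many vector bundles can be trivialized on a common affine cover. Your extra work constructing an adapted basis via splittings at the stalk and spreading out by Nakayama proves somewhat more than the lemma states (simultaneous adapted trivialization rather than just freeness of each piece), but it is sound and matches how the paper later uses the lemma.
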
 
\begin{proof}
	There are only finitely many vector bundles $\Fil^{\ell}$, so we may find an open cover that trivializes them all.
\end{proof}

\paragraph{\emph{Dual filtration.}} Let $V$ be a vector bundle on a scheme $X$. The dual vector bundle $V^\vee$ is defined by 
\[V^\vee(U):=\Hom_{\mathcal O_X(U)}(V(U),\mathcal O_X(U)),\quad \text{ for any open subset $U$ of $X$.}\]

Let $(V,\Fil)$ be a filtered vector bundle over a scheme $X$. The \emph{dual filtration} $\Fil^{\vee}$ on the dual vector bundle $V^\vee$ is defined by
\begin{equation}\label{dual_Fil}
\Fil^{\vee,\ell} (V^\vee):= \left(V/\Fil^{1-\ell}V\right)^\vee \subset V^\vee,
\end{equation} 
\paragraph{\emph{Filtration on $\Hom$.}}
Let $(V_1,\Fil_1)$ and $(V_2,\Fil_2)$ be two filtered vector bundle over $X$. There is a natural filtration $\Fil_{(\Fil_1,\Fil_2)}$ on the Hom vector bundle $\mHom(V_1,V_2)=V_1^\vee\otimes V_2$ given by
\begin{equation}\label{Hom_Fil}
\Fil^\ell_{(\Fil_1,\Fil_{2})}\mHom(V_1,V_2)=\sum_{\ell_1}(V_1/ \Fil_1^{\ell_1}V_1)^\vee \otimes\Fil_{2}^{\ell_1+\ell-1} V_2.
\end{equation}
Recall that $\mathrm{Hom}(V_1,V_2):=\Gamma(X,\mHom(V_1,V_2))$. The following lemma sum up some properties about the filtration on Hom vector bundles
\begin{lem}\label{main_lemma} Let  $(V_1,\Fil_1)$, $(V_2,\Fil_2)$ and $(V_3,\Fil_3)$ be three filtered vector bundles over a scheme $X$.
	\begin{itemize}
		\item [(1).] For any $f \in \mathrm{Hom}(V_1,V_2)$. Then
		$f\in \Fil^\ell_{(\Fil_1,\Fil_2)} \mathrm{Hom}(V_1,V_2)$ if and only if 
		$f(\Fil_1^{\ell'}V_1)\subset \Fil_{2}^{\ell'+\ell}V_2$ holds for all $\ell'$. 
		\item [(2).] The composition operator induces a map $\circ: \mathrm{Hom}(V_2,V_3) \times \mathrm{Hom}(V_1,V_2) \rightarrow \mathrm{Hom}(V_1,V_3)$.
		Under this composition map, the image of $\Fil^{\ell_1}_{(\Fil_2,\Fil_3)} \mathrm{Hom}(V_2,V_3) \times  \Fil^{\ell_2}_{(\Fil_1,\Fil_{2})} \mathrm{Hom}(V_1,V_2)$
		is contained in $\Fil^{\ell_1+\ell_2}_{(\Fil_1,\Fil_{3})}\mathrm{Hom}(V_1,V_3)$.
		\item [(3).] Assume $V_1=V_2=:V$ and $\mathrm{rank}(\Fil_1^\ell V)=\mathrm{rank}(\Fil_2^\ell V)$ for all $\ell$. Then
		\[\Fil_1=\Fil_2 \text{ if and only if } \mathrm{id}_{V} \in \Fil^0_{(\Fil_1,\Fil_{2})} \mathrm{End}(V).\]
	\end{itemize}
\end{lem}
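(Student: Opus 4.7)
The plan is to verify each of the three parts by reducing to a local, basis-level calculation via Lemma~\ref{local freeness_(V,Fil)}, which lets me pick an affine open cover on which every $\Fil_i^\ell V_i$ is free and on which $V_1, V_2, V_3$ admit adapted bases.

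For part~(1), I would first do the ``if'' direction on generators. A decomposable element $\phi\otimes v$ with $\phi\in(V_1/\Fil_1^{\ell_1}V_1)^\vee$ and $v\in\Fil_2^{\ell_1+\ell-1}V_2$ corresponds under $V_1^\vee\otimes V_2\simeq\mHom(V_1,V_2)$ to the rank-one map $w\mapsto\phi(w)v$. For $w\in\Fil_1^{\ell'}V_1$, either $\ell'\ge\ell_1$ and $\phi(w)=0$, or $\ell'<\ell_1$ and $\phi(w)v\in\Fil_2^{\ell_1+\ell-1}V_2\subset\Fil_2^{\ell'+\ell}V_2$; closure under sums then yields the full implication. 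For the converse I would pass to an affine open with adapted bases $\{e_j\}$ of $V_1$ and $\{e_i'\}$ of $V_2$, and set $a_j:=\max\{m:e_j\in\Fil_1^m V_1\}$ and $b_i:=\max\{m:e_i'\in\Fil_2^m V_2\}$. Both the membership condition $f\in\Fil^\ell_{(\Fil_1,\Fil_2)}\mHom(V_1,V_2)$ and the pointwise condition $f(\Fil_1^{\ell'}V_1)\subset\Fil_2^{\ell'+\ell}V_2$ for all $\ell'$ translate into the same constraint on the matrix $(f_{ij})$, namely $f_{ij}=0$ whenever $b_i<a_j+\ell$ (on the membership side by unwinding the sum over $\ell_1$, on the pointwise side by taking $\ell'=a_j$ for each $j$).

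Part~(2) then chains directly through (1): if $f(\Fil_1^{\ell'}V_1)\subset\Fil_2^{\ell'+\ell_2}V_2$ and $g(\Fil_2^{m}V_2)\subset\Fil_3^{m+\ell_1}V_3$ for all $\ell',m$, then $(g\circ f)(\Fil_1^{\ell'}V_1)\subset\Fil_3^{\ell'+\ell_1+\ell_2}V_3$, and (1) delivers membership of $g\circ f$ in $\Fil^{\ell_1+\ell_2}_{(\Fil_1,\Fil_3)}\mathrm{Hom}(V_1,V_3)$.

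For part~(3), (1) shows that $\mathrm{id}_V\in\Fil^0_{(\Fil_1,\Fil_2)}\End(V)$ is equivalent to $\Fil_1^{\ell'}V\subset\Fil_2^{\ell'}V$ for every $\ell'$. The equal-rank hypothesis then upgrades each inclusion to an equality: the surjection $V/\Fil_1^{\ell'}V\twoheadrightarrow V/\Fil_2^{\ell'}V$ of locally free sheaves has kernel $\Fil_2^{\ell'}V/\Fil_1^{\ell'}V$, which is locally free of rank zero and therefore zero. The only real work sits in the ``only if'' direction of (1): the filtration on $\mHom$ is defined as a sum over $\ell_1$ rather than a single summand, and knowing only that $f$ shifts the filtrations by $\ell$ does not on its face express $f$ as a sum of standard generators. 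The reduction to an adapted basis via Lemma~\ref{local freeness_(V,Fil)} is exactly what confirms that the combinatorial bound $b_i\ge a_j+\ell$ is the only constraint, and this is where I would focus the bulk of the argument.
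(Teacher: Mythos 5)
Your proof is correct and follows essentially the same route as the paper: part (1) by a local check with adapted bases (your matrix criterion $f_{ij}=0$ unless $b_i\ge a_j+\ell$ is exactly the intended computation), part (2) as a formal consequence of (1), and part (3) from the fact that nested subbundles of equal rank coincide. The only quibble is cosmetic: you swap the labels ``if'' and ``only if'' relative to the statement, but both directions are in fact argued.
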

\begin{proof} The first statement can be easily checked locally by choosing adapted local basis of $V^\vee_1$ and $V_2$.
	The second statement follows the first one. For the third one, it follows the fact that if $V'' \subseteq V'\subseteq V$ are sub bundles with $\mathrm{rank} V'=\mathrm{rank} V''$, then $V''=V'$.
\end{proof}

\paragraph{\emph{de Rham bundle.}}
Let $X$ be an $S$-scheme with relative flat normal crossing divisor $D$. A (logarithmic) connection on a vector bundle $V$ over $(X,D)/S$ is an $\mathcal O_S$-linear map $\nabla\colon V\rightarrow V\otimes_{\mathcal O_X} \Omega_{X/S}^1(\log D)$ satisfying the Leibniz rule $\nabla(rv) = v\otimes \mathrm{d}r + r\nabla(v)$ for any local section $r\in \mathcal O_X$ and $v\in V$. Give a connection, there are canonical maps 
\[\nabla\colon V\otimes\Omega_{X/S}^i(\log D) \rightarrow V\otimes\Omega_{X/S}^i(\log D)\]
given by $s\otimes \omega \mapsto \nabla(s)\wedge \omega +s \otimes \mathrm{d}\omega$.
The curvature $\nabla\circ \nabla$ is $\mathcal O_X$-linear and contained in $\mEnd(V)\otimes_{\mathcal O_X} \Omega^2_{X/S}(\log D)$. The connection is called \emph{integrable} and $(V,\nabla)$ is called a \emph{(logarithmic) de Rham bundle} if the curvature vanishes. For a de Rham bundle, one has a natural \emph{de Rham complex}:
\[\DR(V,\nabla):\qquad 0\rightarrow  V 
\xrightarrow{\nabla} V\otimes\Omega^1_{X/S}(\log D)
\xrightarrow{\nabla} V\otimes\Omega^2_{X/S}(\log D)
\xrightarrow{\nabla} V\otimes\Omega^3_{X/S}(\log D) \rightarrow \cdots \]
The hypercohomology of $\DR(V,\nabla)$ are called the \emph{de Rham cohomology} of the de Rham bundle $(V,\nabla)$ and are denoted as follows
\[H^i_{dR}(X,(V,\nabla)) := \mathbb H^i(X,\DR(V,\nabla)).\]

\paragraph{\emph{Connection on Hom vector bundles.}}
Let $(V_1,\nabla_1)$ and $(V_2,\nabla_2)$ be two vector bundles with a connections over $X/S$. Then there is a natural connection $\nabla_{(\nabla_1,\nabla_2)}$ on the vector bundle $\mHom_{\mathcal O_X}(V_1,V_2)$ given by
\begin{equation}\label{eqn:hom_connection}\nabla_{(\nabla_1,\nabla_2)}(f) := \nabla_2\circ f-f\circ \nabla_1
\end{equation}
for any local section $f\in \mHom(V_1,V_2)$. Let's note that 
\begin{itemize}
	\item for any $f\in \mHom(V_1,V_2)$, $\nabla_{(\nabla_1,\nabla_2)}(f)=0$ if and only if $f\in \mHom((V_1,\nabla_1),(V_2,\nabla_2))$, i.e. $f$ is parallel;
	\item  for any $\omega\in \mHom(V_1,V_2)\otimes\Omega^i_{X/S}(\log D)$,
	\[\nabla_{(\nabla_1,\nabla_2)}(\omega) = \nabla_2\circ \omega +(-1)^{i+1} \omega \circ \nabla_1.\]
\end{itemize}
\begin{lem}\label{Hom_(V,nabla)} Let $(V_1,\nabla_1)$ and $(V_2,\nabla_2)$ be two de Rham bundles over $X/S$. Then  
	\[\mHom((V_1,\nabla_1),(V_2,\nabla_2)) := \left(\mHom(V_1,V_2),\nabla_{(\nabla_1,\nabla_2)}\right)\]
	is also a de Rham bundle. In particular, $\nabla^{\End}_{\nabla_1}:=\nabla_{(\nabla_1,\nabla_1)}$ is an flat connection on $\mEnd(V_1)$.
\end{lem}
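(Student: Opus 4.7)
The plan is to verify the two defining properties of a (logarithmic) de Rham bundle for $\bigl(\mHom(V_1,V_2),\nabla_{(\nabla_1,\nabla_2)}\bigr)$: first that $\nabla_{(\nabla_1,\nabla_2)}$ is indeed an $\mathcal O_S$-linear connection (i.e.\ satisfies the Leibniz rule), and second that its curvature vanishes.

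For the Leibniz rule I would carry out a local computation. Let $r\in \mathcal O_X$ and $f\in \mHom(V_1,V_2)$ be local sections. Evaluating $\nabla_{(\nabla_1,\nabla_2)}(rf)$ on a local section $v\in V_1$, the Leibniz rule for $\nabla_2$ gives $\nabla_2(rf(v))=f(v)\otimes dr+r\nabla_2(f(v))$, while $(rf)\circ \nabla_1=r(f\circ \nabla_1)$ by $\mathcal O_X$-linearity of $f$. Subtracting shows $\nabla_{(\nabla_1,\nabla_2)}(rf)=f\otimes dr+r\,\nabla_{(\nabla_1,\nabla_2)}(f)$, which is precisely the Leibniz rule. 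The $\mathcal O_S$-linearity is immediate from the $\mathcal O_S$-linearity of $\nabla_1$ and $\nabla_2$.

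For flatness I would use the sign convention displayed right above the lemma, which extends $\nabla_{(\nabla_1,\nabla_2)}$ to $\mHom(V_1,V_2)\otimes \Omega^1_{X/S}(\log D)$ by $\nabla_{(\nabla_1,\nabla_2)}(\omega)=\nabla_2\circ \omega+\omega\circ \nabla_1$ when $i=1$. Composing and expanding for $f\in \mHom(V_1,V_2)$ gives
\[\nabla_{(\nabla_1,\nabla_2)}^2(f)=\nabla_2\circ\nabla_2\circ f-\nabla_2\circ f\circ\nabla_1+\nabla_2\circ f\circ\nabla_1-f\circ\nabla_1\circ\nabla_1,\]
in which the two mixed terms cancel, leaving $\nabla_2^2\circ f-f\circ\nabla_1^2$. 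Since $(V_1,\nabla_1)$ and $(V_2,\nabla_2)$ are de Rham bundles, both $\nabla_1^2$ and $\nabla_2^2$ vanish, so the curvature of $\nabla_{(\nabla_1,\nabla_2)}$ is zero.

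There is no genuine obstacle: the proof is a direct bookkeeping exercise, and the only subtle point is that the sign convention in the extension to $\Omega^\bullet$-valued sections is precisely what makes the two mixed terms cancel. The final endomorphism statement is then just the specialization $V_1=V_2$, $\nabla_1=\nabla_2$.
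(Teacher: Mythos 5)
Your proof is correct and follows essentially the same route as the paper: the paper's entire argument is the identity $\bigl(\nabla_{(\nabla_1,\nabla_2)}\bigr)^2(f)=\nabla_2^2\circ f-f\circ\nabla_1^2$, which is exactly your curvature computation (with the correct sign convention making the mixed terms cancel). Your additional verification of the Leibniz rule is a harmless elaboration of a point the paper leaves implicit.
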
 
\begin{proof}
Note that $\left(\nabla_{(\nabla_1,\nabla_2)}\right)^2(f) =(\nabla_2^2)\circ f - f\circ (\nabla_1^2)$. The lemma follows.
\end{proof} 
\paragraph{\emph{Filtered de Rham bundle.}} 
Let $X$ be an $S$-scheme with relative flat normal crossing divisor $D$. Let $(V,\nabla,\Fil)$ be a triple such that $(V,\Fil)$ is a filtered vector bundle and $(V,\nabla)$ be a de Rham bundle over $(X,D)/S$. Then the triple $(V,\nabla,\Fil)$ is called \emph{filtered de Rham bundle} if $\nabla$ and $\Fil$ satisfy the Griffith transversality. i.e. 
\[\nabla(\Fil^\ell V) \subset \Fil^{\ell -1}V\otimes \Omega_{X/S}^1(\log D).\]
Thus we have sub complexes of $\DR(V,\nabla)$
\[\Fil^\ell \DR(V,\nabla): 
\qquad 0
\rightarrow           \Fil^\ell V 
\xrightarrow{\nabla}  \Fil^{\ell-1} V \otimes \Omega^1_{X/S}(\log D)
\xrightarrow{\nabla}  \Fil^{\ell-2} V \otimes \Omega^2_{X/S}(\log D)
\xrightarrow{\nabla}  \cdots \]
These sub complexes define a filtration on $\DR(V,\nabla)$
\begin{equation}
\cdots 
\subset \Fil^{\ell +1} \DR(V,\nabla)
\subset \Fil^{\ell   } \DR(V,\nabla)
\subset \Fil^{\ell -1} \DR(V,\nabla)
\subset \cdots \subset \DR(V,\nabla)
\end{equation}
\begin{lem} \label{Hom_(V,nabla,Fil)}
	Let $(V_1,\nabla_1,\Fil_1)$ and $(V_2,\nabla_2,\Fil_2)$ be two filtered de Rham bundles over $(X,D)/S$. Then \[\mHom((V_1,\nabla_1,\Fil_1),(V_2,\nabla_2,\Fil_2)) := \left(\mHom(V_1,V_2),\nabla_{(\nabla_1,\nabla_2)},\Fil_{(\Fil_1,\Fil_2)}\right)\]
is also a filtered de Rham bundle.
\end{lem}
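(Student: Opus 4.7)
The plan is to verify two conditions: that the Hom bundle with the connection $\nabla_{(\nabla_1,\nabla_2)}$ is a de Rham bundle, and that the filtration $\Fil_{(\Fil_1,\Fil_2)}$ satisfies Griffiths transversality with respect to this connection. The first condition is already supplied by Lemma~\ref{Hom_(V,nabla)}, so the real content is the second.

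For the transversality, I would reduce the statement to a pointwise check using Lemma~\ref{main_lemma}(1). Fix an integer $\ell$ and a local section $f\in \Fil^\ell_{(\Fil_1,\Fil_2)}\mHom(V_1,V_2)$; by that lemma, this is characterized by $f(\Fil_1^{\ell'}V_1)\subset \Fil_2^{\ell'+\ell}V_2$ for every $\ell'$. To prove that $\nabla_{(\nabla_1,\nabla_2)}(f) \in \Fil^{\ell-1}_{(\Fil_1,\Fil_2)} \mHom(V_1,V_2)\otimes \Omega^1_{X/S}(\log D)$, I would record the mild extension of Lemma~\ref{main_lemma}(1) to $\Omega^1$-valued Hom sections, which holds because tensoring with the locally free sheaf $\Omega^1_{X/S}(\log D)$ preserves subbundle inclusions. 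It then suffices to check, for every local section $v\in \Fil_1^{\ell'}V_1$, that
\[
\bigl(\nabla_{(\nabla_1,\nabla_2)}(f)\bigr)(v) \;\in\; \Fil_2^{\ell'+\ell-1}V_2 \otimes \Omega^1_{X/S}(\log D).
\]

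Evaluating via the defining formula \eqref{eqn:hom_connection}, this expression equals $\nabla_2(f(v)) - (f\otimes \mathrm{id})(\nabla_1(v))$. Since $f(v)\in \Fil_2^{\ell'+\ell}V_2$, Griffiths transversality for $(V_2,\nabla_2,\Fil_2)$ gives $\nabla_2(f(v))\in \Fil_2^{\ell'+\ell-1}V_2\otimes \Omega^1_{X/S}(\log D)$. Simultaneously, Griffiths transversality for $(V_1,\nabla_1,\Fil_1)$ yields $\nabla_1(v)\in \Fil_1^{\ell'-1}V_1\otimes \Omega^1_{X/S}(\log D)$, and then applying $f$ gives a section of $\Fil_2^{(\ell'-1)+\ell}V_2\otimes \Omega^1_{X/S}(\log D)= \Fil_2^{\ell'+\ell-1}V_2\otimes \Omega^1_{X/S}(\log D)$. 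The two terms therefore lie in the same subbundle, so does their difference, which is exactly what is needed.

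I do not anticipate any serious obstacle here; the statement is essentially a bookkeeping exercise in which the Griffiths transversality for both $(V_i,\nabla_i,\Fil_i)$ combines in the only reasonable way to produce transversality for the Hom filtration. The one subtle point to make explicit is the $\Omega^1$-valued variant of Lemma~\ref{main_lemma}(1), which justifies reducing the global transversality statement to the elementary computation performed above on local sections of the filtered subsheaves.
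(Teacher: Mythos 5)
Your proof is correct and follows essentially the same route as the paper: the paper disposes of the transversality check by citing the composition statement, Lemma~\ref{main_lemma}(2), applied to $\nabla^{\mathrm{Hom}}(f)=\nabla_2\circ f-f\circ\nabla_1$, while you unwind this to the evaluation criterion of Lemma~\ref{main_lemma}(1) (with its harmless $\Omega^1$-valued variant), which is exactly what underlies part (2). The extra care you take in spelling out the local computation is fine but not a different argument.
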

\begin{proof}
	We only need to check that the connection and the filtration satisfy Griffith transversality. This follows ii) of Lemma~\ref{main_lemma}.
\end{proof}
\begin{rmk}
	Let $(V,\nabla,\Fil)$ be a filtered de Rham bundle. By abuse of notation, we often write $\nabla_{(\nabla,\nabla)}$ as $\nabla^\End$ and $\Fil_{(\Fil,\Fil)}$ as $\Fil$ for short.
\end{rmk}

\begin{lem}\label{diff_conn} Let $(V,\Fil)$ be a filtered vector bundle over $X$. Suppose $\nabla$ and $\nabla'$ are two connections on $V$ over $(X,D)/S$ with $\nabla-\nabla'\in \Fil^{-1}_{(\Fil,\Fil)} \Hom(V,V\otimes \Omega^1_{X/S}(\log D))$. Then $\nabla$ satisfies Griffith's transversality if and only if $\nabla'$ does. In other words, $(V,\nabla,\Fil)$ is a filtered de Rham bundle if and only if $(V,\nabla',\Fil)$ is. 
\end{lem}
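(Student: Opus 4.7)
The proof is a short computation. The plan is to observe that $\nabla-\nabla'$ is $\mathcal O_X$-linear (the Leibniz rule terms cancel), so it defines a genuine morphism $V\to V\otimes\Omega^1_{X/S}(\log D)$ of $\mathcal O_X$-modules. Therefore the hypothesis $\nabla-\nabla'\in \Fil^{-1}_{(\Fil,\Fil)}\Hom(V,V\otimes\Omega^1_{X/S}(\log D))$ is meaningful, where the target carries the filtration $\Fil^\ell(V\otimes\Omega^1_{X/S}(\log D)):=\Fil^\ell V\otimes\Omega^1_{X/S}(\log D)$ induced from the filtration on $V$.

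Next, I would apply part (1) of Lemma~\ref{main_lemma} to translate the membership $\nabla-\nabla'\in \Fil^{-1}_{(\Fil,\Fil)}\Hom(V,V\otimes\Omega^1_{X/S}(\log D))$ into the pointwise statement
\[(\nabla-\nabla')(\Fil^\ell V)\subset \Fil^{\ell-1}V\otimes\Omega^1_{X/S}(\log D) \qquad \text{for all } \ell\in\mathbb Z.\]
Then the identity $\nabla(\Fil^\ell V)=\nabla'(\Fil^\ell V)+(\nabla-\nabla')(\Fil^\ell V)$ immediately gives the equivalence: if $\nabla'$ satisfies Griffiths transversality then both summands land in $\Fil^{\ell-1}V\otimes\Omega^1_{X/S}(\log D)$, so $\nabla$ does too; the converse is symmetric (the hypothesis on $\nabla-\nabla'$ is symmetric in $\nabla$ and $\nabla'$).

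There is essentially no obstacle beyond bookkeeping; the only point that needs care is verifying $\mathcal O_X$-linearity of $\nabla-\nabla'$ and confirming that the definition (\ref{Hom_Fil}) of the induced filtration on $\mHom(V,V\otimes\Omega^1_{X/S}(\log D))$ is compatible with the convention that $\Omega^1_{X/S}(\log D)$ carries the trivial filtration. Both are immediate from the definitions in Section~2.
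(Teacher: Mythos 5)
Your proposal is correct and follows essentially the same route as the paper: translate the hypothesis via part (1) of Lemma~\ref{main_lemma} into $(\nabla-\nabla')(\Fil^\ell V)\subset \Fil^{\ell-1}V\otimes\Omega^1_{X/S}(\log D)$ and conclude by adding/subtracting, with the converse by symmetry. Your preliminary remarks on the $\mathcal O_X$-linearity of $\nabla-\nabla'$ and the trivial filtration on $\Omega^1_{X/S}(\log D)$ are harmless bookkeeping that the paper leaves implicit.
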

\begin{proof} We only need to show that if $\nabla$ satisfies Griffith's transversality then $\nabla'$ also does. Let $v$ be any local section in $\Fil^\ell V$. By the Griffith's transversality, $\nabla(v)\in \Fil^{\ell-1}V\otimes \Omega^1_{X/S}(\log D)$. By Lemma~\ref{main_lemma}, $\nabla-\nabla'\in \Fil^{-1}_{(\Fil,\Fil)} \Hom(V,V\otimes \Omega^1_{X/S}(\log D))$ implies $\nabla(v)-\nabla'(v) \in \Fil^{\ell-1}V\otimes \Omega^1_{X/S}(\log D)$. Thus 
	\[\nabla'(v) = \nabla(v)-\Big(\nabla(v)-\nabla'(v)\Big) \in \Fil^{\ell-1}V\otimes \Omega^1_{X/S}(\log D).\] 
	By the arbitrary choice of the local section $v$, $\nabla'$ satisfies Griffith's transversality.
\end{proof}

\subsection{Graded (logarithmic) Higgs bundles}

\paragraph{\emph{Graded vector bundles.}}
Let $X$ be a scheme. Let $E$ be a vector bundle on $X$. Let $\{\mathrm{Gr}^\ell E\}_{\ell\in \mathbb Z}$ be subbundles of $V$. The pair $(E,\mathrm{Gr})$ is called \emph{graded vector bundle} over $X$ if the natural map $ \oplus_{\ell\in \mathbb Z} \mathrm{Gr}^\ell E\cong E$ is an isomorphism. 
\begin{lem}
For any graded vector bundle $(E,\mathrm{Gr})$ on $X$ there exists an open affine covering $\{U_i\}_{i\in I}$ of $X$ such that $\mathrm{Gr}^\ell\mid_{U_i}$ is a finite free $\mathcal O_{U_i}$-module for each $i\in I$ and $\ell\in \mathbb Z$.	
\end{lem}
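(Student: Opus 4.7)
The plan is to mimic the proof of Lemma~\ref{local freeness_(V,Fil)} (the analogous fact for filtered vector bundles), which is a one-liner once one observes that only finitely many graded pieces are nonzero.

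First I would verify that each $\mathrm{Gr}^\ell E$ is itself a vector bundle, i.e., locally free of finite rank. This follows from the hypothesis that the $\mathrm{Gr}^\ell E$ are sub\emph{bundles} of $E$: by definition of ``subbundle'' in this paper (locally a direct summand; cf.\ the convention adopted in the discussion of filtered vector bundles), each $\mathrm{Gr}^\ell E$ is a direct summand of a vector bundle and hence itself locally free of finite rank.

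Next I would observe that only finitely many of the $\mathrm{Gr}^\ell E$ are nonzero. Indeed, $E = \bigoplus_{\ell \in \mathbb Z} \mathrm{Gr}^\ell E$ is a vector bundle, hence has finite rank locally, and since each $\mathrm{Gr}^\ell E$ is a direct summand of $E$, the set $\{\ell \in \mathbb{Z} : \mathrm{Gr}^\ell E \neq 0\}$ is finite (one may argue, for instance, that $\bigoplus_\ell \mathrm{rank}(\mathrm{Gr}^\ell E) = \mathrm{rank}(E) < \infty$ on any connected component, and that $X$ has only locally finitely many components to worry about in the neighborhood of a given point).

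Finally, given any point $x \in X$, I would choose an affine open $U \ni x$ and, by shrinking $U$ to a distinguished affine open (which form a basis of the topology on $U$), simultaneously trivialize the finitely many nonzero $\mathrm{Gr}^\ell E|_U$. Taking the collection of such $U$ as $x$ varies produces the desired affine cover. The argument is entirely parallel to Lemma~\ref{local freeness_(V,Fil)}; there is no real obstacle, the only subtle point being the initial verification that ``subbundle'' in the hypothesis is meant in the sense of a local direct summand (hence locally free), which is standard in this context.
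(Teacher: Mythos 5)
Your argument is correct and follows the same route as the paper: note that only finitely many graded pieces are nonzero (since they are direct summands of the finite-rank bundle $E$) and then choose an affine cover trivializing them all simultaneously. The extra verifications you include (local freeness of each $\mathrm{Gr}^\ell E$ as a direct summand, finiteness of the nonzero indices) are sound and merely make explicit what the paper's one-line proof leaves implicit.
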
 
\begin{proof}
	There are only finitely many non-zero $\mathrm{Gr}^{\ell}$, so we may choose an open affine covering that trivializes them all.
\end{proof}

Let $(E,\Gr)$ be a graded vector bundle over a scheme $X$. There is a natural grading structure $\mathrm{Gr}^{\vee}$ on the dual vector bundle $E^\vee$ given by 
\begin{equation}\label{dual_Gr}
\mathrm{Gr}^{\vee \ell}(E^\vee) = (\mathrm{Gr}^{-\ell} E)^\vee.
\end{equation}
For two graded vector bundles $(E_1,\mathrm{Gr}_1)$ and $(E_2,\mathrm{Gr}_2)$ on $X$, there is a natural grading structure $\Gr_{(\Gr_1,\Gr_2)}$ on the Hom vector bundle $\mHom(E_1,E_2)=E_1^\vee\otimes E_2$ given by 
\begin{equation}\displaystyle \label{Hom_Gr}
\mathrm{Gr}^\ell_{(\mathrm{Gr}_1,\mathrm{Gr}_2)} \mHom(E_1,E_2) = \bigoplus_{\ell_1\in \mathbb{Z}} (\mathrm{Gr}^{\ell_1}V_1)^\vee \otimes_R \mathrm{Gr}^{\ell + \ell_1}V_2.
\end{equation}

\paragraph{\emph{Higgs bundles.}}
Let $X$ be a smooth $S$-scheme with relative flat normal crossing divisor $D$. Let $E$ be vector bundle over $X$. Let $\theta\colon E\rightarrow E\otimes_{\mathcal O_X} \Omega^1_{X/S}(\log D)$ be an $\mathcal O_X$-linear morphism. The pair $(E,\theta)$ is called \emph{(logarithmic) Higgs bundle} over $(X,D)/S$ if $\theta$ is integrable. i.e. $\theta\wedge\theta = 0$. For a Higgs bundle, one has a natural \emph{Higgs complex}
\[\DR(E,\theta):\qquad 0\rightarrow  E 
\xrightarrow{\theta} E\otimes\Omega^1_{X/S}(\log D)
\xrightarrow{\theta} E\otimes\Omega^2_{X/S}(\log D)
\xrightarrow{\theta} E\otimes\Omega^3_{X/S}(\log D) \rightarrow \cdots \]
The hypercohomology group of $\DR(E,\theta)$ is called \emph{Higgs cohomology} of the Higgs bundle $(E,\theta)$, which is denoted by
\[H^i_{Higgs}(X,(E,\theta)) := \mathbb H^i(X,\DR(E,\theta)).\]

\paragraph{\emph{Graded Higgs bundles.}}
Let $X$ be an $S$-scheme with relative flat normal crossing divisor $D$. A \emph{graded (logarithmic) Higgs bundle} over $(X,D)/S$ is a Higgs bundle $(E,\theta)$ together with a grading structure $\mathrm{Gr}$ on $E$ satisfying
\[\theta(\mathrm{Gr}^\ell E) \subset \mathrm{Gr}^{\ell -1}E\otimes_{\mathcal O_X} \Omega^1_{X/S}(\log D).\]
Thus we have subcomplexes of $\DR(E,\theta)$
\[\Gr^\ell \DR(E,\theta): 
\qquad 0
\rightarrow           \Gr^\ell E
\xrightarrow{\theta}  \Gr^{\ell-1} E \otimes \Omega^1_{X/S}(\log D)
\xrightarrow{\theta}  \Gr^{\ell-2} E \otimes \Omega^2_{X/S}(\log D)
\xrightarrow{\theta}  \cdots \]
These subcomplexes define a graded structure on the complex $\DR(E,\theta)$
\begin{equation}
\DR(E,\theta) = \bigoplus_{\ell\in \mathbb Z} \Gr^{\ell} \DR(E,\theta).
\end{equation}

The following is the main example we will be concerned with.
\begin{exa} \label{exa1}
Let $(V,\nabla,\Fil)$ be a filtered de Rham bundle over $V$. Denote $E =\bigoplus_{\ell\in \mathbb Z} \Fil^\ell V/\Fil^{\ell+1}V$ and $\Gr^\ell E = \Fil^\ell V/\Fil^{\ell+1}V$. By Griffith's transversality, the connection induces an $\mathcal O_X$-linear map $\theta \colon \Gr^\ell E \rightarrow \Gr^{\ell-1} E  \otimes_{\mathcal O_X} \Omega^1_{X/S}(\log D)$ for each $\ell\in \mathbb Z$. Then $(E,\theta,\Gr)$ is a graded Higgs bundle.	Moreover we have 
\[\Gr^{\ell} \DR(E,\theta) = \Fil^\ell \DR(V,\nabla) / \Fil^{\ell+1} \DR(V,\nabla).\]
\end{exa} 

\subsection{Cech resolution}

\paragraph{\emph{Double complex.}} Recall that a double complex is an array $M^{pq}$ together with morphisms 
\begin{equation*}
\left\{\begin{split}
& \mathrm{d} \colon   M^{pq} \rightarrow M^{p  ,q+1}\\
& \mathrm{d'}\colon  M^{pq} \rightarrow M^{p+1,q  }\\
\end{split} \right.
\end{equation*}
satisfying $\mathrm{d}\circ\mathrm{d'} + \mathrm{d'}\circ \mathrm{d} =0$.
\begin{equation}
\xymatrix{
& \vdots \ar[d]^{\mathrm{d}'} 
& \vdots \ar[d]^{\mathrm{d}'}
& 
\\
\cdots       \ar[r]^{\mathrm{d}}
& M^{p,q}    \ar[r]^{\mathrm{d}}  \ar[d]^{\mathrm{d}'}  
& M^{p,q+1}  \ar[r]^{\mathrm{d}}  \ar[d]^{\mathrm{d}'} 
&\cdots \\
\cdots       \ar[r]^{\mathrm{d}}
& M^{p+1,q}  \ar[r]^{\mathrm{d}}  \ar[d]^{\mathrm{d}'} 
& M^{p+1,q+1}\ar[r]^{\mathrm{d}}  \ar[d]^{\mathrm{d}'} 
& \cdots \\
& \vdots & \vdots & \\
}
\end{equation}
The \emph{total complex} is 
\begin{equation}
\cdots 
\xrightarrow{\mathrm{d}^{\mathrm{Tot}}} \bigoplus_{p+q = n  } M^{p,q} \xrightarrow{\mathrm{d}^{\mathrm{Tot}}} \bigoplus_{p+q = n+1} M^{p,q}  \xrightarrow{\mathrm{d}^{\mathrm{Tot}}} \bigoplus_{p+q = n+2} M^{p,q} 
\xrightarrow{\mathrm{d}^{\mathrm{Tot}}} \cdots
\end{equation}
where $\mathrm{d}^{\mathrm{Tot}} = \mathrm{d} +\mathrm{d}'$. i.e. $\mathrm{d}^{\mathrm{Tot}}((m^{p,q})_{p+q=n}) := (\mathrm{d}(m^{p,q-1}) + \mathrm{d}'(m^{p-1,q}))_{p+q=n+1}$.

One example of a double complex comes from the \emph{Cech complex} attached to a complex of sheaves and an open covering. Let $X$ be a topological space. Let 
\[(\mathcal F^\bullet,\nabla): \qquad  0
\rightarrow \mathcal F^0 
\xrightarrow{\nabla} \mathcal F^1
\xrightarrow{\nabla} \mathcal F^2 
\xrightarrow{\nabla} \mathcal F^3 
\xrightarrow{\nabla} \mathcal F^4 
\xrightarrow{\nabla} \cdots \]
be a complex of sheaves of abelian groups on $X$. Let $\mathcal U = \{U_i\}_{i\in I}$ be an open covering of $X$. For any $\iota=(i_1,\cdots,i_s)\in I^s$, denote $U_{\iota} = \cap_{\ell =1}^s U_{i_\ell}$ and 
\[C^p(\mathcal U,\mathcal F^q) = \prod_{\iota\in I^{p+1}} \mathcal F^q(U_{\iota}).\]

Then the \emph{Cech resolution} of the complex $(\mathcal F^\bullet,\nabla)$ with respect to the open covering $\mathcal U$ of $X$ is the total complex of the following double complex
 \begin{equation}
 \xymatrix{
 	C^0(\mathcal U,\mathcal F^0)
 	\ar[d]^{\delta} \ar[r]^{\nabla} 
 	&  C^0(\mathcal U,\mathcal F^1)
 	\ar[d]^{\delta} \ar[r]^{\nabla} 
 	&  C^0(\mathcal U,\mathcal F^2)
 	\ar[d]^{\delta} \ar[r]^{\nabla} 
 	&  C^0(\mathcal U,\mathcal F^3)
 	\ar[d]^{\delta} \ar[r]^{\nabla} 
 	&  \cdots  \\
 	C^1(\mathcal U,\mathcal F^0)
 	\ar[d]^{\delta} \ar[r]^{-\nabla} 
 	&  C^1(\mathcal U,\mathcal F^1)
 	\ar[d]^{\delta} \ar[r]^{-\nabla} 
 	&  C^1(\mathcal U,\mathcal F^2)
 	\ar[d]^{\delta} \ar[r]^{-\nabla} 
 	&  C^1(\mathcal U,\mathcal F^3)
 	\ar[d]^{\delta} \ar[r]^{-\nabla} 
 	&  \cdots  \\
 	   C^2(\mathcal U,\mathcal F^0) 
 	\ar[d]^{\delta} \ar[r]^{\nabla} 
 	&  C^2(\mathcal U,\mathcal F^1)
 	\ar[d]^{\delta} \ar[r]^{\nabla} 
 	&  C^2(\mathcal U,\mathcal F^2)
 	\ar[d]^{\delta} \ar[r]^{\nabla} 
 	&  C^2(\mathcal U,\mathcal F^3)
 	\ar[d]^{\delta} \ar[r]^{\nabla} 
 	&  \cdots  \\
 	C^3(\mathcal U,\mathcal F^0)
 	\ar[d]^{\delta} \ar[r]^{-\nabla} 
 	&  C^3(\mathcal U,\mathcal F^1)
 	\ar[d]^{\delta} \ar[r]^{-\nabla} 
 	&  C^3(\mathcal U,\mathcal F^2)
 	\ar[d]^{\delta} \ar[r]^{-\nabla} 
 	&  C^3(\mathcal U,\mathcal F^3)
 	\ar[d]^{\delta} \ar[r]^{-\nabla} 
 	&  \cdots  \\
 	\vdots &  \vdots & \vdots & \vdots &  \\
 }
 \end{equation}
 where $\delta$ is defined by restrictions.
 \[\delta((s_{\iota})_{\iota\in I^m}) = \left(\sum_{t=0}^{p+1}(-1)^t s_{(i_0,\cdots,\hat{i}_t,\cdots,i_{p+1})}\mid_{U_{(i_0,i_1,\cdots,i_{p+1})}}\right)_{(i_0,i_1,\cdots,i_{p+1})\in I^{m+1}}\]
 \begin{lem}\label{cocycle & coboundary}
 \begin{itemize}
 	\item[(1).]  An element $(-a,b,c)$ in $C^2(\mathcal U,\mathcal F^0)\times C^1(\mathcal U,\mathcal F^1) \times C^0(\mathcal U,\mathcal F^2)$ is a $2$-cocycle if and only if 
 	\[0 = \delta(a), \nabla(a)=\delta(b), \nabla(b)=\delta(c) \text{ and } \nabla(c)=0;\]
 	\item[(2).] an element $(a,b)$ in $C^1(\mathcal U,\mathcal F^0)\times C^0(\mathcal U,\mathcal F^1)$ $1$-cocycle if and only if 
 	\[0 = \delta(a), \nabla(a)=\delta(b) \text{ and } \nabla(b)=0.\]
 \end{itemize}   
 \end{lem}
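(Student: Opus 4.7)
The plan is to unwind the total differential on the Čech double complex directly. From the alternating signs in the diagram above, the horizontal differential on row $p$ (i.e.\ on $C^p(\mathcal U, \mathcal F^{\bullet})$) is $(-1)^p \nabla$, the standard sign convention forcing $\delta$ and the modified $\nabla$ to anticommute so that $(d^{\mathrm{Tot}})^2 = 0$. Consequently, on a summand $C^p(\mathcal U, \mathcal F^q)$ of total degree $p+q$, the total differential is
\[d^{\mathrm{Tot}} \;=\; \delta \;+\; (-1)^p \nabla,\]
with the two pieces landing respectively in bidegrees $(p+1,q)$ and $(p,q+1)$.

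For part (1), I apply $d^{\mathrm{Tot}}$ to each term of $(-a, b, c)$ and sort the resulting pieces into the four bidegrees $(3,0)$, $(2,1)$, $(1,2)$, $(0,3)$ making up total degree $3$. The summand $-a$ contributes $-\delta(a)$ in bidegree $(3,0)$ and $-\nabla(a)$ in $(2,1)$; the summand $b$ contributes $\delta(b)$ in $(2,1)$ and $-\nabla(b)$ in $(1,2)$; and the summand $c$ contributes $\delta(c)$ in $(1,2)$ and $\nabla(c)$ in $(0,3)$. Requiring the sum in each bidegree to vanish yields exactly the four equations $\delta(a)=0$, $\nabla(a)=\delta(b)$, $\nabla(b)=\delta(c)$, and $\nabla(c)=0$. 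The minus sign in front of $a$ in the tuple is the notational device that absorbs the $(-1)^2=+1$ row sign on row $p=2$ so that the final equations appear unsigned. Part (2) is the same computation one total degree lower: $d^{\mathrm{Tot}}(a,b)$ decomposes as $\delta(a)$ in bidegree $(2,0)$, $-\nabla(a)+\delta(b)$ in bidegree $(1,1)$, and $\nabla(b)$ in bidegree $(0,2)$; vanishing of each component yields the three listed equations.

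There is no real conceptual obstacle here — the entire argument is bookkeeping at the level of bidegrees. The only point requiring care is the alternating-sign convention on horizontal differentials, and it is precisely this convention that dictates writing the first coordinate as $-a$ rather than $a$. Once the signs are tracked, the equivalence between the total-complex cocycle condition and the four (resp.\ three) listed equations is immediate.
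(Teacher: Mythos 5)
Your verification is correct: the paper states this lemma without proof, treating it as immediate bookkeeping from the sign convention $(-1)^p\nabla$ on row $p$ of the \v{C}ech double complex, and your bidegree-by-bidegree unwinding of $\mathrm{d}^{\mathrm{Tot}}=\delta+(-1)^p\nabla$ is exactly the intended argument, with all contributions and signs sorted correctly. The only cosmetic caveat is your remark that the leading minus sign on $a$ ``absorbs the row sign $(-1)^2=+1$''; more precisely it is a normalization that makes the resulting equations $\delta(a)=0$ and $\nabla(a)=\delta(b)$ come out unsigned, as you in fact demonstrate.
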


\subsection{Complexes associated to a filtered de Rham bundle.} \label{complexes_(V,nabla,Fil)}
Let $S$, $\w{S}$, $\mathfrak a$, $\w{X}$, $\w D$, $X$, $D$, $\mathcal I$ be as in Setup~\ref{setup}. Let $\w V$ be a vector bundle on $\w X$. We denote $V = \w V\otimes_{\mathcal O_{\w X}}\mathcal O_X$ and  $V_{\mathcal I} = V\otimes_{\mathcal O_X} \mathcal I$.
%

\paragraph{\emph{Canonical isomorphism.}}

\begin{lem}\label{CanoIsom} Let $\w V$ be a vector bundle on $\w X$. Then 
	\begin{itemize}
		\item[(1).] there is an canonical isomorphism of sheaves over the underlying space of $X$ and $\w X$ 
		\[V_{\mathcal I} \xrightarrow{\sim} \mathcal I\cdot \w V \subset \w V\]
		which maps $v \otimes r$ to $r \w v$,  for any $v\in V$, $r\in \mathcal I$ and any lift $\w v$ of $v\in V$ in $\w V$.
		\item[(2).] if $\w v\in \w V$ is a local section, then $\w v$ is a section of the image of the sheaf $V_{\mathcal I}$ if and only if $\w v \equiv 0 \pmod{\mathcal I}$.
	\end{itemize}
\end{lem}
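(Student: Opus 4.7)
The plan is to construct the map explicitly, verify it is well-defined and bilinear, and then check both surjectivity and injectivity locally using the fact that $\w V$ is locally free.

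First, I would address well-definedness of the assignment $v\otimes r \mapsto r\w v$. The only ambiguity is the choice of lift $\w v$. If $\w v'$ is a second lift of $v$, then $\w v - \w v' \in \mathcal I \w V$, so $r(\w v - \w v') \in \mathcal I^2 \w V$, which vanishes since $\mathcal I^2 = 0$. The same calculation shows that $\mathcal I \w V$ is annihilated by $\mathcal I$ and hence naturally carries the structure of an $\mathcal O_X = \mathcal O_{\w X}/\mathcal I$-module. Bilinearity over $\mathcal O_X$ is then routine: for $f\in \mathcal O_X$ with lift $\w f\in \mathcal O_{\w X}$, one checks that $(fv)\otimes r$ and $v\otimes (fr)$ both map to $r\w f\w v = (\w f r)\w v$, which is independent of the lift $\w f$ by the same square-zero argument. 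This produces a well-defined $\mathcal O_X$-linear morphism of sheaves $V_{\mathcal I} = V\otimes_{\mathcal O_X}\mathcal I \to \mathcal I\cdot \w V$.

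Surjectivity is immediate from the definition: any local section of $\mathcal I\w V$ is a finite sum $\sum r_i \w v_i$ with $r_i\in \mathcal I$, and this is the image of $\sum v_i\otimes r_i$. For injectivity I would work locally: since $\w V$ is locally free of some rank $n$, choose an open $\w U\subset \w X$ on which $\w V|_{\w U} \cong \mathcal O_{\w X}^{\oplus n}$. Then $V|_U \cong \mathcal O_X^{\oplus n}$ and $V_{\mathcal I}|_U \cong \mathcal I^{\oplus n}$, while $\mathcal I\cdot \w V|_{\w U} \cong \mathcal I^{\oplus n}$, and under these identifications the constructed map is the identity. Injectivity (in fact bijectivity) on sections over $\w U$ follows, and since $\w X$ admits a cover by such opens, the map of sheaves is an isomorphism.

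Part (2) is a direct consequence of (1): by definition $\w v \equiv 0 \pmod{\mathcal I}$ exactly when $\w v$ lies in $\mathcal I\cdot \w V$, which by (1) is precisely the image of $V_{\mathcal I}$. The only potential obstacle is a bookkeeping confusion about whether the sheaf $V_{\mathcal I}$, a priori defined on $X$, is being compared to a subsheaf of $\w V$ on $\w X$; this is resolved by observing that $X$ and $\w X$ share the same underlying topological space, so the comparison is unambiguous once well-definedness of the map is settled, which is really the only substantive point in the argument.
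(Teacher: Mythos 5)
Your proposal is correct: the well-definedness via $\mathcal I^2=0$, the local-freeness check of bijectivity, and the identification of $\mathcal I\cdot\w V$ with $\ker(\w V\to V)$ for part (2) are exactly the routine verifications needed. The paper itself gives no proof of this lemma (it only remarks that the map is independent of the choice of lift $\w v$), and your argument supplies precisely that implicit reasoning, so there is no divergence to report.
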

\begin{rmk}
	The isomorphism in (1) of Lemma~\ref{CanoIsom} is well defined, i.e., it does not depend on the choice of $\w v$.
\end{rmk}

\paragraph{\emph{Natural connection on $\mathcal I$.}}
Consider the restriction of $\mathrm{d}\colon \mathcal O_{\w X}\rightarrow \Omega^1_{\w X/\w S}(\log \w D)$ to $\mathcal I$ 
\[\mathrm{d} \colon \mathcal I\rightarrow \mathcal I\cdot \Omega^1_{\w X/\w S}(\log \w D) \simeq \mathcal I \otimes \Omega^1_{X/S}(\log D).\] One checks that the pair $(\mathcal I,\mathrm{d})$ is a de Rham sheaf over $(X,D)/S$.  Note that it is not a de Rham bundle as $\mathcal I$ is not a vector bundle on $X$. 

\paragraph{\emph{de Rham bundle twisted by $(\mathcal I,\mathrm{d})$}.} Let $(V,\nabla)$ be a de Rham bundle over $(X,D)/S$.  Taking tensor product, one gets a tensor de Rham sheaf \[(V,\nabla)_{\mathcal I} := (V,\nabla)\otimes (\mathcal I,\mathrm{d})\]
over $(X,D)/S$ with connection $\nabla_{\mathcal I} \colon V_{\mathcal I} \rightarrow V_{\mathcal I} \otimes \Omega^1_{X/S}(\log D)$ defined by $\nabla_{\mathcal I}(v\otimes r):= \nabla(v) \otimes r + v\otimes \mathrm{d} r$. 

\paragraph{\emph{The complex $\DR(\mEnd(V,\nabla)_{\mathcal I})$.}}
Let $(V,\nabla)$ be a de Rham bundle over $(X,D)/S$.  By Lemma~\ref{Hom_(V,nabla)}, $\mEnd(V,\nabla)$ is also a de Rham bundle over $(X,D)/S$. Consider 
\[\mEnd(V,\nabla)_{\mathcal I}:=\mEnd(V,\nabla)\otimes (\mathcal I,\mathrm{d}),\]
which is a de Rham sheaf over $(X,D)/S$. Then its de Rham complex $\DR(\mEnd(V,\nabla)_\mathcal I)$ is 
\begin{equation} \label{complex_(V,nabla)} 
0 \rightarrow \mEnd(V)_{\mathcal I} \overset{\oln^\End}{\longrightarrow} 
\mEnd(V)_{\mathcal I} \otimes\Omega_{X/S}^1(\log D)  \overset{\oln^\End}{\longrightarrow} 
\mEnd(V)_{\mathcal I} \otimes\Omega_{X/S}^2(\log D)  \overset{\oln^\End}{\longrightarrow} \cdots
\end{equation}
where $\oln^\End = \left(\nabla_{(\nabla,\nabla)}\right)_{\mathcal I}$ in terms of our earlier notation in Equation \ref{eqn:hom_connection}.
\begin{lem} \label{CanoInj}
 Suppose there are three de Rham bundles $(\w V_1,\w\nabla_1)$, $(\w V_2,\w\nabla_2)$ and $(\w V_3,\w\nabla_3)$ over $(\w X,\w D)/\w S$ which lift $(V,\nabla)$, and there are morphisms $f_{12}\colon \w V_1\rightarrow \w V_2$ and $f_{23}\colon \w V_2\rightarrow \w V_3$ lifting the identity map on $V$. Then 
 \begin{itemize}
 	\item[(1).] there are  short exact sequence of de Rham sheaves over $\w X$
 	\[0\rightarrow \mEnd(V,\nabla)_{\mathcal I} \xrightarrow{\iota_{(\w V_i,\w V_j)}} \mHom((\w V_i,\w\nabla_i),(\w V_j,\w\nabla_j)) \longrightarrow \mEnd(V,\nabla)\rightarrow 0.\]
 	In particular, $\iota^{-1}_{(\w V_i,\w V_j)} \circ \nabla_{(\w\nabla_i,\w\nabla_j)} = \oln^\End \circ \iota^{-1}_{(\w V_i,\w V_j)}$ on $\mathcal I\cdot\mHom(\w V_i,\w V_i) = \iota_{(\w V_i,\w V_j)}(\mEnd(V,\nabla)_{\mathcal I})$.
 	\item[(2).]  One has $\iota_{(\w V_1,\w V_3)} \circ \iota_{(\w V_2,\w V_3)}^{-1} = (-\circ f_{12})$ on $\mathcal I\cdot\mHom(\w V_1,\w V_2)$ and $\iota_{(\w V_1,\w V_3)} \circ \iota_{(\w V_1,\w V_2)}^{-1} = (f_{23}\circ -)$  on $\mathcal I\cdot\mHom(\w V_1,\w V_2)$. i.e. the following diagram communicates
 	\begin{equation*}
 	\xymatrix@C=3cm{
 		\mEnd(V)_\mathcal I \ar@{^(->}[r]^-{\iota_{(\w V_1,\w V_2)}} \ar@{=}[d] 
 		&
 		 \mHom(\w V_1,\w V_2)  \ar[d]^{f_{23}\circ -}\\
 		\mEnd(V)_\mathcal I \ar@{^(->}[r]^-{\iota_{(\w V_1,\w V_3)}} 
 		&
 		 \mHom(\w V_1,\w V_3) \\
 		\mEnd(V)_\mathcal I \ar@{^(->}[r]^-{\iota_{(\w V_2,\w V_3)}} \ar@{=}[u] 
 		&
 		 \mHom(\w V_2,\w V_3) \ar[u]_{-\circ f_{12}}\\
 	}
 	\end{equation*}
 	
 \end{itemize}

\end{lem}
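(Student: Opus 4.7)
The plan is to construct each canonical inclusion $\iota_{(\w V_i,\w V_j)}$ by applying Lemma~\ref{CanoIsom} to the vector bundle $\mHom(\w V_i,\w V_j)$ on $\w X$, which restricts over $X$ to $\mEnd(V)$. Concretely, $\iota_{(\w V_i,\w V_j)}$ should send a local section $\alpha\otimes r$ of $\mEnd(V)_{\mathcal I}$ to $r\cdot\w\alpha$, where $\w\alpha\colon \w V_i\to\w V_j$ is any local lift of $\alpha$. This is well defined because any two such lifts differ by a section of $\mathcal I\cdot \mHom(\w V_i,\w V_j)$ and $\mathcal I^2=0$ kills the ambiguity. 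The short exact sequence in (1) is then immediate, since the cokernel of the inclusion $\mathcal I\cdot\mHom(\w V_i,\w V_j)\hookrightarrow \mHom(\w V_i,\w V_j)$ is canonically $\mEnd(V)$.

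Next I would check that $\iota_{(\w V_i,\w V_j)}$ is a morphism of de Rham sheaves. Fix a local section $\alpha\otimes r$ of $\mEnd(V)_{\mathcal I}$ and a local lift $\w\alpha$ of $\alpha$. For a local section $v$ of $\w V_i$, the Leibniz rule gives
\[
\nabla_{(\w\nabla_i,\w\nabla_j)}(r\w\alpha)(v) = \w\nabla_j\bigl(r\w\alpha(v)\bigr) - r\w\alpha\bigl(\w\nabla_i(v)\bigr) = \w\alpha(v)\otimes\mathrm{d}r + r\cdot\nabla_{(\w\nabla_i,\w\nabla_j)}(\w\alpha)(v).
\]
Since $r\in\mathcal I$ and $\mathcal I^2=0$, the second summand is independent of the chosen lift $\w\alpha$ and corresponds under $\iota$ to $\nabla^\End(\alpha)\otimes r$. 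Using the fact, already observed in the paper, that $\mathrm{d}(\mathcal I)\subseteq \mathcal I\cdot\Omega^1_{\w X/\w S}(\log\w D)$, the first summand corresponds to $\alpha\otimes\mathrm{d}r$. Adding these gives exactly $\oln^\End(\alpha\otimes r)$, which is the required intertwining identity.

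For part (2), fix $\alpha\otimes r\in \mEnd(V)_{\mathcal I}$ and a local lift $\w\alpha\colon\w V_1\to\w V_2$ of $\alpha$. Then $f_{23}\circ\w\alpha$ is a local lift of $\alpha$ in $\mHom(\w V_1,\w V_3)$, so by the well-definedness proved above,
\[
(f_{23}\circ -)\bigl(\iota_{(\w V_1,\w V_2)}(\alpha\otimes r)\bigr) = r(f_{23}\circ\w\alpha) = \iota_{(\w V_1,\w V_3)}(\alpha\otimes r).
\]
The lower triangle is symmetric: given any local lift $\w\beta\colon\w V_2\to\w V_3$ of $\alpha$, the composition $\w\beta\circ f_{12}$ is a local lift of $\alpha$ in $\mHom(\w V_1,\w V_3)$, whence $(-\circ f_{12})\bigl(\iota_{(\w V_2,\w V_3)}(\alpha\otimes r)\bigr)=r(\w\beta\circ f_{12})=\iota_{(\w V_1,\w V_3)}(\alpha\otimes r)$.

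The main obstacle I anticipate is not conceptual but rather the bookkeeping of lifts: both the definition of $\iota$ and the connection formula use arbitrary local lifts of identity sections, so at every stage one must invoke $\mathcal I^2=0$, along with $\mathrm{d}(\mathcal I)\subseteq \mathcal I\cdot\Omega^1$, to confirm the constructions are independent of those choices. Once this bookkeeping is settled, both assertions reduce to the two short Leibniz-rule computations above.
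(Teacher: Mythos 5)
Your proposal is correct and follows essentially the same route as the paper: the map $\iota_{(\w V_i,\w V_j)}$ is defined via Lemma~\ref{CanoIsom} by $\alpha\otimes r\mapsto r\,\w\alpha$ for any local lift $\w\alpha$, and part (2) is exactly the paper's observation that $f_{23}\circ\w\alpha$ and $\w\beta\circ f_{12}$ are again lifts of $\alpha$, so the canonical map is unchanged. Your explicit Leibniz-rule check of the compatibility $\iota^{-1}\circ\nabla_{(\w\nabla_i,\w\nabla_j)}=\oln^\End\circ\iota^{-1}$ just fills in a detail the paper leaves implicit when citing Lemma~\ref{CanoIsom}.
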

\begin{proof}
Lemma~{CanoIsom} implies (1) and the map $\iota_{(\w V_i,\w V_j)}$ satisfies
\[ \iota_{(\w V_i,\w V_j)}(g\otimes r) = r\cdot\w g\]
where $g$ is a local section of $\mEnd(V)$, $r$ is a local section of $\mathcal I$ and $\w g$ is a local section of $\mHom(\w V_i,\w V_j)$ which lifts $g$.

Let $\w g_{23}$ be a lifting of $g$ in $\mHom(\w V_2,\w V_3)$. Then $\w g_{23}\circ f_{12}$ is an element in $\mHom(\w V_2,\w V_3)$ which also lifts $f$. By the construction of $\iota_{(\w V_i,\w V_j)}$, one has 
\[\iota_{(\w V_1,\w V_3)}(g\otimes r) = r\cdot (\w g_{23}\circ f_{12}) = \iota_{(\w V_2,\w V_3)}(g\otimes r) \circ f_{12}.\]
Similarly one has 
\[\iota_{(\w V_1,\w V_3)}(g\otimes r) = r\cdot (f_{23}\circ \w g_{12}) = f_{23}\circ \iota_{(\w V_1,\w V_2)}(g\otimes r).\] 
Thus (2) follows.
\end{proof}
\paragraph{\emph{Filtered de Rham bundle twisted by $(\mathcal I,\mathrm{d},\Fil_{\mathrm{tri}})$}.} Suppose $(V,\nabla,\Fil)$ is a filtered de Rham bundle. Taking the tensor product with $(\mathcal I,\mathrm{d},\Fil_{\mathrm{tri}})$, one obtains a filtered de Rham sheaf on $(X,D)/S$. 
\[(V,\nabla,\Fil)_{\mathcal I} := (V,\nabla,\Fil)\otimes (\mathcal I,\mathrm{d},\Fil_{\mathrm{tri}})\]
where the underlying de Rham bundle is $(V,\nabla)_\mathcal I$ and the filtration $\Fil_\mathcal I$ is defined by $\Fil^\ell_\mathcal I(V_\mathcal I) := (\Fil^\ell V)_\mathcal I$. For simplifying the notations, we will write $\Fil^\ell_\mathcal I(V_\mathcal I)$ as $\Fil^\ell V_\mathcal I$. 


\paragraph{\emph{The filtration on the complex $\DR(\mEnd(V,\nabla)_{\mathcal I})$.}} For a filtered de Rham bundle $(V,\nabla,\Fil)$ over $(X,D)/S$, The filtration on
\[\mEnd(V,\nabla,\Fil)_{\mathcal I} := \mEnd(V,\nabla,\Fil)\otimes (\mathcal I,\mathrm{d},\Fil_{\mathrm{tri}}).\]
induces a filtration on $\DR(\mEnd(V,\nabla)_{\mathcal I})$
\[  \cdots
\subset   \Fil^{\ell+1}\DR(\mEnd(V,\nabla)_{\mathcal I}) 
\subset   \Fil^{\ell  }\DR(\mEnd(V,\nabla)_{\mathcal I}) 
\subset   \Fil^{\ell-1}\DR(\mEnd(V,\nabla)_{\mathcal I}) 
\subset \cdots \subset \DR(\mEnd(V,\nabla)_{\mathcal I}) \]
where $\Fil^{\ell}\DR(\mEnd(V,\nabla)_{\mathcal I})$ is defined as
\begin{equation}\label{complex_(V,nabla,Fil)}
 0  \rightarrow 
 \Fil^{\ell}\mEnd(V)_{\mathcal I} 
 \overset{\oln^\End}{\longrightarrow} 
 \Fil^{\ell-1}\mEnd(V)_{\mathcal I}
 \otimes\Omega_{X/S}^1(\log D)
 \overset{\oln^\End}{\longrightarrow} 
 \Fil^{\ell-2}\mEnd(V)_{\mathcal I}\otimes\Omega_{X/S}^2(\log D) 
\overset{\oln^\End}{\longrightarrow} \cdots
 \end{equation}

\begin{lem}\label{CanoInj_Fil}
	 Suppose $(\w V_1,\w\nabla_1,\w\Fil_1)$ and $(\w V_2,\w\nabla_2,\w\Fil_2)$ are de Rham bundles over $(\w X,\w D)/\w S$ which lift $(V,\nabla,\Fil)$. For each $\ell\in \mathbb Z$, one has an exact sequence over $\w X$
	\[0\rightarrow \Fil^\ell\mEnd(V)_{\mathcal I} \xrightarrow{\iota_{(\w V_1,\w V_2)}} \Fil^\ell_{(\w\Fil_1,\w\Fil_2)}\mHom(\w V_1,\w V_2) \longrightarrow \Fil^\ell\mEnd(V)\rightarrow 0.\]
\end{lem}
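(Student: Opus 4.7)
The plan is to upgrade the non-filtered short exact sequence of Lemma~\ref{CanoInj}(1) by intersecting its middle term with the filtration $\Fil^\ell_{(\w\Fil_1,\w\Fil_2)}$. Concretely, I will establish three things: (i) that $\iota_{(\w V_1,\w V_2)}$ maps $\Fil^\ell\mEnd(V)_\mathcal I$ into $\Fil^\ell_{(\w\Fil_1,\w\Fil_2)}\mHom(\w V_1,\w V_2)$; (ii) that the reduction $\mHom(\w V_1,\w V_2)\twoheadrightarrow \mEnd(V)$ sends $\Fil^\ell_{(\w\Fil_1,\w\Fil_2)}\mHom(\w V_1,\w V_2)$ onto $\Fil^\ell\mEnd(V)$; and (iii) that the kernel of this restricted map is precisely $\iota_{(\w V_1,\w V_2)}(\Fil^\ell\mEnd(V)_\mathcal I)$. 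Together with the injectivity of $\iota_{(\w V_1,\w V_2)}$ already proved in Lemma~\ref{CanoInj}(1), these three statements yield the desired exact sequence.

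Since the claim is local, I would first reduce to an affine open. By Lemma~\ref{local freeness_(V,Fil)} applied to each $(\w V_i,\w\Fil_i)$, one can find a cover on which every step of each filtration is free. Because $\mathcal I^2=0$, both $\w\Fil_1$ and $\w\Fil_2$ reduce modulo $\mathcal I$ to the filtration $\Fil$ on $V$, so the ranks of corresponding filtered pieces match; hence after refining the cover I can choose adapted bases of $(\w V_i,\w\Fil_i)$ whose reductions modulo $\mathcal I$ give a common adapted basis of $(V,\Fil)$. In this local picture, Lemma~\ref{main_lemma}(1) identifies $\Fil^\ell_{(\w\Fil_1,\w\Fil_2)}\mHom(\w V_1,\w V_2)$ with block matrices whose $(i,j)$-entries are required to vanish outside a prescribed pattern determined by $\ell$ and the filtration degrees of the basis vectors. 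Claims (i) and (ii) then reduce to lifting matrix entries from $\mathcal O_X$ to $\mathcal O_{\w X}$ while preserving this block pattern, which is immediate.

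For (iii), any $\w g\in \Fil^\ell_{(\w\Fil_1,\w\Fil_2)}\mHom(\w V_1,\w V_2)$ reducing to zero modulo $\mathcal I$ has all entries in $\mathcal I$, still supported inside the prescribed block pattern; via the isomorphism of Lemma~\ref{CanoIsom} this corresponds to a section of $\mEnd(V)_\mathcal I$ whose matrix is supported in the same pattern, hence lies in $\Fil^\ell\mEnd(V)_\mathcal I$. The one technical point to be careful about is that the canonical isomorphism $\mathcal I\cdot\mHom(\w V_1,\w V_2)\simeq \mEnd(V)_\mathcal I$ of Lemma~\ref{CanoIsom} intertwines the induced filtration with $\Fil^\ell\mEnd(V)_\mathcal I$; once the adapted bases match modulo $\mathcal I$ this is transparent, and the intrinsic characterization in Lemma~\ref{main_lemma}(1) guarantees independence of the basis choice, so the local verifications glue to give the stated exact sequence globally.
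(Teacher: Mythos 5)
Your proof is correct and matches the paper's (one-line) argument in spirit: the paper simply invokes Lemma~\ref{CanoIsom} applied to the locally split subbundle $\Fil^\ell_{(\w\Fil_1,\w\Fil_2)}\mHom(\w V_1,\w V_2)\subset\mHom(\w V_1,\w V_2)$, whose reduction mod $\mathcal I$ is $\Fil^\ell\mEnd(V)$, and you are just spelling out this verification with adapted bases and Lemma~\ref{main_lemma}(1). One small quibble: the reason both $\w\Fil_1$ and $\w\Fil_2$ reduce to $\Fil$ is the lifting hypothesis in the statement, not the condition $\mathcal I^2=0$.
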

\begin{proof}This follows Lemma~\ref{CanoIsom}.
\end{proof}
\paragraph{\emph{The complex $\mathscr C$.}} We define a complex $\mathscr C$ which will govern the deformation theory. Let $(V,\nabla,\Fil)$ be a filtered de Rham bundle over $(X,D)/S$. Denote by $\mathscr C$ the cokernel of the natural injection $\Fil^0\DR(\mEnd(V,\nabla)_\mathcal I)\hookrightarrow \DR(\mEnd(V,\nabla)_\mathcal I)$. i.e. $\mathscr C:=\frac{\DR(\mEnd(V,\nabla)_\mathcal I)}{\Fil^0 \DR(\mEnd(V,\nabla)_\mathcal I)}$
\begin{equation}  \label{complex_(Fil)}
0 \rightarrow 
\frac{\DR(\mEnd(V,\nabla)_\mathcal I)}{\Fil^0\DR(\mEnd(V,\nabla)_\mathcal I)} \overset{\nabla^\End}{\longrightarrow} 
\frac{\DR(\mEnd(V,\nabla)_\mathcal I)}{\Fil^{-1}\DR(\mEnd(V,\nabla)_\mathcal I)} \otimes\Omega_{X/S}^1(\log D)
\overset{\nabla^\End}{\longrightarrow}
\frac{\DR(\mEnd(V,\nabla)_\mathcal I)}{\Fil^{-2}\DR(\mEnd(V,\nabla)_\mathcal I)} \otimes\Omega_{X/S}^2(\log D)
\overset{\nabla^\End}{\longrightarrow} \cdots
\end{equation}

\subsection{Complexes associated to a graded Higgs bundle.} 
Let $S$, $\w{S}$, $\mathfrak a$, $\w{X}$, $\w D$, $X$, $D$, $\mathcal I$ be as in Setup~\ref{setup}. One has a natural Higgs sheaf $(\mathcal I,0)$, i.e.,  the sheaf $\mathcal I$ together with the trivial Higgs field. 

\paragraph{\emph{The complex $\DR(\mEnd(E,\theta)_{\mathcal I})$.}}
Let $(E,\theta)$ be a Higgs bundle over $(X,D)/S$. Then $\mEnd(E,\theta):=(\mEnd(E),\theta^\End)$ is also Higgs bundle over $(X,D)/S$, where
$\theta^\End(f) = \theta\circ f - f\circ \theta$. Consider the Higgs sheaf
\[\mEnd(E,\theta)_{\mathcal I}:= \mEnd(E,\theta)\otimes (\mathcal I,0),\]
 over $(X,D)/S$, where $\overline{\theta}^\End(f\otimes r)= \theta^\End(f) \otimes r$. Then its Higgs complex $\DR(\mEnd(E,\theta)_\mathcal I)$ is 
\begin{equation} \label{complex_(E,nabla)} 
0 \rightarrow 
\overset{\overline{\theta}^\End}{\longrightarrow} 
\mEnd(E)_{\mathcal I} \otimes\Omega_{X/S}^1(\log D)  
\overset{\overline{\theta}^\End}{\longrightarrow} 
\mEnd(E)_{\mathcal I} \otimes\Omega_{X/S}^2(\log D)  
\overset{\overline{\theta}^\End}{\longrightarrow} \cdots
\end{equation}

\paragraph{\emph{The grading structure on the complex $\DR(\mEnd(E,\theta)_{\mathcal I})$.}}
Suppose $(E,\theta,\Gr)$ is a graded Higgs bundle. Then $\Gr$ induces a natural graded structure on $\mEnd(E,\theta)_{\mathcal I}$ given by
\[\Gr^\ell(\mEnd(E)_{\mathcal I}):=(\Gr^\ell\mEnd(E))_{\mathcal I} \]
This grading structure induces a grading structure on the complex $\DR(\mEnd(V,\nabla)_{\mathcal I})$
\[\DR(\mEnd(E,\theta)_{\mathcal I}) = \bigoplus_{l\in \mathbb{Z}}  \Gr^{\ell}\DR(\mEnd(E,\theta)_{\mathcal I}),\] 
where $\Gr^{\ell}\DR(\mEnd(E,\theta)_{\mathcal I})$ is defined as
\begin{equation}\label{complex_(E,theta,Gr)}
0  \rightarrow 
\Gr^{\ell}\mEnd(E)_{\mathcal I} 
\overset{\overline{\theta}^\End}{\longrightarrow} 
\Gr^{\ell-1}\mEnd(E)_{\mathcal I}
\otimes\Omega_{X/S}^1(\log D)
\overset{\overline{\theta}^\End}{\longrightarrow} 
\Gr^{\ell-2}\mEnd(E)_{\mathcal I}\otimes\Omega_{X/S}^2(\log D) 
\overset{\overline{\theta}^\End}{\longrightarrow} \cdots
\end{equation}

\begin{lem}\label{exa2}
Let $(V,\nabla,\Fil)$ be a filtered de Rham bundle over $V$. Let $(E,\theta,\Gr)$ be the graded Higgs bundle defined in Example~\ref{exa1}. Then 
\[\Gr^{\ell}\DR(\mEnd(E,\theta)_{\mathcal I}) = \Fil^{\ell}\DR(\mEnd(V,\nabla)_{\mathcal I})/\Fil^{\ell+1}\DR(\mEnd(V,\nabla)_{\mathcal I}).\]
In particular, the complex $\mathscr C$ in~\ref{complex_(Fil)} is an successive extension of $\Gr^{-1}\DR(\mEnd(E,\theta)_{\mathcal I}),\Gr^{-2}\DR(\mEnd(E,\theta)_{\mathcal I}),\cdots$.
\end{lem}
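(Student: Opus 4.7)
The plan is to establish the first displayed equality term by term in the complex, and then to deduce the ``in particular'' statement by filtering $\mathscr{C}$ by the subcomplexes $\Fil^{\ell}\DR(\mEnd(V,\nabla)_{\mathcal I})/\Fil^{0}\DR(\mEnd(V,\nabla)_{\mathcal I})$ for $\ell\leq 0$.

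First I would unpack both sides in homological degree $i$: the $i$-th term of $\Gr^{\ell}\DR(\mEnd(E,\theta)_{\mathcal I})$ is $\Gr^{\ell-i}\mEnd(E)_{\mathcal I}\otimes\Omega^{i}_{X/S}(\log D)$, while the corresponding term on the right is
\[\frac{\Fil^{\ell-i}\mEnd(V)_{\mathcal I}\otimes\Omega^{i}_{X/S}(\log D)}{\Fil^{\ell-i+1}\mEnd(V)_{\mathcal I}\otimes\Omega^{i}_{X/S}(\log D)}.\]
Thus it suffices to verify, for every integer $m$, a natural isomorphism
\[\Fil^{m}\mEnd(V)/\Fil^{m+1}\mEnd(V)\;\cong\;\Gr^{m}\mEnd(E),\]
together with compatibility under the twist by $(\mathcal I,\Fil_{\mathrm{tri}})$; the latter is immediate because the trivial filtration on $\mathcal I$ is concentrated in a single degree, so tensoring with $\mathcal I$ commutes with passage to $\Gr$. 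For the displayed identification I would work locally on an open affine where $\Fil$ admits an adapted basis, which exists by Lemma~\ref{local freeness_(V,Fil)}. In such a basis $V$ splits as a direct sum of its graded pieces, and the definitions (\ref{Hom_Fil}) and (\ref{Hom_Gr}) then exhibit both sides as the same bundle $\bigoplus_{\ell_{1}}(\Gr^{\ell_{1}}V)^{\vee}\otimes \Gr^{\ell_{1}+m}V$. Next I would check that the differentials agree: by Lemma~\ref{Hom_(V,nabla,Fil)} the connection $\oln^{\End}$ on $\mEnd(V)_{\mathcal I}$ satisfies Griffith transversality with respect to the induced filtration, hence it descends to an $\mathcal O_{X}$-linear map on graded quotients, and a local computation in an adapted basis identifies this induced map with the Higgs field $\overline{\theta}^{\End}$ on $\mEnd(E)_{\mathcal I}$. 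This step is essentially Example~\ref{exa1} applied to the filtered de Rham bundle $\mEnd(V,\nabla,\Fil)_{\mathcal I}$.

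For the ``in particular'' statement, I would endow $\mathscr{C}$ with the decreasing filtration
\[\Fil^{\ell}\mathscr{C}\;:=\;\Fil^{\ell}\DR(\mEnd(V,\nabla)_{\mathcal I})/\Fil^{0}\DR(\mEnd(V,\nabla)_{\mathcal I}),\qquad \ell\leq 0,\]
image of $\Fil^{\ell}\DR(\mEnd(V,\nabla)_{\mathcal I})$ under the quotient map. This filtration is exhaustive (as $\Fil$ is on $V$) and its successive graded pieces are $\Fil^{\ell}/\Fil^{\ell+1}\DR(\mEnd(V,\nabla)_{\mathcal I}) = \Gr^{\ell}\DR(\mEnd(V,\nabla)_{\mathcal I})$ for $\ell=-1,-2,\ldots$; by the first half, each such piece is canonically isomorphic to $\Gr^{\ell}\DR(\mEnd(E,\theta)_{\mathcal I})$, which is the desired description of $\mathscr{C}$.

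The main conceptual bookkeeping will be the identification $\Gr^{m}\mEnd(V)\cong\mEnd(E)^{m}$ together with the matching of the induced differential with $\overline{\theta}^{\End}$; after these are checked against the indexing conventions of (\ref{Hom_Fil}) and (\ref{Hom_Gr}), everything else is formal, and no input beyond the linear-algebra considerations of Section~\ref{complexes_(V,nabla,Fil)} is required.
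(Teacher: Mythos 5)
Your argument is correct, and it is essentially the paper's (implicit) approach: the paper states Lemma~\ref{exa2} without proof, treating it as an immediate consequence of Example~\ref{exa1} and the definitions (\ref{Hom_Fil}), (\ref{Hom_Gr}), which is exactly what your term-by-term identification $\Fil^{m}\mEnd(V)_{\mathcal I}/\Fil^{m+1}\mEnd(V)_{\mathcal I}\cong\Gr^{m}\mEnd(E)_{\mathcal I}$ (with $\oln^{\End}$ inducing $\overline{\theta}^{\End}$) together with the filtration of $\mathscr C$ by the images of $\Fil^{\ell}\DR(\mEnd(V,\nabla)_{\mathcal I})$ carries out. The only refinement worth making is that the isomorphism is canonical --- send $f$ with $f(\Fil^{a}V)\subset\Fil^{a+m}V$ to the induced graded map --- so the adapted-basis computation serves only to verify that this canonical map is an isomorphism compatible with the differentials.
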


\subsection{Local Liftings}
In this subsection, we show that a filtered vector bundle with a Griffith's tranverse filtration locally lifts. We will later use this fact to construct a tangent-obstruction theory for filtered de Rham bundles. Let $S$, $\w{S}$, $\mathfrak a$, $\w{X}$, $\w D$, $X$, $D$, $\mathcal I$ be given as in Setup~\ref{setup}.

\begin{lem} \label{local_lifting_(V,Fil)}
Let $(V,\nabla,\Fil)$ be a filtered vector bundle over $X$ with a connection satisfying Griffith's transversality. Suppose that $\Fil^\ell V$ is free over $\mathcal O_X$ for all $\ell\in \mathbb Z$. Then
\begin{itemize}
	\item[(1).] there exists lifting $\w V$ over $\w X$ of $V$;
	\item[(2).] for any given lifting  $\w V$ over $\w X$ of $V$, there exists a filtration $\w\Fil$ on $\w V$ which lifts $\Fil$;
	\item[(3).] for any given lifting $(\w V,\w\Fil)$ over $\w X$ of the filtered vector bundle $(V,\Fil)$, there exists a (not necessarily integrable) connection $\w\nabla$ on $\w V$ over $(\w X,\w D)/\w S$ which lifts $\nabla$ and satisfies Griffith's transversality with respect to $\w \Fil$.
	\item[(4).] Let $(\w V,\w\Fil)$ and $(\w V',\w\Fil')$ be two liftings of $(V,\Fil)$ over $\w X$. Then there exists an isomorphism $f\colon (\w V,\w\Fil)\rightarrow (\w V',\w\Fil')$ which lifts the identity map $\mathrm{id}_{V}$.
\end{itemize} 
\end{lem}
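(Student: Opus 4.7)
The plan is to reduce all four statements to direct constructions with adapted bases, exploiting Nakayama's lemma across the square-zero thickening. Since $\mathcal I^2=0$, the ideal $\mathcal I$ is nilpotent and hence lies in the Jacobson radical of $\mathcal O_{\w X}$, so any lift of a set of generators of $V$ to $\w V$ is itself a set of generators, and in fact a basis whenever the ranks agree. Fix once and for all an adapted basis $\{e_1,\dots,e_r\}$ of $(V,\Fil)$ (possible by the freeness hypothesis) and let $\ell_i$ denote the weight of $e_i$, so that $e_i \in \Fil^{\ell_i}V \setminus \Fil^{\ell_i+1}V$. The key technical step, used in both (3) and (4), is that this adapted basis lifts to an adapted basis of any given lift $(\w V,\w\Fil)$: one simply lifts each $e_i$ to some $\w e_i\in \w\Fil^{\ell_i}\w V$ using surjectivity of the reduction map $\w\Fil^{\ell_i}\w V\twoheadrightarrow \Fil^{\ell_i}V$, and Nakayama's lemma then ensures that $\{\w e_i\}$ is an $\mathcal O_{\w X}$-basis of $\w V$ while $\{\w e_i : \ell_i\geq \ell\}$ generates $\w\Fil^\ell \w V$ for every $\ell$.

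Granting this, parts (1), (2), and (4) follow quickly. For (1), take the trivial lift $\w V:=\mathcal O_{\w X}^r$. For (2), given $\w V$, lift each $e_i$ arbitrarily to $\w e_i\in \w V$ to obtain a basis by Nakayama, and declare $\w\Fil^\ell \w V$ to be the $\mathcal O_{\w X}$-submodule generated by $\{\w e_i : \ell_i\geq \ell\}$; this is a filtration by free direct summands reducing to $\Fil$ modulo $\mathcal I$. For (4), the key step produces adapted bases $\{\w e_i\}$ and $\{\w e'_i\}$ of $(\w V,\w\Fil)$ and $(\w V',\w\Fil')$ both lifting $\{e_i\}$, and the $\mathcal O_{\w X}$-linear map sending $\w e_i\mapsto \w e'_i$ is a filtration-preserving isomorphism lifting $\mathrm{id}_V$ (it reduces to the identity modulo $\mathcal I$, hence has unit determinant).

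Part (3) is the main content. Given $(\w V,\w\Fil)$, apply the key step to obtain an adapted basis $\{\w e_i\}$ of $(\w V,\w\Fil)$ lifting $\{e_i\}$, and expand $\nabla(e_i)=\sum_j e_j\otimes \omega_{ij}$ with $\omega_{ij}\in \Omega^1_{X/S}(\log D)$; Griffith transversality forces $\omega_{ij}=0$ whenever $\ell_j<\ell_i-1$. Since the reduction $\Omega^1_{\w X/\w S}(\log \w D)\to \Omega^1_{X/S}(\log D)$ is a surjection of locally free sheaves, lift each $\omega_{ij}$ to some $\w\omega_{ij}$, choosing $\w\omega_{ij}=0$ whenever $\omega_{ij}=0$. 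Defining $\w\nabla(\w e_i):=\sum_j \w e_j\otimes \w\omega_{ij}$ and extending $\mathcal O_{\w S}$-linearly via the Leibniz rule yields a connection $\w\nabla$ reducing to $\nabla$. For $\w e_i\in \w\Fil^{\ell_i}\w V$, only terms with $\ell_j\geq \ell_i-1$ survive, giving $\w\nabla(\w e_i)\in \w\Fil^{\ell_i-1}\w V\otimes \Omega^1_{\w X/\w S}(\log \w D)$; the Leibniz rule extends this conclusion to all of $\w\Fil^\ell \w V$ for every $\ell$, establishing Griffith transversality. The main obstacle throughout is the inductive bookkeeping in the key step that lifts an adapted basis compatibly across the filtration; once it is in hand, the remaining verifications are formal.
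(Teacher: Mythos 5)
Your proposal is correct and follows essentially the same route as the paper: all four parts are handled by lifting an adapted basis of $(V,\Fil)$ (the paper's implicit use of Nakayama in parts (2) and (4) is exactly your ``key step''). Your explicit lifting of the connection matrix $(\omega_{ij})$ in part (3), with zero lifts of the entries killed by Griffith transversality, is just the coordinate form of the paper's argument, which writes $\nabla=\mathrm{d}+\omega$ with $\omega\in\Fil^{-1}\Hom(V,V\otimes\Omega^1_{X/S}(\log D))$ for the ``constant'' connection $\mathrm{d}$ attached to the lifted basis and lifts $\omega$ inside the filtered Hom sheaf.
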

\begin{proof} Part (1) follows the freeness of $V$. For the part (2), by the freeness of $\Fil^\ell V$, there exists an adapted basis  $\{e_1,\cdots,e_r\}$ of $(V,\Fil)$. i.e. 
	\[V = \bigoplus_{j=1}^r \mathcal O_X \cdot e_i \quad \text{and} \quad \Fil^\ell V = \bigoplus_{j: e_j \in \Fil^\ell V} \mathcal O_X\cdot e_j.\]
    Fix a lifting $\w e_j$ of $e_j$ in $\w V$ for any $j=1,\cdots,r$.  Then the filtration on $\w V$ defined by
	\[\w\Fil^\ell \w V = \bigoplus_{j: e_j \in \Fil^\ell V} \mathcal O_{\w X}\cdot \w e_j\]
	is a lifting of $\Fil$ on $\w V$.
	
	For part (3), choose an adopted basis $\{\w e_1,\cdots,\w e_r\}$ of $\w V$ and consider the connection $\w{\mathrm{d}}$ on $\w V$ defined by $\w{\mathrm{d}}(\w e_j)=0$. It obviously satisfies Griffith's transversality. Denote $\mathrm{d} = \w{\mathrm{d}}\mid _V$. Since $\nabla$ also satisfies Griffith's transversality and $\nabla -\mathrm{d}$ is $\mathcal O_X$-linear, $\nabla - \mathrm{d} \in \Fil^{-1} \Hom(V,V\otimes \Omega^1_{X/S}(\log D))$ by Lemma~\ref{main_lemma}. By Lemma~\ref{CanoInj_Fil}, the map
	\[\Fil^{-1} \Hom(\w V,\w V\otimes \Omega^1_{\w X/\w S}(\log \w D)) \twoheadrightarrow  \Fil^{-1} \Hom(V,V\otimes \Omega^1_{X/S}(\log D))\]
	is surjective. Hence there exists an element $\w\omega$ in $\Fil^{-1} \Hom(\w V,\w V\otimes \Omega^1_{\w X/\w S}(\log \w D))$ which lifts $\nabla - \mathrm{d}$.
	 Consider the connection $\w\nabla:= \w{\mathrm{d}} + \w\omega$ on $\w V$, which is a lifting of $\nabla$. By Lemma~\ref{diff_conn}, $\w\nabla$ satisfies Griffith's transversality. 
	
	We construct an $f$ in the part (4) as following. Since $\Fil^\ell V$ is free, $\w \Fil^\ell \w V$ and $\w \Fil'^\ell \w V'$ are also free with the same rank. We may lift $\{e_1,\cdots,e_r\}$ to an adapted basis $\{\w e_1,\cdots,\w e_r\}$ of $(\w V,\w\Fil)$ and to an adopted basis $\{\w e_1',\cdots,\w e_r'\}$ of $(\w V',\w\Fil')$. The isomorphism $f\colon \w V \rightarrow \w V'$ sending $\w e_i$ to $\w e_i'$ preserves the filtrations, which is what we need.
\end{proof}

\section{Deformations of filtered de Rham bundles and Hodge filtrations}
In this section we will study the deformation theory of filtered de Rham bundles and Hodge filtrations. \footnote{Note that every de Rham bundle admits a natural filtration: the \emph{trivial filtration}. Therefore the deformation theory of filtered de Rham bundles generalizes the deformation theory of de Rham bundles.} The main results are Theorem~\ref{deformation_(V,nabla,Fil)} and Theorem~\ref{deformation_(Fil)}.

Let $S$, $\w{S}$, $\mathfrak a$, $\w{X}$, $\w D$, $X$, $D$, $\mathcal I$ be given as in Setup~\ref{setup}.

\subsection{Deforming a filtered de Rham bundle}

In this subsection, we study the deformation theory of a filtered de Rham bundle.  For a filtered de Rham bundle $(V,\nabla,\Fil)$ over $(X,D)/S$, recall the complex of sheaves $\Fil^0 \DR(\mEnd(V,\nabla)_\mathcal I)$ defined in~\ref{complex_(V,nabla,Fil)}:
\begin{equation*} 
0 
\rightarrow 
\Fil^0 \mEnd(V)_{\mathcal I} 
\overset{\oln^\End}{\longrightarrow} 
\Fil^{-1} \mEnd(V)_{\mathcal I} \otimes\Omega_{X/S}^1(\log D)  
\overset{\oln^\End}{\longrightarrow} 
\Fil^{-2} \mEnd(V)_{\mathcal I} \otimes\Omega_{X/S}^1(\log D)  
\overset{\oln^\End}{\longrightarrow} \cdots
\end{equation*}
This complex yields a tangent-obstruction theory for a filtered de Rham bundle.
\begin{thm}\label{deformation_(V,nabla,Fil)} Let $(V,\nabla,\Fil)$ be a (logarithmic) filtered de Rham bundle over $(X,D)/S$. Then
	\begin{itemize}
		\item[(1).] the obstruction to lifting $(V,\nabla,\Fil)$ to a filtered de Rham bundle over $(\w X,\w D)/\w S$ lies in $\mathbb H^2\big(\Fil^0 \DR(\mEnd(V,\nabla)_\mathcal I)\big)$;
		\item[(2).] if $(V,\nabla,\Fil)$ has lifting $(\widetilde{V},\widetilde{\nabla},\w\Fil)$, then the lifting set is an $\mathbb H^1\big(\Fil^0 \DR(\mEnd(V,\nabla)_\mathcal I)\big)$-torsor;
		\item[(3).] the infinitesimal automorphism group of $(\widetilde{V},\widetilde{\nabla},\w\Fil)$ over $(V,\nabla,\Fil)$ is $\mathbb H^0\big(\Fil^0 \DR(\mEnd(V,\nabla)_\mathcal I)\big)$.
	\end{itemize}
\end{thm}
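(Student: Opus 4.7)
The plan is a standard Čech-hypercohomology tangent-obstruction construction built on top of the local lifting result Lemma~\ref{local_lifting_(V,Fil)}. First I would choose an affine open cover $\mathcal{U}=\{U_i\}$ of $X$ (with the induced cover $\{\w U_i\}$ of $\w X$) fine enough that every $\Fil^{\ell} V|_{U_i}$ is free (Lemma~\ref{local freeness_(V,Fil)}). By Lemma~\ref{local_lifting_(V,Fil)}, parts (1)--(3), each restriction $(V,\nabla,\Fil)|_{U_i}$ admits a local lift $(\w V_i,\w\nabla_i,\w\Fil_i)$ over $\w U_i$, and by part (4) on each double overlap we can fix a filtered isomorphism $f_{ij}\colon(\w V_i,\w\Fil_i)|_{\w U_{ij}}\xrightarrow{\sim}(\w V_j,\w\Fil_j)|_{\w U_{ij}}$ lifting $\mathrm{id}_V$. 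The strategy is to extract from the failure of the $f_{ij}$ and $\w\nabla_i$ to glue a cocycle in the total Čech complex of $\Fil^0\DR(\mEnd(V,\nabla)_{\mathcal I})$.

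For part (1), define a Čech $2$-cochain $a_{ijk}\in\Fil^0\mEnd(V)_{\mathcal I}$ on $U_{ijk}$ by applying $\iota_{(\w V_k,\w V_k)}^{-1}$ (Lemma~\ref{CanoInj_Fil}) to the automorphism $f_{jk}\circ f_{ij}\circ f_{ik}^{-1}-\mathrm{id}$, which preserves $\w\Fil_k$ and reduces to $0$ modulo $\mathcal I$. Similarly, $\w\nabla_j-f_{ij}\circ \w\nabla_i\circ f_{ij}^{-1}$ is an $\mathcal O_{\w X}$-linear map vanishing mod $\mathcal I$, and since both connections satisfy Griffith's transversality with respect to $\w\Fil_j$, Lemma~\ref{diff_conn} together with Lemma~\ref{CanoInj_Fil} produces $b_{ij}\in \Fil^{-1}\mEnd(V)_{\mathcal I}\otimes\Omega^1_{X/S}(\log D)$. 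A short computation (using part (2) of Lemma~\ref{CanoInj} to pass freely between the various $\iota_{(\w V_i,\w V_j)}$) shows $\delta a=0$, $\oln^{\End}a=\delta b$, and $\oln^{\End} b=0$; hence by Lemma~\ref{cocycle & coboundary} the triple $(-a,b,0)$ is a $2$-cocycle in the Čech total complex of $\Fil^0\DR(\mEnd(V,\nabla)_{\mathcal I})$. Changing the $f_{ij}$ by filtered automorphisms lifting $\mathrm{id}_V$, or replacing the $(\w V_i,\w\nabla_i,\w\Fil_i)$ by other local lifts, alters $(a,b)$ by a coboundary, so the class $o(V,\nabla,\Fil)\in\mathbb H^2\big(\Fil^0\DR(\mEnd(V,\nabla)_{\mathcal I})\big)$ is well defined. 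Vanishing of $o$ lets one adjust the $f_{ij}$ and $\w\nabla_i$ simultaneously so that they glue to a global lift $(\w V,\w\nabla,\w\Fil)$.

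For part (2), fix a lift $(\w V,\w\nabla,\w\Fil)$ and let $(\w V',\w\nabla',\w\Fil')$ be another. Over each $U_i$ choose a filtered isomorphism $g_i$ between the two lifts via Lemma~\ref{local_lifting_(V,Fil)}(4). The Čech $1$-cochain with components $\iota_{(\w V,\w V)}^{-1}(g_j^{-1}g_i-\mathrm{id})\in\Fil^0\mEnd(V)_{\mathcal I}$ on $U_{ij}$ and $\iota_{(\w V,\w V)}^{-1}(\w\nabla-g_i^{-1}\w\nabla' g_i)\in \Fil^{-1}\mEnd(V)_{\mathcal I}\otimes\Omega^1_{X/S}(\log D)$ on $U_i$ is a cocycle by the second part of Lemma~\ref{cocycle & coboundary}, and different choices of $g_i$ produce cohomologous cocycles; the resulting map assigning to a lift a class in $\mathbb H^1\big(\Fil^0\DR(\mEnd(V,\nabla)_{\mathcal I})\big)$ is a simply transitive action. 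For part (3), an infinitesimal automorphism of $(\w V,\w\nabla,\w\Fil)$ over $\mathrm{id}_V$ is of the form $\mathrm{id}+\iota_{(\w V,\w V)}(\varphi)$ with $\varphi$ a global section of $\Fil^0\mEnd(V)_{\mathcal I}$; the compatibility conditions with $\w\nabla$ and $\w\Fil$ reduce, via Lemma~\ref{CanoInj_Fil} and the Leibniz rule, to $\oln^{\End}\varphi=0$, identifying the automorphism group with $\mathbb H^0\big(\Fil^0\DR(\mEnd(V,\nabla)_{\mathcal I})\big)$.

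The hardest part will not be any single calculation but the bookkeeping that ties them together: tracking the sign conventions in the Čech-de Rham double complex so that the cochains I extract genuinely assemble into a total cocycle, consistently applying the canonical injections $\iota_{(\w V_i,\w V_j)}$ so that the intertwining identity of Lemma~\ref{CanoInj}(2) lets the local automorphisms $f_{ij}$ be freely conjugated across different choices of $(\w V_i,\w V_j)$, and verifying in each case that the failure lives in the correct filtration step (which relies on Lemma~\ref{main_lemma}, Lemma~\ref{diff_conn}, and Griffith's transversality). With those verifications in place, well-definedness and independence of choices follow by routine Čech manipulations.
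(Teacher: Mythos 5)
There is a genuine gap in your treatment of part (1): you take the third component of the obstruction $2$-cocycle to be zero and assert $\oln^{\End}b=0$, but the local connections $\w\nabla_i$ supplied by Lemma~\ref{local_lifting_(V,Fil)}(3) are only Griffith-transverse lifts and are \emph{not} integrable in general (the paper stresses this in the remark after Lemma~\ref{local_data_(V,nabla,Fil)}); their curvatures $\w\nabla_i\circ\w\nabla_i$ lie in $\mathcal I\cdot\bigl(\Fil^{-2}\mEnd(\w V_i)\otimes\Omega^2_{\w X/\w S}(\log\w D)\bigr)$ but do not vanish, and one cannot in general arrange flat local lifts, since even on an affine the relevant $\mathbb H^2$ of the complex $\Fil^0\DR(\mEnd(V,\nabla)_\mathcal I)$ is the second cohomology of its global sections and need not vanish. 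Concretely, with your $b_{ij}$ one computes (as in the paper's check 1c))
\[
\nabla_{(\w\nabla_i,\w\nabla_j)}\bigl(\w\nabla_j\circ f_{ij}-f_{ij}\circ\w\nabla_i\bigr)=\w\nabla_j^{\,2}\circ f_{ij}-f_{ij}\circ\w\nabla_i^{\,2},
\]
so $\oln^{\End}b=\delta(c)$ with $c_i=\iota^{-1}_{ii}(\w\nabla_i\circ\w\nabla_i)$, not $0$. The correct cocycle is the full triple used in the paper,
\[
\Bigl(-\bigl(\iota^{-1}_{ik}(f_{jk}\circ f_{ij}-f_{ik})\bigr),\ \bigl(\iota^{-1}_{ij}(\w\nabla_j\circ f_{ij}-f_{ij}\circ\w\nabla_i)\bigr),\ \bigl(\iota^{-1}_{ii}(\w\nabla_i\circ\w\nabla_i)\bigr)\Bigr),
\]
whose last cocycle condition is the Bianchi identity $\w\nabla_{ii}(\w\nabla_i\circ\w\nabla_i)=0$. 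Your triple $(-a,b,0)$ is simply not a cocycle unless the local lifts happen to be flat.

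This omission also breaks the converse direction of (1): when the class vanishes, the coboundary data $(\Delta f_{ij},\Delta\omega_i)$ must be used not only to correct the gluing maps but also to replace $\w\nabla_i$ by $\w\nabla_i'=\w\nabla_i-\Delta\omega_i$ so that $\w\nabla_i'\circ\w\nabla_i'=0$; it is exactly the curvature component of the cocycle that guarantees this, and without it the glued connection need not be integrable, i.e.\ you do not obtain a filtered \emph{de Rham} bundle. Parts (2) and (3) of your argument are fine (there both connections are genuine integrable lifts, so the analogue of the offending identity does hold, matching Lemmas~\ref{1-cocycle_(V,nabla,Fil)} and~\ref{0-cocycle_(V,nabla,Fil)}), and your variants $f_{jk}\circ f_{ij}\circ f_{ik}^{-1}-\mathrm{id}$ and $\w\nabla_j-f_{ij}\circ\w\nabla_i\circ f_{ij}^{-1}$ agree with the paper's cochains up to composition with maps congruent to the identity modulo $\mathcal I$, which is harmless by Lemma~\ref{CanoInj}(2). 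But part (1) needs the curvature term restored throughout: in the definition of the class, in the cocycle verification, in the independence-of-choices argument, and in the gluing step.
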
 

In particular, when $\Fil$ is the trivial filtration, Theorem~\ref{deformation_(V,nabla,Fil)} implies the classical result about deforming integrable connections.
\begin{prop}\label{deformation_(V,nabla)} Let $(V,\nabla)$ be a (logarithmic) de Rham bundle over $(X,D)/S$. Then
	\begin{itemize}
		\item[1).] the obstruction to lifting $(V,\nabla)$ to a de Rham bundle over $(\w X,\w D)/\w S$ lies in $\mathbb H^2\big(\DR(\mEnd(V,\nabla)_\mathcal I)\big)$;
		\item[2).] if $(V,\nabla)$ has lifting $(\widetilde{V},\widetilde{\nabla})$, then the lifting set is an $\mathbb H^1\big(\DR(\mEnd(V,\nabla)_\mathcal I)\big)$-torsor;
		\item[3).] the infinitesimal automorphism group of $(\widetilde{V},\widetilde{\nabla})$ over $(V,\nabla)$ is $\mathbb H^0\big(\DR(\mEnd(V,\nabla)_\mathcal I)\big)$.
	\end{itemize}
\end{prop}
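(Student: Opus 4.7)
The plan is to deduce Proposition \ref{deformation_(V,nabla)} from Theorem \ref{deformation_(V,nabla,Fil)} by equipping $(V,\nabla)$ with the \emph{trivial filtration} (as suggested by the footnote at the start of Section 3). Concretely, define $\Fil_{\mathrm{triv}}$ on $V$ by $\Fil^{\ell}_{\mathrm{triv}}V=V$ for $\ell\le 0$ and $\Fil^{\ell}_{\mathrm{triv}}V=0$ for $\ell\ge 1$. Griffith transversality is automatic since $\nabla(V)\subset V\otimes \Omega^1_{X/S}(\log D)=\Fil^{-1}_{\mathrm{triv}}V\otimes\Omega^1_{X/S}(\log D)$, so $(V,\nabla,\Fil_{\mathrm{triv}})$ is a filtered de Rham bundle.

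Next I would show that deforming $(V,\nabla)$ and deforming $(V,\nabla,\Fil_{\mathrm{triv}})$ are equivalent problems. On any lifting $(\w V,\w\nabla)$ of $(V,\nabla)$ the trivial filtration $\w\Fil^{\ell}\w V=\w V$ for $\ell\le 0$ and $=0$ for $\ell\ge 1$ is the unique lift of $\Fil_{\mathrm{triv}}$: uniqueness is clear since no other choice is possible given the ranks, and it is automatically preserved by any morphism of liftings of $(V,\nabla)$. Hence the set of liftings (resp.\ automorphism group, resp.\ obstruction class) of $(V,\nabla)$ coincides with that of $(V,\nabla,\Fil_{\mathrm{triv}})$.

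The remaining step is to compute $\Fil^0\DR(\mEnd(V,\nabla)_{\mathcal I})$ in this situation. Using the definition $\Fil^{\ell}_{(\Fil_1,\Fil_2)}\mHom(V_1,V_2)=\sum_{\ell_1}(V_1/\Fil_1^{\ell_1}V_1)^{\vee}\otimes\Fil_2^{\ell_1+\ell-1}V_2$ from \eqref{Hom_Fil} applied to $(V,\Fil_{\mathrm{triv}})$, only the summand $\ell_1=1$ survives and yields $V^{\vee}\otimes\Fil^{\ell}_{\mathrm{triv}}V$. Thus $\Fil^{\ell}_{(\Fil_{\mathrm{triv}},\Fil_{\mathrm{triv}})}\mEnd(V)=\mEnd(V)$ for $\ell\le 0$ and $=0$ for $\ell\ge 1$. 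Plugging into the complex $\Fil^0\DR(\mEnd(V,\nabla)_{\mathcal I})$ in \eqref{complex_(V,nabla,Fil)}, every term $\Fil^{-i}\mEnd(V)_{\mathcal I}\otimes\Omega^i_{X/S}(\log D)$ (with $i\ge 0$) equals $\mEnd(V)_{\mathcal I}\otimes\Omega^i_{X/S}(\log D)$, so the filtered complex $\Fil^0\DR(\mEnd(V,\nabla)_{\mathcal I})$ coincides with the full complex $\DR(\mEnd(V,\nabla)_{\mathcal I})$.

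Combining these observations with Theorem \ref{deformation_(V,nabla,Fil)} applied to $(V,\nabla,\Fil_{\mathrm{triv}})$ gives all three assertions. There is no real obstacle here; the only point worth being careful about is the bookkeeping in step three, namely that the trivial filtration on $V$ induces the trivial (non-decreasing only at $\ell=0$) filtration on $\mEnd(V)$, so that the jumps in $\Fil^{-i}\mEnd(V)$ do not spuriously cut off terms of the complex.
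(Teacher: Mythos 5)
Your proposal is correct and matches the paper's own route: the paper obtains this proposition precisely as the special case of Theorem~\ref{deformation_(V,nabla,Fil)} for the trivial filtration, and your verification that the trivial filtration lifts uniquely and that $\Fil^0\DR(\mEnd(V,\nabla)_{\mathcal I})=\DR(\mEnd(V,\nabla)_{\mathcal I})$ just spells out the details the paper leaves implicit.
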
 
The following easily follows from the earlier sections and is the key we need to define an obstruction class.
\begin{lem} \label{local_data_(V,nabla,Fil)} Let $(V,\nabla,\Fil)$ be a filtered vector bundle together with an connection over $(X,D)/S$ satisfying Griffith transversality. 
	\begin{itemize}
		\item [(1).] there exists an open affine covering $\{U_i\}_{i\in I}$ of $X$ such that $\Fil^\ell V(U_i)$ is free over $\mathcal O_X(U_i)$ for each $i\in I$ and each $\ell\in \mathbb Z$;
		\item [(2).] the restriction $(V,\Fil)\mid_{U_i}$ extends to a filtered vector bundle $(\w V_i,\w\Fil_i)$ over $\w U_i$ with $\w\Fil_i^\ell\w V_i(\w U_i)$ free over $\mathcal O_{\w X}(\w U_i)$ for each $i\in I$ and each  $\ell\in \mathbb Z$;
		\item [(3).] for any two $i,j\in I$, there exists a local isomorphism $f_{ij}\colon (\w V_i,\w\Fil_i)\mid_{\w U_{ij}}\rightarrow (\w V_j,\w\Fil_j)\mid_{\w U_{ij}}$ lifts the identity $\mathrm{id}_{V\mid_{U_{ij}}}$;
		\item [(4).] the $\nabla\mid_{U_i}$ extends to a connection $\w{\nabla}_i$ on $\w V_i$ which satisfies Griffith transversality with respect to the filtration $\w\Fil_i$.
	\end{itemize}
	We call a quadruple $\big(\{U_i\}_i,\{\w V_i,\w\Fil_i\}_i,\{f_{ij}\}_{ij},\{\w{\nabla}_i\}_i\big)$ a \emph{system of local data} for $(V,\nabla,\Fil)$. 
\end{lem}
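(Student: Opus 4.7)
The statement is an assembly lemma: each of the four items has already been essentially proved in the preliminaries, so the proof is largely a matter of correctly restricting, applying, and gluing the earlier results.

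First I would establish the existence of the open cover in part (1) by a direct appeal to Lemma~\ref{local freeness_(V,Fil)}: choose an open affine covering $\{U_i\}_{i\in I}$ of $X$ that simultaneously trivializes every nonzero $\Fil^\ell V$ (only finitely many indices $\ell$ contribute, so this is possible). Since the square-zero thickening $X\hookrightarrow \w X$ is a homeomorphism on underlying topological spaces, the covering $\{U_i\}_{i\in I}$ lifts canonically to an open affine covering $\{\w U_i\}_{i\in I}$ of $\w X$ with $\w U_i\cap X = U_i$, and each $\w U_i$ is affine because $U_i$ is.

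For part (2), I fix an index $i\in I$ and apply Lemma~\ref{local_lifting_(V,Fil)}(1) to the restriction $V|_{U_i}$, obtaining a vector bundle lift $\w V_i$ on $\w U_i$; then Lemma~\ref{local_lifting_(V,Fil)}(2) lifts $\Fil|_{U_i}$ to a filtration $\w \Fil_i$ on $\w V_i$. The proof of Lemma~\ref{local_lifting_(V,Fil)}(2) constructs $\w \Fil_i$ from an adapted basis via a lift to $\w V_i$, so each $\w \Fil_i^\ell \w V_i$ is free over $\mathcal O_{\w X}(\w U_i)$ with the ranks matching those of $\Fil^\ell V(U_i)$ over $\mathcal O_X(U_i)$. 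For part (4), I then apply Lemma~\ref{local_lifting_(V,Fil)}(3) to the data $(\w V_i,\w \Fil_i)$ together with $\nabla|_{U_i}$ to produce a connection $\w \nabla_i$ on $\w V_i$ lifting $\nabla|_{U_i}$ and satisfying Griffith transversality with respect to $\w \Fil_i$.

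Part (3) is the last step: on each double intersection $\w U_{ij}$, the two filtered vector bundles $(\w V_i,\w \Fil_i)|_{\w U_{ij}}$ and $(\w V_j,\w \Fil_j)|_{\w U_{ij}}$ are both liftings of the same object $(V,\Fil)|_{U_{ij}}$, to which the hypotheses of Lemma~\ref{local_lifting_(V,Fil)}(4) apply since each $\Fil^\ell V|_{U_{ij}}$ remains free (as $U_{ij}\subset U_i$ and freeness localizes). Lemma~\ref{local_lifting_(V,Fil)}(4) then supplies the required isomorphism $f_{ij}\colon (\w V_i,\w \Fil_i)|_{\w U_{ij}}\xrightarrow{\sim} (\w V_j,\w \Fil_j)|_{\w U_{ij}}$ lifting $\mathrm{id}_{V|_{U_{ij}}}$.

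There is no genuine obstacle here; the only bookkeeping point worth emphasizing is the compatibility between the open cover of $X$ and the induced cover of $\w X$ (which is automatic because of the nil-immersion property of $X\hookrightarrow \w X$) and the fact that localizing to $U_{ij}$ preserves the freeness of the filtration pieces, so that Lemma~\ref{local_lifting_(V,Fil)} can be reapplied on intersections. Note that the isomorphisms $f_{ij}$ are not required to satisfy any cocycle condition in the statement — the failure of the cocycle condition will later enter as part of the obstruction class constructed in Theorem~\ref{deformation_(V,nabla,Fil)}.
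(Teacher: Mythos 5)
Your proposal is correct and follows exactly the route the paper takes: its proof is a one-line citation of Lemma~\ref{local freeness_(V,Fil)} for the cover and Lemma~\ref{local_lifting_(V,Fil)} (parts (1)--(4)) for the local liftings, filtrations, Griffith-transverse connections, and transition isomorphisms, which is precisely what you spell out. Your added bookkeeping remarks (the cover of $\w X$ induced by the nil-immersion, freeness surviving restriction to $U_{ij}$, and no cocycle condition being asserted) are accurate but not a different argument.
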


\begin{proof}[Proof of Lemma~\ref{local_data_(V,nabla,Fil)}]
The lemma follows Lemma~\ref{local freeness_(V,Fil)} and Lemma~\ref{local_lifting_(V,Fil)}.
\end{proof}
\begin{rmk}
	In (4) of Lemma~\ref{local_data_(V,nabla,Fil)}, we do not require the local extension $\w\nabla\mid_{\w U_i}$ to be integrable.
\end{rmk}
Let $(V,\nabla,\Fil)$ be a filtered de Rham bundle and $\big(\{U_i\}_i,\{\w V_i,\w\Fil_i\}_i,\{f_{ij}\}_{ij},\{\w{\nabla}_i\}_i\big)$ be the local data outputted by Lemma~\ref{local_data_(V,nabla,Fil)}. Since $f_{ij}$ preserves local filtration and $\w\nabla_i$ satisfies Griffith's transversality, by Lemma~\ref{main_lemma} the element 
\begin{equation}\label{obs:c(v,nabla,Fil)}
\w c(V,\nabla,\Fil):= \Big(-(f_{jk}\circ f_{ij}-f_{ik})_{(i,j,k)}, (\w \nabla_j\circ f_{ij} - f_{ij}\circ\w\nabla_i)_{(i,j)}, (\w\nabla_i\circ\w\nabla_i)_{i}\Big)
\end{equation}
is contained in 
\[
\prod_{(i,j,k)\in I^3} 
\left(\w\Fil^0_{ik}\mHom(\w V_i,\w V_k)\right)(\w U_{ijk})
\times 
\prod_{(i,j)\in I^2}
\left(\w\Fil_{ij}^{-1}\mHom(\w V_i,\w V_j)\otimes\Omega^1_{\w X/\w S}(\log \w D)\right)(\w U_{ij})
\times 
\prod_{i\in I} \left(\w\Fil_{ii}^{-2}\mEnd(\w V_i)\otimes\Omega^2_{\w X/\w S}(\log \w D)\right)(\w U_{i}),
\]
where $\w\Fil_{ij} = \Fil_{(\w\Fil_i,\w\Fil_j)}$ is the filtration on the vector bundle $\mHom(\w V_i,\w V_j)$ over $\w U_{ij}$.
Since  $(V,\nabla,\Fil)$ is a globally defined filtered de Rham bundle, $c(V,\nabla,\Fil)=0 \mod \mathcal I$. One gets an element
\[c(V,\nabla,\Fil) := \Big(-\big(\iota^{-1}_{ik}(f_{jk}\circ f_{ij}-f_{ik})\big)_{(i,j,k)}, \big(\iota^{-1}_{ij}((\w \nabla_j\circ f_{ij} - f_{ij}\circ\w\nabla_i)\big)_{(i,j)}, \big(\iota^{-1}_{ii}(\w\nabla_i\circ\w\nabla_i)\big)_{i}\Big)\]
in
\[
\prod_{(i,j,k)\in I^3}
\Fil^0\mEnd(V)_{\mathcal I}({U_{ijk}}) 
\times 
\prod_{(i,j)\in I^2}   
\left(\Fil^{-1}\mEnd(V)_{\mathcal I} \otimes\Omega^1_{X/S}(\log D)\right)({U_{ij}})
\times 
\prod_{i\in I}
\left(\Fil^{-2}\mEnd(V)_{\mathcal I} \otimes\Omega^2_{X/S}(\log D)\right)({U_{i}}).\]
where the notations $\iota_{ij} :=\iota_{\w V_i,\w V_j}$ are defined in Lemma~\ref{CanoInj}.  
\begin{lem}\label{2-cocycle_(V,nabla,Fil)} Let $(V,\nabla,\Fil)$ be a filtered de Rham bundle and $\big(\{U_i\}_i,\{(\w V_i,\w\Fil_i)\}_i,\{f_{ij}\}_{ij},\{\w{\nabla}_i\}_i\big)$ be a system of local data, whose existence is guaranteed by Lemma~\ref{local_data_(V,nabla,Fil)}.
	\begin{itemize}
		\item [(1).] The element $c(V,\nabla,\Fil)$ defined in (\ref{obs:c(v,nabla,Fil)}) is a $2$-cocycle in the Cech resolution of the  complex $\Fil^0 \DR(\mEnd(V,\nabla)_\mathcal I)$ with respect to the open covering $\mathcal U = \{U_i\}_{i\in I}$ of $X$.
		\item [(2).] The class $[c(V,\nabla,\Fil)] \in \mathbb H^2(\Fil^0 \DR(\mEnd(V,\nabla)_\mathcal I))$ does not depend on the choice of the system of local data from Lemma~\ref{local_data_(V,nabla,Fil)}.
	\end{itemize}
	
\end{lem}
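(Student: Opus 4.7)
For part (1), by Lemma~\ref{cocycle & coboundary}(1) the 2-cocycle condition amounts to the four identities $\delta a = 0$, $\oln^\End a = \delta b$, $\oln^\End b = \delta c$, $\oln^\End c = 0$, together with the memberships $a \in \Fil^0\mEnd(V)_{\mathcal I}$, $b \in \Fil^{-1}\mEnd(V)_{\mathcal I}\otimes \Omega^1_{X/S}(\log D)$, and $c \in \Fil^{-2}\mEnd(V)_{\mathcal I}\otimes \Omega^2_{X/S}(\log D)$. The plan is to perform each computation in the ambient sheaf $\mHom(\w V_i,\w V_j)\otimes \Omega^\bullet_{\w X/\w S}(\log \w D)$ and then translate through $\iota^{-1}_{\w V_i,\w V_j}$ using Lemma~\ref{CanoInj}: (i) $\iota^{-1}_{ij}\circ \nabla_{(\w\nabla_i,\w\nabla_j)} = \oln^\End\circ \iota^{-1}_{ij}$, and (ii) $\iota_{ik}\circ \iota_{jk}^{-1} = (-\circ f_{ij})$, $\iota_{ik}\circ \iota_{ij}^{-1} = (f_{jk}\circ -)$. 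Filtered membership then follows from $f_{ij}$ preserving filtrations (Lemma~\ref{main_lemma}), Griffith transversality for each $\w\nabla_i$, and Lemma~\ref{CanoInj_Fil}.

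The four identities themselves are short computations. The identity $\delta a = 0$ is associativity of composition modulo $\mathcal I^2 = 0$: after transporting $a_{jkl},a_{ikl},a_{ijl},a_{ijk}$ into the common ambient sheaf $\mHom(\w V_i,\w V_l)$ via (ii), the eight terms cancel in pairs. The identity $\oln^\End a = \delta b$ follows by expanding $\nabla_{(\w\nabla_i,\w\nabla_k)}(f_{jk}\circ f_{ij} - f_{ik})$ with Leibniz to obtain $b_{jk}\circ f_{ij} + f_{jk}\circ b_{ij} - b_{ik}$ and translating via (i)--(ii). The identity $\oln^\End b = \delta c$ uses the degree-one Leibniz rule $\nabla_{(\w\nabla_i,\w\nabla_j)}(b_{ij}) = \w\nabla_j\circ b_{ij} + b_{ij}\circ \w\nabla_i$, which telescopes to $\w\nabla_j^2\circ f_{ij} - f_{ij}\circ \w\nabla_i^2 = c_j\circ f_{ij} - f_{ij}\circ c_i$. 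Finally, $\oln^\End c = 0$ is the identity $\w\nabla_i\circ \w\nabla_i^2 = \w\nabla_i^3 = \w\nabla_i^2\circ \w\nabla_i$, valid regardless of whether $\w\nabla_i$ is integrable.

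For part (2), I would compare two systems of local data and show the resulting 2-cocycles are cohomologous. Restricting to a common refinement of the two open covers preserves the Cech cohomology class, so I may assume both systems are defined on the same cover $\{U_i\}$, and then change one piece of data at a time. When only the connections differ, $\eta_i := \w\nabla'_i - \w\nabla_i$ vanishes modulo $\mathcal I$ and defines $\bar\eta_i \in \Fil^{-1}\mEnd(V)_{\mathcal I}\otimes \Omega^1_{X/S}(\log D)$ via $\iota^{-1}_{ii}$; using $\mathcal I^2 = 0$ one checks that the cocycle changes by $(0,\delta\bar\eta,\oln^\End\bar\eta) = d^{\mathrm{Tot}}(0,\bar\eta)$. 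When only the transition maps differ, $\phi_{ij} := f'_{ij} - f_{ij}$ gives $\bar\phi_{ij}\in \Fil^0\mEnd(V)_{\mathcal I}$ and the change is $(-\delta\bar\phi,\oln^\End\bar\phi,0) = d^{\mathrm{Tot}}(-\bar\phi,0)$. Changing the local liftings $(\w V_i,\w \Fil_i)$ themselves is absorbed by Lemma~\ref{local_lifting_(V,Fil)}(4), which supplies isomorphisms over $\w U_i$ lifting the identity; conjugating by them reduces to the two cases just treated.

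The main obstacle I anticipate is the bookkeeping of signs and of the four distinct $\iota_{\w V_i,\w V_j}$'s that appear simultaneously in $\delta a$: one must transport all four summands to the common ambient $\mHom(\w V_i,\w V_l)$ via Lemma~\ref{CanoInj}(2) before the telescoping becomes visible. Once this step is organized cleanly, the remaining identities are routine Leibniz expansions in which quadratic-in-$\mathcal I$ terms automatically vanish.
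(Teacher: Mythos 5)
Your part (1) is essentially the paper's own proof: the same reduction via Lemma~\ref{cocycle & coboundary} to the four identities, the same transport into the ambient sheaves $\mHom(\w V_i,\w V_j)\otimes\Omega^\bullet_{\w X/\w S}(\log\w D)$ using the rules of Lemma~\ref{CanoInj}, the same Leibniz expansions, and the same appeal to Lemma~\ref{main_lemma} and Lemma~\ref{CanoInj_Fil} for the filtered memberships. Two small remarks: the telescoping in $\delta a=0$ is an exact cancellation and does not actually use $\mathcal I^2=0$ (your parenthetical is harmless but unnecessary), and your sign convention $\nabla_{(\w\nabla_i,\w\nabla_j)}(\omega)=\w\nabla_j\circ\omega+\omega\circ\w\nabla_i$ for $1$-form--valued $\omega$ is the one consistent with the paper's definition and is what yields $\w\nabla_j^2\circ f_{ij}-f_{ij}\circ\w\nabla_i^2$; the minus sign displayed at that point in the paper's computation is a typo. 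For part (2) you take a genuinely different route. The paper forms the union covering indexed by $K=I\sqcup J$, extends the two given systems to a single system of local data on $K$ (the mixed transition maps exist by Lemma~\ref{local_lifting_(V,Fil)}(4)), and notes that the two original cocycles are the two restrictions $\sigma_1^*(c_K)$, $\sigma_2^*(c_K)$, hence define the same class in $\varinjlim_{\mathcal U}H^2$; no cobounding cochain is ever exhibited. You instead pass to a common refinement and change one piece of data at a time, writing down the explicit $1$-cochains $(0,\bar\eta)$ and $(-\bar\phi,0)$ whose total differentials realize the change, and absorbing a change of the local liftings $(\w V_i,\w\Fil_i)$ by conjugating with the isomorphisms of Lemma~\ref{local_lifting_(V,Fil)}(4), which leaves the cocycle unchanged by Lemma~\ref{CanoInj}(2). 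Both arguments are correct; yours is longer and more sign-sensitive but yields the cobounding cochain explicitly, while the paper's is shorter at the cost of working in the colimit over coverings rather than on a fixed cover.
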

\begin{proof}
We denote $\mathcal F^k=\Fil^{-k}\mEnd(V)_{\mathcal I} \otimes \Omega^k_{X/S}(\log D)$ for all $k\geq 0$. Then the complex 
$\Fil^0 \DR(\mEnd(V,\nabla)_\mathcal I)$ defined in \ref{complex_(V,nabla,Fil)} may be rewritten as
\[0\rightarrow \mathcal F^0 \xrightarrow{\oln^\End}\mathcal F^1 \xrightarrow{\oln^\End}\mathcal F^2\xrightarrow{\oln^\End} \mathcal F^3 \xrightarrow{\oln^\End} \cdots  \]
For any $\iota=(i_1,\cdots,i_s)\in I^s$, set $U_{\iota} = \cap_{\ell =1}^s U_{i_\ell}$ and 
\[C^p(\mathcal U,\mathcal F^k) = \prod_{\iota\in I^{p+1}} \mathcal F^k(U_{\iota}).\]
Recall that the Cech resolution of the complex $\Fil^0 \DR(\mEnd(V,\nabla)_\mathcal I)$ with respect to the open covering $\mathcal U$ of $X$ is the total complex of the following double complex
\begin{equation}
\xymatrix{
   C^0(\mathcal U,\mathcal F^0)
   \ar[d]^{\delta} \ar[r]^-{\oln^\End} 
&  C^0(\mathcal U,\mathcal F^1)
   \ar[d]^{\delta} \ar[r]^-{\oln^\End} 
&  \red{ C^0(\mathcal U,\mathcal F^2) }
   \ar[d]^{\delta} \ar[r]^-{\oln^\End} 
&  C^0(\mathcal U,\mathcal F^3)
   \ar[d]^{\delta} \ar[r]^-{\oln^\End} 
&  \cdots  \\
   C^1(\mathcal U,\mathcal F^0)
\ar[d]^{\delta} \ar[r]^-{-\oln^\End} 
& \red{ C^1(\mathcal U,\mathcal F^1) }
\ar[d]^{\delta} \ar[r]^-{-\oln^\End} 
&  C^1(\mathcal U,\mathcal F^2)
\ar[d]^{\delta} \ar[r]^-{-\oln^\End} 
&  C^1(\mathcal U,\mathcal F^3)
\ar[d]^{\delta} \ar[r]^-{-\oln^\End} 
&  \cdots  \\
   \red{ C^2(\mathcal U,\mathcal F^0) }
\ar[d]^{\delta} \ar[r]^-{\oln^\End} 
&  C^2(\mathcal U,\mathcal F^1)
\ar[d]^{\delta} \ar[r]^-{\oln^\End} 
&  C^2(\mathcal U,\mathcal F^2)
\ar[d]^{\delta} \ar[r]^-{\oln^\End} 
&  C^2(\mathcal U,\mathcal F^3)
\ar[d]^{\delta} \ar[r]^-{\oln^\End} 
&  \cdots  \\
   C^3(\mathcal U,\mathcal F^0)
\ar[d]^{\delta} \ar[r]^-{-\oln^\End} 
&  C^3(\mathcal U,\mathcal F^1)
\ar[d]^{\delta} \ar[r]^-{-\oln^\End} 
&  C^3(\mathcal U,\mathcal F^2)
\ar[d]^{\delta} \ar[r]^-{-\oln^\End} 
&  C^3(\mathcal U,\mathcal F^3)
\ar[d]^{\delta} \ar[r]^-{-\oln^\End} 
&  \cdots  \\
   \vdots &  \vdots & \vdots & \vdots &  \\
}
\end{equation}
By lemma~\ref{cocycle & coboundary}, to show that
\[c(V,\nabla,\Fil)= \Big(
-\big(\iota^{-1}_{ik}(f_{jk}\circ f_{ij}-f_{ik})\big)_{(i,j,k)},
 \big(\iota^{-1}_{ij}((\w \nabla_j\circ f_{ij} - f_{ij}\circ\w\nabla_i)\big)_{(i,j)},
 \big(\iota^{-1}_{ii}(\w\nabla_i\circ\w\nabla_i)\big)_{i}\Big)\] 
forms a cocycle, one needs to check
\begin{equation*}
\left\{
\begin{array}{lclll}
0 & = & \delta\left( \big(\iota^{-1}_{ik}(f_{jk}\circ f_{ij}-f_{ik})\big)_{(i,j,k)} \right) & \in  C^3(\mathcal U,\mathcal F^0)\\
\oln^\End \left( \big(\iota^{-1}_{ik}(f_{jk}\circ f_{ij}-f_{ik})\big)_{(i,j,k)} \right) & = & \delta\left(\big(\iota^{-1}_{ij}((\w \nabla_j\circ f_{ij} - f_{ij}\circ\w\nabla_i)\big)_{(i,j)}\right)  & \in  C^2(\mathcal U,\mathcal F^1)\\
\oln^\End\left(\big(\iota^{-1}_{ij}((\w \nabla_j\circ f_{ij} - f_{ij}\circ\w\nabla_i)\big)_{(i,j)}\right) & = &\delta\left(\big(\iota^{-1}_{ii}(\w\nabla_i\circ\w\nabla_i)\big)_{i}\Big)\right)  & \in  C^1(\mathcal U,\mathcal F^2) \\
\oln^\End\left(\big(\iota^{-1}_{ii}(\w\nabla_i\circ\w\nabla_i)\big)_{i}\Big)\right)  &= & 0 & \in   C^0(\mathcal U,\mathcal F^3)\\
\end{array}
\right.
\end{equation*}
This is equivalent to check
\begin{itemize}
	\item[1a).] $\iota_{jl}^{-1} (f_{kl}\circ f_{jk}-f_{jl}) 
	- \iota_{il}^{-1} (f_{kl}\circ f_{ik}-f_{il}) 
	+ \iota_{il}^{-1} (f_{jl}\circ f_{ij}-f_{il}) 
	- \iota_{jk}^{-1} (f_{jk}\circ f_{ij}-f_{ik}) =0$ for each $(i,j,k,l)\in I^4$.
	\item[1b).] 
	$\iota_{ik}^{-1}(\w\nabla_{ik}(f_{jk}\circ f_{ij}-f_{ik})) = 
	\iota_{jk}^{-1}(\w\nabla_k\circ f_{jk}- f_{jk} \circ \w\nabla_j) - 
	\iota_{ik}^{-1}(\w\nabla_k\circ f_{ik} - f_{ik}\circ \w\nabla_i) +
	\iota_{ij}^{-1}(\w\nabla_j\circ f_{ij} - f_{ij}\circ \w\nabla_i)$ 
	for each $(i,j,k)\in I^3$.
	\item[1c).] $\iota_{ij}^{-1}(\w\nabla_{ij}(\w \nabla_j\circ f_{ij} - f_{ij}\circ\w\nabla_i))
	=
	\iota_{jj}^{-1}(\w\nabla_j\circ\w\nabla_j) -
	\iota_{ii}^{-1}(\w\nabla_i\circ\w\nabla_i)$ for each $(i,j)\in I^2$; and
	\item[1d).] $\w\nabla_{ii}(\w\nabla_i\circ\w\nabla_i) = 0$.
\end{itemize}
where $\w\nabla_{ij}:=\nabla_{(\w\nabla_i,\w\nabla_j)}$ be the connection on $\mHom(\w V_i,\w V_j)\mid_{\w U_{ij}}$.

1a) follows Lemma~\ref{CanoInj} and 
\begin{equation*}
(f_{kl}\circ f_{jk}-f_{jl})\circ f_{ij} 
- (f_{kl}\circ f_{ik}-f_{il}) 
+ (f_{jl}\circ f_{ij}-f_{il}) 
-  f_{kl} \circ (f_{jk}\circ f_{ij}-f_{ik})
= 0
\end{equation*}
This is because that Lemma 2.10 implies $-\circ f_{ij} = \iota_{ik} \circ \iota^{-1}_{ij}$ and $f_{jk}\circ - = \iota_{ik} \circ \iota^{-1}_{jk}$. 1a) follows the injectivity of $\iota_{il}$ and 
\begin{equation*}
\begin{split}
0
&=(f_{kl}\circ f_{jk}-f_{jl})\circ f_{ij} 
- (f_{kl}\circ f_{ik}-f_{il}) 
+ (f_{jl}\circ f_{ij}-f_{il}) 
-  f_{kl} \circ (f_{jk}\circ f_{ij}-f_{ik}) \\
&\overset {\ref{CanoInj}}{=} 
  \iota_{il} \circ \iota^{-1}_{jl} (f_{kl}\circ f_{jk}-f_{jl})
- \iota_{il} \circ \iota^{-1}_{il} (f_{kl}\circ f_{ik}-f_{il}) 
+ \iota_{il} \circ \iota^{-1}_{il} (f_{jl}\circ f_{ij}-f_{il}) 
- \iota_{il} \circ \iota^{-1}_{ik} (f_{jk}\circ f_{ij}-f_{ik})\\
& = \iota_{il}\circ\left( 
  \iota^{-1}_{jl} (f_{kl}\circ f_{jk}-f_{jl})
- \iota^{-1}_{il} (f_{kl}\circ f_{ik}-f_{il}) 
+ \iota^{-1}_{il} (f_{jl}\circ f_{ij}-f_{il}) 
- \iota^{-1}_{ik} (f_{jk}\circ f_{ij}-f_{ik})\right)\\
\end{split}
\end{equation*}
1b) follows Lemma~\ref{CanoInj} and 	
\begin{equation*}
\begin{split}	
    \w\nabla_{ik}(f_{jk}\circ f_{ij}-f_{ik}) 
& := \w\nabla_k\circ(f_{jk}\circ f_{ij}-f_{ik}) 
    - (f_{jk}\circ f_{ij}-f_{ik}) \circ \w\nabla_i\\
& = (\w\nabla_k\circ f_{jk}- f_{jk} \circ \w\nabla_j) \circ f_{ij}  
- (\w\nabla_k\circ f_{ik} - f_{ik}\circ \w\nabla_i)
+   f_{jk}\circ (\w\nabla_j\circ f_{ij} - f_{ij}\circ \w\nabla_i)\\ 
\end{split}
\end{equation*}
 
1c) follows Lemma~\ref{CanoInj} and 
\begin{equation*}
\begin{split}	
\w\nabla_{ij}(\w \nabla_j\circ f_{ij} - f_{ij}\circ\w\nabla_i) 
& := \w \nabla_j \circ (\w \nabla_j\circ f_{ij} - f_{ij}\circ\w\nabla_i) - (\w \nabla_j\circ f_{ij} - f_{ij}\circ\w\nabla_i)  \circ  \w \nabla_i \\
& = \w \nabla_j^2 \circ f_{ij} - f_{ij}\circ \w \nabla_i^2 \\ 
\end{split}
\end{equation*}
1d) follow that  $\w\nabla_{ii}(\w\nabla_i\circ\w\nabla_i)=\w\nabla_i\circ(\w\nabla_i\circ\w\nabla_i) - (\w\nabla_i\circ\w\nabla_i)\circ\w\nabla_i = 0$.

Let $\big(\{U_i\}_{i\in J},\{(\w V_i,\w\Fil_i)\}_{i\in J},\{f_{ij}\}_{i,j\in J},\{\w{\nabla}_i\}_{i\in J}\big)$ be another system of local data. Consider the union covering $\{U_i\}_{i\in K}$, where $K=I\sqcup J$.  Using the natural embedding $\sigma_1\colon I \rightarrow K$ and $\sigma_2 \colon J\rightarrow K$, the coverings $\{U_i\}_{i\in I}$ and $\{ U_i\}_i\in J$ are each refinements of $\{U_{i}\}_{i\in K}$. Thus one has following diagram
\begin{equation}
\xymatrix{ 
	H^2(\mathcal U_I, (\mathcal F^\cdot,\nabla^{\End})) 
	&H^2(\mathcal U_K, (\mathcal F^\cdot,\nabla^{\End}))
	\ar[r]^{\sigma_2^*}\ar[l]_{\sigma_1^*}
	&H^2(\mathcal U_J, (\mathcal F^\cdot,\nabla^{\End}))}
\end{equation}
As in Lemma~\ref{local_data_(V,nabla,Fil)}, we extend the two systems of local data $\big(\{U_i\}_{i\in I},\{(\w V_i,\w\Fil_i)\}_{i\in I},\{f_{ij}\}_{i,j\in I},\{\w{\nabla}_i\}_{i\in I}\big)$ and $\big(\{U_i\}_{i\in J},\{(\w V_i,\w\Fil_i)\}_{i\in J},\{f_{ij}\}_{i,j\in J},\{\w{\nabla}_i\}_{i\in J}\big)$ to a system of local data $\big(\{U_i\}_{i\in K},\{(\w V_i,\w\Fil_i)\}_{i\in K},\{f_{ij}\}_{i,j\in K},\{\w{\nabla}_i\}_{i\in K}\big)$. Then
$\sigma_1^*(c_K(V,\nabla,\Fil)) = c_I(V,\nabla,\Fil)$ and
$\sigma_2^*(c_K(V,\nabla,\Fil)) = c_J(V,\nabla,\Fil)$. Thus $ c_I(V,\nabla,\Fil)$ and $ c_J(V,\nabla,\Fil)$ defines the same element in $\mathbb H^2(\mathcal F^\cdot,\oln^{\End}) := \varinjlim\limits_{\mathcal U} H^2(\mathcal U,(\mathcal F^\cdot,\oln^{\End}))$.
\end{proof}

\begin{lem}\label{local_isomorphism_(V,nabla,Fil)} Let $(\w V,\w \nabla,\w\Fil)$ and $(\w V,\w \nabla,\w\Fil)'$ be two liftings of a filtered de Rham bundle $(V,\nabla,\Fil)$. Then
	\begin{itemize}
		\item [(1).]  there exists an open affine covering  $\{U_i\}_{i\in I}$ such that $\w\Fil^\ell\w V(\w U_i)$ and $\w\Fil^\ell\w V'(\w U_i)$ are free over $\mathcal O_{\w X}(\w U_i)$ for all $i\in I$ and $\ell\in \mathbb Z$, and 
		\item [(2).] there exists a local isomorphism $f_i\colon (\w V,\w\Fil)\mid_{\w U_i} \rightarrow (\w V,\w\Fil)'\mid_{\w U_i}$, which lifts the identity $\mathrm{id}_{V\mid_{U_i}}$.
	\end{itemize}
\end{lem}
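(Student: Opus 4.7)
The statement is essentially bookkeeping and follows by direct invocation of the machinery already established in Section~2.6, with no new ingredients required. The plan is to first produce the common affine cover via the trivialization result for filtered vector bundles, and then on each open set invoke the local lifting lemma that directly compares two filtered liftings.

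For part (1), I would apply Lemma~\ref{local freeness_(V,Fil)} to the filtered vector bundle $(\w V,\w\Fil)$ viewed as a filtered vector bundle on the scheme $\w X$, producing an open affine cover $\{\w U_i'\}_{i\in I'}$ of $\w X$ on which each $\w\Fil^\ell \w V|_{\w U_i'}$ is a free $\mathcal O_{\w X}(\w U_i')$-module. Applying the same lemma to $(\w V', \w\Fil')$ yields a second cover $\{\w U_j''\}_{j\in J}$. A common affine refinement $\{\w U_i\}_{i\in I}$ then simultaneously trivializes both filtered liftings, which is exactly the conclusion of part (1). (Note that $X$ and $\w X$ share the same underlying topological space, so the induced cover $\{U_i\}_{i\in I}$ of $X$ is automatic.)

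For part (2), I would fix an index $i\in I$ and consider the restricted square-zero thickening $U_i\hookrightarrow \w U_i$. The two filtered vector bundles $(\w V,\w\Fil)|_{\w U_i}$ and $(\w V',\w\Fil')|_{\w U_i}$ are both liftings of $(V,\Fil)|_{U_i}$. Moreover, since $\w\Fil^\ell \w V|_{\w U_i}$ is free over $\mathcal O_{\w X}(\w U_i)$ and reduces modulo $\mathcal I$ to $\Fil^\ell V|_{U_i}$, the latter is free over $\mathcal O_X(U_i)$; hence the freeness hypothesis of Lemma~\ref{local_lifting_(V,Fil)}(4) is met. That lemma then directly produces an isomorphism $f_i\colon (\w V,\w\Fil)|_{\w U_i}\to (\w V',\w\Fil')|_{\w U_i}$ lifting $\mathrm{id}_{V|_{U_i}}$, as claimed. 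Observe that the statement demands only an isomorphism of filtered vector bundles lifting the identity; no intertwining of the connections $\w\nabla$ and $\w\nabla'$ is required, which is why the proof can be completed at the level of the underlying filtered bundles.

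The main point to verify is essentially nil: the lemma simply isolates the content of the previously-established local trivialization (Lemma~\ref{local freeness_(V,Fil)}) and local lifting result (Lemma~\ref{local_lifting_(V,Fil)}(4)) in a form convenient for the global Čech-theoretic obstruction arguments that follow. The only very mild subtlety is applying Lemma~\ref{local freeness_(V,Fil)} on the thickening $\w X$ rather than on $X$; this is harmless because the lemma holds for any scheme and uses only that there are finitely many non-trivial steps in the filtration.
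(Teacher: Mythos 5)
Your argument is correct and takes essentially the same route as the paper, whose proof of this lemma is exactly the combination of Lemma~\ref{local freeness_(V,Fil)} with Lemma~\ref{local_lifting_(V,Fil)}(4). The only cosmetic difference is that you trivialize the filtered pieces on $\w X$ and deduce freeness of $\Fil^\ell V$ on $X$ by reduction modulo $\mathcal I$, whereas the paper trivializes on $X$ and recovers freeness of the lifted filtration inside the proof of Lemma~\ref{local_lifting_(V,Fil)}; both are immediate and yield the same conclusion.
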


\begin{proof}
	The Lemma follows Lemma~\ref{local freeness_(V,Fil)} and Lemma~\ref{local_lifting_(V,Fil)}.
\end{proof}

Suppose there are two liftings $(\w V,\w \nabla,\w\Fil)$ and $(\w V,\w \nabla,\w\Fil)'$ of a filtered de Rham bundle $(V,\nabla,\Fil)$ and $\big(\{U_i\}_i,\{f_{i}\}_{i}\big)$ be a system of local data given as in Lemma~\ref{local_isomorphism_(V,nabla,Fil)}. Denote $\iota=\iota_{\w V,\w V'}$, which is defined in Lemma~\ref{CanoInj}. By similar reason as $c(V,\nabla,\Fil)$, one constructs an element 
\begin{equation}\label{torsor_(V,nabla,Fil)}
b((\w V,\w\nabla,\w\Fil),(\w V,\w\nabla,\w\Fil)'):=\Big(\big(\iota^{-1}(f_j-f_i)\big)_{(i,j)},\big(\iota^{-1}(\w\nabla'\circ f_{i}-f_i\circ\w\nabla)\big)_{i}\Big)
\end{equation}
in 
\[
\prod_{(i,j)\in I^2} \Fil^0\mEnd(V)_{\mathcal I}\mid_{U_{ij}} \times 
\prod_{i\in I}   \left(\Fil^{-1}\mEnd(V)_{\mathcal I} \otimes\Omega^1_{X/S}(\log D)\right) \mid_{U_{i}}.\]

\begin{lem}\label{1-cocycle_(V,nabla,Fil)} Let $(\w V,\w \nabla,\w\Fil)$ and $(\w V,\w \nabla,\w\Fil)'$ be two liftings of a filtered de Rham bundle $(V,\nabla,\Fil)$. Then
	\begin{itemize}
		\item [(1).] the element $b((\w V,\w\nabla,\w\Fil),(\w V,\w\nabla,\w\Fil)')$ defined in (\ref{torsor_(V,nabla,Fil)}) is a $1$-cocycle in the Cech resolution of the  complex $\Fil^0\DR(\mEnd(V,\nabla))_{\mathcal I}$ with respect to the open covering $\{U_i\}_{i\in I}$ of $X$. 
		\item [(2).] The class $[b((\w V,\w\nabla,\w\Fil),(\w V,\w\nabla,\w\Fil)')] \in \mathbb H^1(\Fil^0\DR(\mEnd(V,\nabla))_{\mathcal I})$ does not depend on the choice of  $\big(\{U_i\}_i,\{f_{i}\}_{i}\big)$.
	\end{itemize}
\end{lem}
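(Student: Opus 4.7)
The plan is to mirror the argument used for the $2$-cocycle $c(V,\nabla,\Fil)$ in Lemma~\ref{2-cocycle_(V,nabla,Fil)}, but one cohomological degree lower. Set $\mathcal F^k := \Fil^{-k}\mEnd(V)_{\mathcal I}\otimes \Omega^k_{X/S}(\log D)$, so that $\Fil^0\DR(\mEnd(V,\nabla)_{\mathcal I})$ is the complex $\mathcal F^0\xrightarrow{\oln^\End}\mathcal F^1\xrightarrow{\oln^\End}\mathcal F^2\xrightarrow{}\cdots$. Observe first that $f_j - f_i\colon \w V|_{\w U_{ij}}\to \w V'|_{\w U_{ij}}$ reduces to $\mathrm{id}_V-\mathrm{id}_V=0$ modulo $\mathcal I$, so by Lemma~\ref{CanoIsom} it lies in $\mathcal I\cdot\mHom(\w V,\w V')|_{\w U_{ij}}$; moreover since both $f_i,f_j$ preserve the filtrations, $f_j-f_i$ lies in $\Fil^0_{(\w\Fil,\w\Fil')}$, and by Lemma~\ref{CanoInj_Fil} $\iota^{-1}(f_j-f_i)\in \Fil^0\mEnd(V)_{\mathcal I}(U_{ij})$. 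Similarly $\w\nabla'\circ f_i-f_i\circ\w\nabla$ reduces to $\nabla-\nabla=0$ mod $\mathcal I$ and lies in $\Fil^{-1}_{(\w\Fil,\w\Fil')}\mHom(\w V,\w V')\otimes\Omega^1_{\w X/\w S}(\log \w D)$ by Griffith transversality of $\w\nabla$ and $\w\nabla'$, so its preimage under $\iota$ lies in $\mathcal F^1(U_i)$.

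For part (1), by Lemma~\ref{cocycle & coboundary} we have to verify three identities:
\begin{itemize}
\item[1a).] $\iota^{-1}(f_k-f_j)-\iota^{-1}(f_k-f_i)+\iota^{-1}(f_j-f_i)=0$ on $U_{ijk}$;
\item[1b).] $\oln^\End\bigl(\iota^{-1}(f_j-f_i)\bigr)=\iota^{-1}(\w\nabla'\circ f_j-f_j\circ\w\nabla)-\iota^{-1}(\w\nabla'\circ f_i-f_i\circ\w\nabla)$ on $U_{ij}$;
\item[1c).] $\oln^\End\bigl(\iota^{-1}(\w\nabla'\circ f_i-f_i\circ\w\nabla)\bigr)=0$ on $U_i$.
\end{itemize}
Identity 1a) is immediate from the linearity of $\iota^{-1}$. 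For 1b), apply the commutation relation $\iota\circ \oln^\End=\nabla_{(\w\nabla,\w\nabla')}\circ \iota$ from Lemma~\ref{CanoInj}(1) to the left-hand side; using $\nabla_{(\w\nabla,\w\nabla')}(f_j-f_i)=\w\nabla'\circ(f_j-f_i)-(f_j-f_i)\circ\w\nabla$ and regrouping gives exactly the right-hand side after applying $\iota^{-1}$. For 1c), unfolding $\nabla_{(\w\nabla,\w\nabla')}(\w\nabla'\circ f_i-f_i\circ\w\nabla)$ yields $\w\nabla'\circ\w\nabla'\circ f_i-f_i\circ\w\nabla\circ\w\nabla$, which vanishes since $\w\nabla$ and $\w\nabla'$ are integrable; then intertwine with $\iota^{-1}$ via Lemma~\ref{CanoInj}(1) again.

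For part (2), given a second system $\bigl(\{U_i\}_{i\in J},\{f_i'\}_{i\in J}\bigr)$, pass to the disjoint-union covering indexed by $K=I\sqcup J$ and extend both systems (possibly choosing new local isomorphisms on the new indices) to a system $\bigl(\{U_i\}_{i\in K},\{f_i\}_{i\in K}\bigr)$ lifting the identity. Naturality of the Cech construction with respect to refinements gives a diagram
\[
\xymatrix{
H^1(\mathcal U_I,\Fil^0\DR(\mEnd(V,\nabla)_{\mathcal I}))
& H^1(\mathcal U_K,\Fil^0\DR(\mEnd(V,\nabla)_{\mathcal I})) \ar[l]_-{\sigma_1^*}\ar[r]^-{\sigma_2^*}
& H^1(\mathcal U_J,\Fil^0\DR(\mEnd(V,\nabla)_{\mathcal I})),
}
\]
and the common refinement cocycle maps to the two given cocycles, so they represent the same class in $\mathbb H^1\bigl(\Fil^0\DR(\mEnd(V,\nabla)_{\mathcal I})\bigr)=\varinjlim_{\mathcal U}H^1(\mathcal U,\Fil^0\DR(\mEnd(V,\nabla)_{\mathcal I}))$.

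The only step that requires care is ensuring the two candidate cocycle components genuinely land in $\mathcal F^0$ and $\mathcal F^1$ (i.e., respect both the ideal twist $\mathcal I$ and the Hodge filtration), which is why the preliminary paragraph about Griffith transversality and Lemmas~\ref{CanoIsom}, \ref{CanoInj_Fil} does most of the work; the cocycle verifications themselves are formal consequences of the commutation rule in Lemma~\ref{CanoInj}(1) together with $\w\nabla^2=(\w\nabla')^2=0$.
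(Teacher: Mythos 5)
Your proposal is correct and follows essentially the same route as the paper: the same reduction to the three identities 1a)--1c) via Lemma~\ref{cocycle & coboundary}, with 1c) resolved by the integrability of $\w\nabla$ and $\w\nabla'$, and the same union-covering/refinement argument for the independence statement in (2). The extra care you take in checking that the two components land in $\Fil^0\mEnd(V)_{\mathcal I}$ and $\Fil^{-1}\mEnd(V)_{\mathcal I}\otimes\Omega^1_{X/S}(\log D)$ is exactly what the paper establishes just before stating the lemma, so nothing is missing.
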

\begin{proof}
To show
$b((\w V,\w\nabla,\w\Fil),(\w V,\w\nabla,\w\Fil)'):=
\Big(
\big(\iota^{-1}(f_j-f_i)\big)_{(i,j)},
\big(\iota^{-1}(\w\nabla'\circ f_{i}-f_i\circ\w\nabla)\big)_{i}
\Big)$ 
is a $1$-cocycle, we only need to check
\begin{equation*}
\left\{
\begin{array}{lclll}
0 & = & \delta\left( \big(\iota^{-1}(f_j-f_i)\big)_{(i,j)} \right) & \in  C^2(\mathcal U,\mathcal F^0)\\
\oln^\End\left( \big(\iota^{-1}(f_j-f_i)\big)_{(i,j)} \right)  & = & \delta \left( \big(\iota^{-1}(\w\nabla'\circ f_{i}-f_i\circ\w\nabla)\big)_{i} \right) & \in  C^1(\mathcal U,\mathcal F^1) \\
\oln^\End\left( \big(\iota^{-1}(\w\nabla'\circ f_{i}-f_i\circ\w\nabla)\big)_{i} \right) & = & 0 & \in   C^0(\mathcal U,\mathcal F^2)\\
\end{array}
\right.
\end{equation*}
This is equivalent to check
\begin{itemize}
	\item[1a).] $(f_k-f_j)\mid_{U_{ijk}} - (f_k-f_i)\mid_{U_{ijk}} + (f_j-f_i)\mid_{U_{ijk}} = 0$ for each $(i,j,k)\in I^3$.
	\item[1b).] 
	$\nabla_{(\w\nabla,\w\nabla')}(f_j-f_i) 
	= (\w\nabla'\circ f_{j}-f_j\circ\w\nabla)\mid_{U_{ij}} - (\w\nabla'\circ f_{i}-f_i\circ\w\nabla)\mid_{U_{ij}}$ 
	for each $(i,j)\in I^2$.
	\item[1c).]  $\nabla_{(\w\nabla,\w\nabla')} (\w\nabla'\circ f_{i}-f_i\circ\w\nabla)= 0$ for each $i\in I$.
\end{itemize}
 1a) and 1b) are trivial. 1c) follows from the integrability of $\w\nabla'$ and $\w\nabla$.
 
Follow the same method as  in the proof of (2) of Lemma~\ref{2-cocycle_(V,nabla,Fil)}, one gets the proof of (2). 
\end{proof}

\begin{lem}\label{0-cocycle_(V,nabla,Fil)} Let $(\w V,\w \nabla,\w\Fil)$ be a lifting of a filtered de Rham bundle $(V,\nabla,\Fil)$. Denote $\iota = \iota_{(\w V,\w V)}$. 
\begin{itemize}		
	\item [(1).] For any $\epsilon \in \mathbb H^0\big(\Fil^0\DR(\mEnd(V,\nabla)_\mathcal I)\big)$, $\mathrm{id}_{\w V} + \iota(\epsilon)$ is an automorphism of $(\w V,\w \nabla,\w\Fil)$ which lifts $\mathrm{id}_{V}$.
	\item [(2).] Let $f$ be an automorphism of $(\w V,\w \nabla,\w\Fil)$ which lifts $\mathrm{id}_{V}$. Then
	\[\iota^{-1}(f-\mathrm{id}_{\w V}) \in \mathbb H^0\big(\Fil^0\DR(\mEnd(V,\nabla)_\mathcal I)\big).\]
\end{itemize}
\end{lem}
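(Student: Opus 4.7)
The plan is to unwind the definitions and reduce both statements to direct consequences of the previously established compatibility lemmas. Since $\mathbb H^0\big(\Fil^0\DR(\mEnd(V,\nabla)_{\mathcal I})\big)$ equals the global sections of $\Fil^0\mEnd(V)_{\mathcal I}$ annihilated by $\oln^\End$ (left-exactness of $\Gamma$ applied to the kernel $\mathcal H^0$), the task is to match parallel filtered sections with automorphisms lifting $\mathrm{id}_V$ via $\iota = \iota_{(\w V,\w V)}$.

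For Part (1), let $\epsilon \in \mathbb H^0\big(\Fil^0\DR(\mEnd(V,\nabla)_{\mathcal I})\big)$ and set $g := \mathrm{id}_{\w V} + \iota(\epsilon)$. Because $\iota(\epsilon)$ lies in $\mathcal I\cdot\mEnd(\w V)$ by Lemma~\ref{CanoIsom}(1) and $\mathcal I^2=0$, the element $\mathrm{id}_{\w V}-\iota(\epsilon)$ is a two-sided inverse, so $g$ is an automorphism. By Lemma~\ref{CanoInj_Fil}, $\iota$ sends $\Fil^0\mEnd(V)_{\mathcal I}$ into $\Fil^0_{(\w\Fil,\w\Fil)}\mEnd(\w V)$, and $\mathrm{id}_{\w V}\in\Fil^0_{(\w\Fil,\w\Fil)}\mEnd(\w V)$ by Lemma~\ref{main_lemma}(3); hence $g\in\Fil^0_{(\w\Fil,\w\Fil)}\mEnd(\w V)$, which by Lemma~\ref{main_lemma}(1) is exactly the statement that $g$ preserves $\w\Fil$. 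Finally, horizontality amounts to $\nabla_{(\w\nabla,\w\nabla)}(g)=0$; using linearity and Lemma~\ref{CanoInj}(1), this reduces to $\nabla_{(\w\nabla,\w\nabla)}(\iota(\epsilon))=\iota(\oln^\End(\epsilon))=\iota(0)=0$, so $g$ is an automorphism of $(\w V,\w\nabla,\w\Fil)$ lifting $\mathrm{id}_V$.

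For Part (2), let $f$ be such an automorphism. Since $f-\mathrm{id}_{\w V}\equiv 0\pmod{\mathcal I}$, Lemma~\ref{CanoIsom}(2) identifies $f-\mathrm{id}_{\w V}$ as a section of the image of $\iota$, so $\epsilon:=\iota^{-1}(f-\mathrm{id}_{\w V})$ is a well-defined section of $\mEnd(V)_{\mathcal I}$. Because $f$ preserves $\w\Fil$, Lemma~\ref{main_lemma}(1) places $f$ (and hence $f-\mathrm{id}_{\w V}$) inside $\Fil^0_{(\w\Fil,\w\Fil)}\mEnd(\w V)$; the exact sequence of Lemma~\ref{CanoInj_Fil} then forces $\epsilon\in\Gamma(X,\Fil^0\mEnd(V)_{\mathcal I})$. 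Horizontality of $f$ gives $\nabla_{(\w\nabla,\w\nabla)}(f-\mathrm{id}_{\w V})=0$, and applying Lemma~\ref{CanoInj}(1) together with the injectivity of $\iota$ yields $\oln^\End(\epsilon)=0$. Therefore $\epsilon\in\mathbb H^0\big(\Fil^0\DR(\mEnd(V,\nabla)_{\mathcal I})\big)$, completing the correspondence.

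The only real point of care is the compatibility of $\iota$ with the connection and the filtration, which is exactly the content packaged by Lemmas~\ref{CanoInj}(1) and~\ref{CanoInj_Fil}; once these are invoked, every remaining step is either a direct computation using $\mathcal I^2=0$ or a tautological application of Lemma~\ref{main_lemma}. I do not anticipate a genuine obstacle.
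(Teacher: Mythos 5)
Your proof is correct and follows essentially the same route as the paper: identify $\mathbb H^0\big(\Fil^0\DR(\mEnd(V,\nabla)_\mathcal I)\big)$ with global sections of $\Fil^0\mEnd(V)_{\mathcal I}$ killed by $\oln^\End$, and match these under $\iota$ with filtration-preserving, parallel endomorphisms congruent to the identity modulo $\mathcal I$, via Lemma~\ref{main_lemma} and the compatibilities of Lemmas~\ref{CanoInj} and~\ref{CanoInj_Fil}. Your write-up is in fact slightly more complete than the paper's, since you also verify invertibility of $\mathrm{id}_{\w V}+\iota(\epsilon)$ using $\mathcal I^2=0$.
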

\begin{proof} Note that
\begin{equation*}
\begin{split}
\mathbb H^0\big(\DR(\mEnd(V,\nabla)_\mathcal I)\big) 
& = 
\left\{ \epsilon \in \Gamma\left(X,\Fil^0\mEnd(V)_{\mathcal I}\right) 
\mid
\oln^{\End}(\epsilon)=0.\right\} \\ 
& = 
\left\{ \iota(f-\mathrm{id}) \mid f \in \Gamma(X,\Fil^0_{(\w\Fil,\w\Fil)}\mEnd(\w V)) \text{ with } f\equiv \mathrm{id}\pmod{\mathcal I} \text{ and } \nabla_{(\w\nabla,\w\nabla)}(f)=0.\right\}\\
\end{split}
\end{equation*} 
The Lemma follows that fact that for any $ f \colon \w V\rightarrow \w V$ with $f \equiv \mathrm{id}_{\w V}\pmod{\mathcal I}$
\begin{itemize}
	\item $f$ preserves the filtration if and only if $\eta \in \Fil^0_{(\w\Fil,\w\Fil)}\mEnd(\w V)$ (Lemma~\ref{main_lemma});
	\item $f$ is parallel if and only if $\nabla_{(\w\nabla,\w\nabla)}(\eta) =0$. \qedhere
\end{itemize}
\end{proof}

\begin{proof}[Proof of Proposition~\ref{deformation_(V,nabla,Fil)}] According Lemma~\ref{2-cocycle_(V,nabla,Fil)}, Lemma~\ref{1-cocycle_(V,nabla,Fil)} and Lemma~\ref{0-cocycle_(V,nabla,Fil)} one only need to show that 
\begin{itemize}
	\item[(a)] if the filtered de Rham  bundle $(V,\nabla,\Fil)$ has a lifting $(\w V,\w\nabla,\w\Fil)$, then $[c(V,\nabla,\Fil)]=0$;
	\item[(b)] if $[c(V,\nabla,\Fil)]=0$, then the filtered de Rham  bundle $(V,\nabla,\Fil)$ is liftable;
	\item[(c)] Let $(\w V,\w\nabla,\w\Fil)$ be a lifting of the filtered de Rham bundle $(V,\nabla,\Fil)$. For any element $\epsilon$ in $\mathbb H^1(\Fil^0\DR(\mEnd(V,\nabla)_{\mathcal I}))$, there exists another lifting $(\w V,\w\nabla,\w\Fil)'$ such that $\epsilon = [b((\w V,\w\nabla,\w\Fil),(\w V,\w\nabla,\w\Fil)')]$.
\end{itemize}
The proof of (a) is trivial: if we choose a system of local data $\big(\{U_i\}_i,\{(\w V_i,\w\Fil_i)\}_i,\{f_{ij}\}_{ij},\{\w{\nabla}_i\}_i\big)$ such that $(\w V_i,\w\nabla_i,\w\Fil_i) = (\w V,\w\nabla,\w\Fil)\mid_{\w U_i}$ and $f_{ij} = \mathrm{id}_{\w V\mid_{\w U_{ij}}}$, then $c(V,\nabla,\Fil) =0$. 
\paragraph{Proof of (b).} Since 
$c(V,\nabla,\Fil)=\Big(
-\big(\iota^{-1}_{ik}(f_{jk}\circ f_{ij}-f_{ik})\big)_{(i,j,k)}, 
\big(\iota^{-1}_{ij}((\w \nabla_j\circ f_{ij} - f_{ij}\circ\w\nabla_i)\big)_{(i,j)},
\big(\iota^{-1}_{ii}(\w\nabla_i\circ\w\nabla_i)\big)_{i}
\Big)$
 is a coboundary, there exists an element 
 $\Big(
 \big(\iota^{-1}_{ij}(\Delta f_{ij})\big)_{(i,j)},
 \big(\iota^{-1}_{ii}(\Delta \omega_i)\big)_{i}
 \Big)$
in 
\[
\prod_{(i,j)\in I^2} \left(\Fil^0\mEnd(V)\right)_{\mathcal I}\mid_{U_{ij}} \times 
\prod_{i\in I}   \left(\left(\Fil^{-1}\mEnd(V)\right)_{\mathcal I} \otimes\Omega^1_{X/S}(\log D)\right) \mid_{U_{i}}.\]	
such that 
\begin{equation*}
\left\{
\begin{array}{rcl}
 -\big(\iota^{-1}_{ik}(f_{jk}\circ f_{ij}-f_{ik})\big)_{(i,j,k)}  
 & = & \delta \left( \big(\iota^{-1}_{ij}(\Delta f_{ij})\big)_{(i,j)} \right)\\
 \big(\iota^{-1}_{ij}((\w \nabla_j\circ f_{ij} - f_{ij}\circ\w\nabla_i)\big)_{(i,j)}
 & = & - \oln^\End \left( \big(\iota^{-1}_{ij}(\Delta f_{ij})\big)_{(i,j)} \right) +\delta \left( \big(\iota^{-1}_{ii}(\Delta \omega_i)\big)_{i} \right)\\
\big(\iota^{-1}_{ii}(\w\nabla_i\circ\w\nabla_i)\big)_{i}
 & = & \oln^\End \left( \big(\iota^{-1}_{ii}(\Delta \omega_i)\big)_{i} \right)\\
\end{array}
\right.
\end{equation*}
This is equivalent to
\begin{equation*}
\left\{
\begin{array}{lll}
f_{jk}\circ f_{ij}-f_{ik} + (\Delta f_{jk}\circ f_{ij} - \Delta f_{ik} + f_{jk}\circ\Delta f_{ij}) =0 & \text{ for each } & (i,j,k)\in I^3\\
\w \nabla_j\circ f_{ij} - f_{ij}\circ\w\nabla_i + (\w\nabla_j\circ \Delta f_{ij} -\Delta f_{ij} \circ\w\nabla_i) - (\Delta \omega_j \circ f_{ij} - f_{ij}\circ \Delta \omega_i) = 0 & \text{ for each } & (i,j)\in I^2\\
\w\nabla_i\circ\w\nabla_i - (\w\nabla_i\circ \Delta \omega_i + \Delta\omega_i\circ\w\nabla_i)=0 & \text{ for each } &  i\in I\\
\end{array}
\right.
\end{equation*}
Denote $\w\nabla'_i = \w\nabla_i - \Delta \omega_i$ and $f'_{ij} = f_{ij} + \Delta f_{ij}$.
Then it is equivalent to
\begin{equation*}
\left\{
\begin{array}{lll}
f'_{jk}\circ f'_{ij}=f'_{ik} & \text{ for each } & (i,j,k)\in I^3\\
\w \nabla'_j\circ f'_{ij} =f'_{ij}\circ\w\nabla'_i & \text{ for each } & (i,j)\in I^2\\
\w\nabla'_i\circ\w\nabla'_i = 0 & \text{ for each } &  i\in I\\
\end{array}
\right.
\end{equation*}
This implies that local data $(\w V_i,\w\nabla_i)_i$ can be glued via $(f_{ij}')_{ij}$ into a de Rham bundle $(\w V,\w\nabla)$ over $(\w X,\w D)/\w S$. 
 
Since $f_{ij}\in\w\Fil_{ij}^0\mHom(\w V_i,\w V_j)$ and 
\[\Delta f_{ij}\in \Fil^0\mEnd(V)_{\mathcal I}(U_{ij})\subset \w\Fil_{ij}^0\mHom(\w V_i,\w V_j),\]
$f'_{ij}$ is also contained in $\w\Fil_{ij}^0\mHom(\w V_i,\w V_j)$. By Lemma~\ref{main_lemma} $f_{ij}'$'s preserve local filtrations $\w\Fil_i$'s. Thus local filtrations $(\w\Fil_i)_i$ can be glued into a filtration $\w\Fil$ on $\w V$. Since $(\w\nabla_i,\w \Fil_i)$ satisfying Griffith transversality and 
\[\Delta \omega_{i} \in \Big(\Fil^{-1}\mEnd(V)_{\mathcal I}\otimes \Omega^1_{X/S}(\log D)\Big)(U_i)\subset \w\Fil_{ii}^{-1}\mEnd(\w V_i)\otimes \Omega^1_{\w X/\w S}(\log \w D)(\w U_i),\]
$(\w\nabla'_i,\w \Fil_i)$ is also satisfying Griffith transversality by Lemma~\ref{diff_conn}.
In conclusion, we get a filtered de Rham bundle $(\w V,\w\nabla,\w\Fil)$ which lifts $(V,\nabla,\Fil)$.

\paragraph{Proof of (c).} Denote $\iota=\iota_{(\w V,\w V)}$. Let $\Big(
\big(\iota^{-1}(\Delta f_{ij})\big)_{(i,j)},
\big(\iota^{-1}(\Delta \omega_i)\big)_{i}
\Big)$ be a representation of the $1$-class $\epsilon$ with respect to a covering $\{\w U_i\}_{i\in I}$ of $\w X$,
where
\[\Delta f_{ij} \in \mathcal I \cdot \Fil^0_{(\w\Fil,\w\Fil)}\mEnd(\w V) \text{ and } \Delta \omega_i\in \mathcal I \cdot \Fil^{-1}_{(\w\Fil,\w\Fil)}\mEnd(\w V)\otimes \Omega^1_{\w X/\w S}(\log \w D).\]
Denote $(\w V_i,\w\nabla_i,\w\Fil_i):= (\w V,\w\nabla,\w\Fil)\mid_{\w U_i}$ and $f_{ij}=\mathrm{id}_{\w V\mid_{\w U_{ij}}}$. Then 
\begin{equation*}
\left\{
\begin{array}{rclll}
f_{jk}\circ f_{ij}-f_{ik} 
& = 0 = & 
\Delta f_{jk}\circ f_{ij} - \Delta f_{ik} + f_{jk}\circ\Delta f_{ij}, 
& \text{ for each } 
& (i,j,k)\in I^3; \\
-(\w \nabla_j\circ f_{ij} - f_{ij}\circ\w\nabla_i) 
& = 0 = &
- (\w\nabla_j\circ \Delta f_{ij} -\Delta f_{ij} \circ\w\nabla_i) + (\Delta \omega_j \circ f_{ij} - f_{ij}\circ \Delta \omega_i)
& \text{ for each } 
& (i,j)\in I^2; \\
-\w\nabla_i\circ\w\nabla_i 
& = 0 = &  
\w\nabla_i\circ \Delta \omega_i + \Delta\omega_i\circ \w\nabla_i,
& \text{ for each } 
& i\in I. \\
\end{array}
\right.
\end{equation*}

Similar as in the proof of (b), denote $\w\nabla_i'= \w\nabla + \Delta\omega_i$ and $f_{ij}' = \mathrm{id}-\Delta f_{ij}$ one glues $(\w V_i,\w\nabla_i',\w\Fil_i)$ via $f_{ij}'$ into a filtered de Rham bundle $(\w V,\w \nabla,\w \Fil)'$ on $\w X$. The similar method in the proof of (2) in Lemma~\ref{2-cocycle_(V,nabla,Fil)} show that up to isomorphism this filtered de Rham bundle does not depend on the choice of the representation of $\epsilon$. In the following we show that 
\[b((\w V,\w\nabla,\w\Fil),(\w V,\w\nabla,\w\Fil)') =\Big(\big(\iota^{-1}(\Delta f_{ij})\big)_{(i,j)},\big(\iota^{-1}(\Delta \omega_i)\big)_{i}\Big).\]
Since $(\w V',\w\nabla')$ is glued from $(\w V\mid_{\w U_i},\w\nabla'_i)$ via local isomorphism $f'_{ij}$, there exists local isomoprhism $f_i\colon \w V_{\w U_i}\rightarrow \w V'_{\w U_i}$ which lifts $\mathrm{id}_{V\mid_{U_i}}$ such that $f_i = f_j\circ f'_{ij}$ and $\w\nabla'\circ f_i = f_i\circ \w\nabla'_i$.
\begin{equation*}
\xymatrix{
	\w V\mid_{\w U_{ij}} \ar[r]^{f_i} \ar@/_10pt/[d]_{\mathrm{id}} \ar@{..>}[dr] \ar@/^10pt/[d]^{f'_{ij}} \ar@{..>}[dr] & 
	\w V'\mid_{\w U_{ij}} \ar[d]^{\mathrm{id}} &&
	\w V_i\mid_{\w U_{ij}} \ar[r]^{f_i} \ar@/_10pt/[d]_{\w\nabla} \ar@/^10pt/[d]^{\w\nabla'_i} \ar@{..>}[dr] & 
	\w V_i'\mid_{\w U_{ij}} \ar[d]^{\w\nabla'}\\
	\w V\mid_{\w U_{ij}} \ar[r]^{f_j} & 
	\w V'\mid_{\w U_{ij}} && 
	\w V_i\otimes\Omega^1_{X/S}(\log D)\mid_{\w U_{ij}} \ar[r]^{f_i} & 
	\w V_i'\otimes\Omega^1_{X/S}(\log D) \mid_{\w U_{ij}}}
\end{equation*}
Then $f_j\mid_{\w U_{ij}} - f_i\mid_{\w U_{ij}} = f_j\circ(\mathrm{id} - f_{ij}') =f_j\circ\Delta f_{ij}$ and $\w \nabla'\circ f_i - f_i \circ \w\nabla = f_i\circ(\w\nabla'_i - \w\nabla)= f_i\circ\Delta \omega_i$. 
Thus $b((\w V,\w\nabla,\w\Fil),(\w V,\w\nabla,\w\Fil)') 
= \Big(
\big(\iota^{-1}_{(\w V,\w V')}(f_{j}\circ\Delta f_{ij})\big)_{(i,j)},
\big(\iota^{-1}_{(\w V,\w V')}(f_{i}\circ\Delta \omega_i)\big)_{i}
\Big) 
= \Big(
\big(\iota^{-1}(\Delta f_{ij})\big)_{(i,j)},
\big(\iota^{-1}(\Delta \omega_i)\big)_{i}
\Big)$.
\end{proof}

\begin{nota} For any lifting $(\w V,\w \nabla,\w \Fil)$ and any element $\varepsilon$ in $\mathbb H^1(\Fil^0\DR(\mEnd(V,\nabla)_{\mathcal I}))$, denote the $(\w V,\w\nabla,\w\Fil)'$ constructed in the proof of (c) by 
	\[(\w V,\w\nabla,\w\Fil) + \varepsilon.\] 
By definition, one has 
\[[b((\w V,\w\nabla,\w\Fil),(\w V,\w\nabla,\w\Fil)+\varepsilon)] = \varepsilon.\] 
For any lifting $(\w V,\w \nabla)$ and any element $\varepsilon$ in $\mathbb H^1(\DR(\mEnd(V,\nabla)_{\mathcal I}))$ one has similar notation $(\w V,\w\nabla) + \varepsilon$ and 
\[[b((\w V,\w\nabla),(\w V,\w\nabla)+\varepsilon)] = \varepsilon.\]  
\end{nota}

\subsection{Deformation of the Hodge filtration}
 In this section we will study the deformation theory of the Hodge filtration. For a filtered de Rham bundle over $(X,D)/S$, recall the complex $\mathscr C$ defined in (\ref{complex_(Fil)}) is 
 \begin{equation}
 0 \rightarrow \frac{\left(\mEnd(V)\right)_{\mathcal I}}{\left(\Fil^0 \mEnd(V)\right)_{\mathcal I}} \overset{\oln^\End}{\longrightarrow} \frac{\left(\mEnd(V)\right)_{\mathcal I}}{\left(\Fil^{-1} \mEnd(V)\right)_{\mathcal I}}\otimes\Omega_{X/S}^1(\log D) \overset{\oln^\End}{\longrightarrow}
 \frac{\left(\mEnd(V)\right)_{\mathcal I}}{\left(\Fil^{-2} \mEnd(V)\right)_{\mathcal I}}\otimes\Omega_{X/S}^2(\log D) \overset{\oln^\End}{\longrightarrow} \cdots
 \end{equation}
 
\begin{thm} \label{deformation_(Fil)} Let $(V,\nabla,\Fil)$ be a filtered de Rham bundle over $(X,D)/S$. Let $(\w V,\w\nabla)$ be lifting of the underlying de Rham bundle $(V,\nabla)$ over $(\w X,\w D)/\w S$. Then
\begin{itemize}
	\item[1).] the obstruction to lifting $\Fil$ to a Hodge filtration on $(\w V,\w\nabla)$ lies in $\mathbb H^1(\mathscr C)$;
	\item[2).] if $\Fil$ has a lifting, then the lifting set is an $\mathbb H^0(\mathscr C)$-torsor;
\end{itemize}
\end{thm}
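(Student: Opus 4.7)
My strategy is to mirror the proof of Theorem~\ref{deformation_(V,nabla,Fil)} by constructing explicit Cech cocycles. Conceptually the appearance of $\mathscr C$ is justified by the short exact sequence of complexes
\[
0\to \Fil^0 \DR(\mEnd(V,\nabla)_\mathcal I)\to \DR(\mEnd(V,\nabla)_\mathcal I)\to \mathscr C\to 0,
\]
whose long exact sequence in hypercohomology compares the classifying spaces of Theorem~\ref{deformation_(V,nabla,Fil)} and Proposition~\ref{deformation_(V,nabla)}; after fixing the de Rham lift $(\w V,\w\nabla)$, only the fibrewise data over this lift remains to be classified, and this data is the Hodge filtration. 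I will give the direct Cech argument rather than extracting the result from a diagram chase.

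For the obstruction (1), choose by Lemma~\ref{local freeness_(V,Fil)} and Lemma~\ref{local_lifting_(V,Fil)} an open cover $\{\w U_i\}_{i\in I}$ of $\w X$ and local filtrations $\w\Fil_i$ on $\w V|_{\w U_i}$ lifting $\Fil|_{U_i}$. I record two defects. On each overlap $\w U_{ij}$, the identity $\mathrm{id}_{\w V}|_{\w U_{ij}}$ viewed as a morphism $(\w V,\w\Fil_i)\to(\w V,\w\Fil_j)$ is filtered modulo $\mathcal I$, so by Lemma~\ref{CanoIsom} and Lemma~\ref{CanoInj_Fil} (applied with both local lifts equal to $\w V$) its class gives $c_{ij}\in \mathscr C^0(U_{ij})$. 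On each $\w U_i$, the failure of $\w\nabla|_{\w U_i}$ to satisfy Griffith transversality with $\w\Fil_i$ is an $\mathcal O_{\w X}$-linear map that vanishes modulo $\mathcal I$, hence defines $c'_i\in \mathscr C^1(U_i)$. The pair $(c_{ij},c'_i)$ is a total Cech $1$-cochain for $\mathscr C$, and I would verify the three cocycle relations via: (i) the associativity $\mathrm{id}\circ\mathrm{id}=\mathrm{id}$ combined with $\mathcal I^2=0$, forcing $\delta c_{ij}=0$; (ii) the global identity $\w\nabla^{\End}(\mathrm{id}_{\w V})=0$, unpacking to $\oln^\End(c_{ij})=\pm\delta(c'_i)$; (iii) the integrability $\w\nabla^2=0$, forcing $\oln^\End(c'_i)=0$. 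Independence of the class $[(c_{ij},c'_i)]\in\mathbb H^1(\mathscr C)$ from the choice of cover and of the local $\w\Fil_i$ follows from the refinement argument of Lemma~\ref{2-cocycle_(V,nabla,Fil)}(2). Vanishing of the class is equivalent to the existence of a global Hodge filtration $\w\Fil$: the easy direction uses $\w\Fil_i=\w\Fil|_{\w U_i}$, while the converse uses a coboundary of $(c_{ij},c'_i)$ to modify the local data into a global Griffith-transverse filtration.

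For the torsor structure (2), two Hodge lifts $\w\Fil,\w\Fil'$ yield $\beta\in \mathscr C^0(X)$ as the class of $\mathrm{id}_{\w V}$ in $\mEnd(\w V)$ modulo $\Fil^0_{(\w\Fil,\w\Fil')}\mEnd(\w V)$; the Griffith transversality of $\w\nabla$ with respect to both filtrations forces $\oln^\End(\beta)=0$, so $\beta\in\mathbb H^0(\mathscr C)$. Conversely, given $\w\Fil$ and $\beta\in\mathbb H^0(\mathscr C)$, locally lift $\beta$ to $\beta_i\in\mathcal I\cdot\mEnd(\w V|_{\w U_i})$ and set $\w\Fil'_i:=(\mathrm{id}+\beta_i)\w\Fil|_{\w U_i}$; the fact that $\beta$ is globally defined modulo $\Fil^0$ ensures the $\w\Fil'_i$ glue to a global filtration $\w\Fil'$ on $\w V$, and $\oln^\End(\beta)=0$ ensures Griffith transversality of $\w\nabla$ with $\w\Fil'$. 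Uniqueness (when $\beta=0$ one recovers $\w\Fil$) follows from Lemma~\ref{main_lemma}(3), so the action is simply transitive.

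The main obstacle I anticipate is the careful sign bookkeeping and filtered/$\mathcal I$-component extraction in the cocycle identity $\oln^\End(c_{ij})=\pm\delta(c'_i)$; this mirrors the verification of the identities (1a)--(1d) in the proof of Lemma~\ref{2-cocycle_(V,nabla,Fil)} but with $c'_i$ encoding a Griffith-transversality failure in place of a curvature term, so the derivation proceeds from $\w\nabla^{\End}(\mathrm{id}_{\w V})=0$ rather than from $\w\nabla_i^2$-discrepancies. The remaining verifications --- well-definedness of the cocycle class, compatibility under refinement, and the gluing step in the converse direction --- are routine adaptations of the arguments already carried out in the proof of Theorem~\ref{deformation_(V,nabla,Fil)}.
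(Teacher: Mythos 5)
Your proposal is correct and follows essentially the same route as the paper. The paper's Lemmas~\ref{local_data_(Fil)}--\ref{0-cocycle_(Fil)} build the cocycles $c(\Fil)=\bigl((\iota^{-1}(\mathrm{id}-f_{ij}))_{ij},(\iota^{-1}(\widetilde\nabla_i-\widetilde\nabla))_i\bigr)$ and $b(\widetilde\Fil,\widetilde\Fil')=(\iota^{-1}(\mathrm{id}-f_i))_i$ from auxiliary local filtered isomorphisms $f_{ij}$ and auxiliary local Griffiths-transverse connections $\widetilde\nabla_i$, and your intrinsic descriptions are exactly these same classes: since $f_{ij}\in\Fil^0_{(\widetilde\Fil_i,\widetilde\Fil_j)}\mEnd(\widetilde V)$ the class of $\mathrm{id}-f_{ij}$ equals the class of $\mathrm{id}$, and since $\widetilde\nabla_i$ is Griffiths-transverse, $\iota^{-1}(\widetilde\nabla_i-\widetilde\nabla)$ modulo $\Fil^{-1}$ is precisely the Griffiths defect of $\widetilde\nabla$ with respect to $\widetilde\Fil_i$, so the cocycle verifications and torsor argument you outline are the ones carried out in the paper (and the sign in the middle cocycle relation comes out cleanly).
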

In the rest of this subsection, we will always denote $\iota = \iota_{(\w V,\w V)}$.
\begin{lem}\label{local_data_(Fil)} Let $(V,\nabla,\Fil)$ be a filtered de Rham bundle over $(X,D)/S$. Let $\w V$ be lifting of vector bundle $V$ over $(\w X,\w D)/\w S$. Then 
	\begin{itemize}
		\item [(1).] there exists an affine open covering $\{U_i\}_{i\in I}$ such that  $\Fil^\ell V(U_i)$ is free over $\mathcal O_{X}(U_i)$ for all $i\in I$ and $\ell\in \mathbb Z$, and 
		\item [(2).] there exists local liftings $(\w\nabla_i,\w\Fil_i)$ of $(\nabla,\Fil)$ over $\w U_i$ such that $(\w\nabla_i,\w\Fil_i)$ satisfies Griffith transversality for each $i\in I$, and
		\item [(3).] there exists local liftings $f_{ij}\colon (\w V\mid_{\w U_i},\w \Fil_i)\mid_{\w U_{ij}} \rightarrow (\w V\mid_{\w U_j},\w \Fil_j)\mid_{\w U_{ij}}$ of $\mathrm{id}_{(V,\Fil)}$ over $\w U_{ij}$.
	\end{itemize}
	Overloading notation, we call such a triple $(\{U_i\}_{i},\{(\w\nabla_i,\w\Fil_i)\}_i,\{f_{ij}\}_{ij})$, whose existence is guaranteed by the lemma, a \emph{system of local data}.
\end{lem}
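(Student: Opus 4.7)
The plan is to deduce this lemma directly from the three constructive parts of Lemma~\ref{local_lifting_(V,Fil)}. All three of its hypotheses are freeness conditions on the filtration, so the main preparatory step is to choose the open cover carefully enough that freeness holds not just on each $U_i$ but also on all pairwise intersections $U_{ij}$; then (1), (2), (3) follow by applying the appropriate part of Lemma~\ref{local_lifting_(V,Fil)} on each open of the cover.

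For (1), I would first invoke Lemma~\ref{local freeness_(V,Fil)} to obtain an open affine covering of $X$ on which every $\Fil^\ell V$ is free. Since $X/S$ is smooth and hence separated, pairwise intersections of affines are affine; after further refinement we may assume in addition that $\Fil^\ell V(U_{ij})$ is free over $\mathcal{O}_X(U_{ij})$ for all $i,j\in I$ and all $\ell$. This strengthened form of (1) is what is actually needed for (3).

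For (2), fix the cover and the given lifting $\w V$. On each member $\w U_i$, the restriction $\w V|_{\w U_i}$ is a lifting of $V|_{U_i}$, and the freeness of $\Fil^\ell V|_{U_i}$ lets me apply Lemma~\ref{local_lifting_(V,Fil)}(2) to produce a filtration $\w\Fil_i$ on $\w V|_{\w U_i}$ lifting $\Fil|_{U_i}$. Then Lemma~\ref{local_lifting_(V,Fil)}(3) applied to the filtered lifting $(\w V|_{\w U_i},\w\Fil_i)$ yields a connection $\w\nabla_i$ over $(\w U_i,\w D\cap \w U_i)/\w S$ that lifts $\nabla|_{U_i}$ and satisfies Griffith transversality with respect to $\w\Fil_i$.

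For (3), on each intersection $\w U_{ij}$ the two pairs $(\w V|_{\w U_{ij}},\w\Fil_i|_{\w U_{ij}})$ and $(\w V|_{\w U_{ij}},\w\Fil_j|_{\w U_{ij}})$ are both liftings of the common filtered vector bundle $(V|_{U_{ij}},\Fil|_{U_{ij}})$. By the refinement in (1), $\Fil^\ell V|_{U_{ij}}$ is free, so Lemma~\ref{local_lifting_(V,Fil)}(4) produces an isomorphism $f_{ij}$ between these two filtered liftings lifting $\mathrm{id}_{V|_{U_{ij}}}$. The only genuine subtlety in the entire argument is the refinement step to get freeness on pairwise intersections, and this is handled by separatedness of $X/S$; everything else is a direct quotation of Lemma~\ref{local_lifting_(V,Fil)}.
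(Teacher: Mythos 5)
Your argument is exactly the paper's (one-line) proof: the cover comes from Lemma~\ref{local freeness_(V,Fil)} and parts (2), (3), (4) of Lemma~\ref{local_lifting_(V,Fil)} give the filtrations, the Griffiths-transverse connections, and the gluing isomorphisms $f_{ij}$, respectively. One small correction: the refinement step you worry about is unnecessary, since a free sheaf restricted to an open subset stays free, so $\Fil^\ell V|_{U_{ij}}$ is automatically free once it is free on $U_i$ (and note that smoothness of $X/S$ does \emph{not} imply separatedness, so that justification should not be relied upon in any case).
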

\begin{rmk}
	The connection $\w\nabla_i$ needs not to be integrable.
\end{rmk}
\begin{proof}
	The Lemma follows Lemma~\ref{local freeness_(V,Fil)} and Lemma~\ref{local_lifting_(V,Fil)}.
\end{proof}

Let $(V,\nabla,\Fil)$ be a filtered de Rham bundle over $(X,D)/S$. Let $(\w V,\w\nabla)$ be lifting of the underlying de Rham bundle $(V,\nabla)$ over $(\w X,\w D)/\w S$. Let $(\{U_i\}_{i},\{(\w\nabla_i,\w\Fil_i)\}_i,\{f_{ij}\}_{ij})$ be a system of local data given as in Lemma~\ref{local_data_(Fil)}. 
Then one defines an element
\begin{equation} 
\w c(\Fil)=\Big((\mathrm{id}-f_{ij})_{(i,j)},(\w\nabla_i - \w\nabla)_i\Big) 
\end{equation}
in 
\[\prod_{(i,j)\in I^2}
\left(\frac{\mEnd(\w V)}{\w\Fil_{ij}^{0}\mEnd(\w V)}\right)(\w U_{ij})
\times
\prod_{i\in I}\left(
\frac{\mEnd(\w V)}{\w\Fil_{ii}^{-1}\mEnd(\w V)}
\otimes
\Omega^1_{\w X/\w S}(\log \w D)
\right)(\w U_i).\]
Since $c(\Fil)\equiv 0\pmod{\mathcal I}$, one gets an element 
\begin{equation} \label{obs:Fil}
c(\Fil)=\Big(
\big(\iota^{-1}(\mathrm{id}-f_{ij})\big)_{(i,j)},
\big(\iota^{-1}(\w\nabla_i-\w\nabla)\big)_i
\Big) 
\end{equation}
in
\[\prod_{(i,j)\in I^2} \left(
\frac{\mEnd(V)_{\mathcal I}}{\Fil^{0} \mEnd(V)_{\mathcal I}}\right)(U_{ij})
\times
\prod_{i\in I}\left(
\frac{\mEnd(V)_{\mathcal I}}{\Fil^{-1} \mEnd(V)_{\mathcal I}}
\otimes
\Omega^1_{ X/ S}(\log  D) 
\right)(U_{i}).\]

\begin{lem} \label{1-cocycle_(Fil)} Let $(V,\nabla,\Fil)$ be a filtered de Rham bundle with $(\w V,\w \nabla)$ a lifting its underlying de Rham bundle. Let $(\{U_i\}_{i},\{(\w\nabla_i,\w\Fil_i)\}_i,\{f_{ij}\}_{ij})$ be a system of local data given as in Lemma~\ref{local_data_(Fil)}.
\begin{itemize}
	\item [(1).] The element $c(\Fil)$ defined in (\ref{obs:Fil}) is a $1$-cocycle in the Cech resolution of the  complex (\ref{complex_(Fil)}) with respect to the open covering $\{U_i\}_{i\in I}$ of $X$.
	\item [(2).] The class $[c(\Fil)] \in \mathbb H^1(\mathscr C)$ does not depend on the choice of  $(\{U_i\}_{i},\{(\w\nabla_i,\w\Fil_i)\}_i,\{f_{ij}\}_{ij})$.
\end{itemize}
\end{lem}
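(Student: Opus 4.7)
My plan is to verify, directly, the three \v{C}ech cocycle conditions characterized in Lemma~\ref{cocycle & coboundary}(2) for the cochain
\[ c(\Fil) = \Big((\iota^{-1}(\mathrm{id}-f_{ij}))_{(i,j)},\, (\iota^{-1}(\w\nabla_i - \w\nabla))_i\Big) \]
inside the quotient complex $\mathscr C$. Explicitly, I need to establish: (a) $\delta(\iota^{-1}(\mathrm{id}-f_{ij})) \equiv 0$ modulo $\Fil^{0}\mEnd(V)_{\mathcal I}$; (b) $\oln^{\End}(\iota^{-1}(\mathrm{id}-f_{ij})) \equiv \delta(\iota^{-1}(\w\nabla_i - \w\nabla))$ modulo $\Fil^{-1}\mEnd(V)_{\mathcal I}\otimes\Omega^{1}_{X/S}(\log D)$; and (c) $\oln^{\End}(\iota^{-1}(\w\nabla_i - \w\nabla)) \equiv 0$ modulo $\Fil^{-2}\mEnd(V)_{\mathcal I}\otimes\Omega^{2}_{X/S}(\log D)$. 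Throughout I plan to use Lemma~\ref{CanoInj_Fil} to pass between $\mathcal I$-valued sections on $\w X$ and sections of $\Fil^{\ell}\mEnd(V)_{\mathcal I}$ on $X$, and Lemma~\ref{main_lemma} to translate filtration compatibility into statements about $\w\Fil_{ij} = \Fil_{(\w\Fil_i,\w\Fil_j)}$.

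For (a) I would rely on the algebraic identity
\[(\mathrm{id} - f_{jk}) - (\mathrm{id} - f_{ik}) + (\mathrm{id} - f_{ij}) = -(f_{jk}\circ f_{ij} - f_{ik}) + (f_{jk} - \mathrm{id})\circ(f_{ij} - \mathrm{id}).\]
The first summand is the familiar \v{C}ech failure of the $f_{**}$ to compose and lies in $\w\Fil^{0}_{(\w\Fil_i,\w\Fil_k)}\mHom(\w V,\w V)$ because each $f_{**}$ preserves its filtrations; the second summand vanishes because both factors lie in $\mathcal I$ and $\mathcal I^{2} = 0$. For (c), I write $\w\nabla_i = \w\nabla + \omega_i$ with $\omega_i \in \mathcal I \cdot \mEnd(\w V)\otimes\Omega^{1}_{\w X/\w S}(\log \w D)$; a standard curvature expansion gives $\w\nabla_i^{2} = \nabla_{(\w\nabla,\w\nabla)}(\omega_i) + \omega_i\wedge\omega_i$, and the second term vanishes because $\mathcal I^{2}=0$. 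Hence $\oln^{\End}(\iota^{-1}(\w\nabla_i - \w\nabla)) = \iota^{-1}(\w\nabla_i^{2})$, and Griffith's transversality of $\w\nabla_i$ with respect to $\w\Fil_i$ forces $\w\nabla_i^{2} \in \w\Fil^{-2}_{(\w\Fil_i,\w\Fil_i)}\mEnd(\w V)\otimes\Omega^{2}_{\w X/\w S}(\log \w D)$, as needed.

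Step (b) is where I expect to concentrate the real work. A direct computation gives $\nabla_{(\w\nabla,\w\nabla)}(\mathrm{id} - f_{ij}) = -(\w\nabla\circ f_{ij} - f_{ij}\circ\w\nabla)$. Separately, expanding $\w\nabla_j \circ f_{ij} - f_{ij}\circ\w\nabla_i$ via $\w\nabla_{*} = \w\nabla + \omega_{*}$ and discarding the cross-terms $\omega_{*}\circ(f_{ij} - \mathrm{id})$ (which live in $\mathcal I^{2} = 0$) yields the key identity
\[\w\nabla_j \circ f_{ij} - f_{ij}\circ\w\nabla_i = (\omega_j - \omega_i) + (\w\nabla\circ f_{ij} - f_{ij}\circ\w\nabla).\]
The left-hand side lies in $\w\Fil^{-1}_{(\w\Fil_i,\w\Fil_j)}\mHom\otimes\Omega^{1}_{\w X/\w S}(\log \w D)$ by Griffith's transversality for $\w\nabla_i$ and $\w\nabla_j$ combined with the filtration compatibility of $f_{ij}$, so after applying $\iota^{-1}$ we obtain exactly the congruence in (b).

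For part (2), my plan is to follow the refinement argument of Lemma~\ref{2-cocycle_(V,nabla,Fil)}(2) verbatim. Given two systems of local data on possibly different coverings indexed by $I$ and $J$, I form the disjoint union $K = I \sqcup J$ and apply Lemma~\ref{local_data_(Fil)} to extend to a common system on $\{U_i\}_{i\in K}$. The two natural refinement maps on \v{C}ech cohomology then send the class of the combined cocycle to each of the two original cocycles, so they agree in $\mathbb H^{1}(\mathscr C)$. The main obstacle throughout is the asymmetry between $\w\Fil_i$ and $\w\Fil_j$ in step (b); once Lemma~\ref{main_lemma} and Lemma~\ref{CanoInj_Fil} are in place the remaining computations are routine bookkeeping.
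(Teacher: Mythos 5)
Your proposal is correct and follows essentially the same route as the paper: you verify the same three cocycle conditions via the same algebraic identities (cross terms killed by $\mathcal I^2=0$, filtration membership via Griffith transversality and Lemma~\ref{main_lemma}/Lemma~\ref{CanoInj_Fil}), and you handle independence of the local data by the same refinement argument as Lemma~\ref{2-cocycle_(V,nabla,Fil)}(2). The only cosmetic difference is that you expand everything relative to the global lift $\w\nabla$ (writing $\w\nabla_i=\w\nabla+\omega_i$) where the paper uses the mixed connections $\nabla_{(\w\nabla_i,\w\nabla_j)}$; these agree on $\mathcal I$-valued sections, so the computations coincide.
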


\begin{proof} The proof of (2) follow the same method of proof of (2) in Lemma~\ref{2-cocycle_(V,nabla,Fil)}. For the (1), we denote $\overline{\mathcal E}^k= \frac{\mEnd(V)_{\mathcal I}}{\Fil^{-k} \mEnd(V)_{\mathcal I}} \otimes \Omega^k_{ X/ S}(\log  D)$. Then the complex $\mathscr C$ may be rewritten as 
\[0\rightarrow 
\overline{\mathcal E}^0 \xrightarrow{\oln^\End}
\overline{\mathcal E}^1 \xrightarrow{\oln^\End}
\overline{\mathcal E}^2 \xrightarrow{\oln^\End}
\overline{\mathcal E}^3 \xrightarrow{\oln^\End}
 \cdots  \]
for all $k\geq 0$. For any $\tau=(i_1,\cdots,i_s)\in I^s$, denote $U_{(i_1,\cdots,i_s)} = \cap_{\ell =1}^s U_{i_\ell}$ and 
\[C^p(\mathcal U,\overline{\mathcal E}^k) = \prod_{\tau\in I^{p+1}} \overline{\mathcal E}^k(U_{\tau}).\]
Then the Cech resolution of the complex $\mathscr C$ with respect to the open covering $\mathcal U$ of $X$ is the total complex of the following double complex
\begin{equation}
\xymatrix{
	C^0(\mathcal U,\overline{\mathcal E}^0)
	\ar[d]^{\delta} \ar[r]^{\oln^\End} 
	& \red{ C^0(\mathcal U,\overline{\mathcal E}^1) }
	\ar[d]^{\delta} \ar[r]^{\oln^\End} 
	&   C^0(\mathcal U,\overline{\mathcal E}^2)
	\ar[d]^{\delta} \ar[r]^{\oln^\End}
	&  \cdots  \\
	\red{ C^1(\mathcal U,\overline{\mathcal E}^0) }
	\ar[d]^{\delta} \ar[r]^{\oln^\End} 
	& C^1(\mathcal U,\overline{\mathcal E}^1)
	\ar[d]^{\delta} \ar[r]^{\oln^\End} 
	&  C^1(\mathcal U,\overline{\mathcal E}^2)
	\ar[d]^{\delta} \ar[r]^{\oln^\End}
	&  \cdots  \\
	C^2(\mathcal U,\overline{\mathcal E}^0)
	\ar[d]^{\delta} \ar[r]^{\oln^\End} 
	&  C^2(\mathcal U,\overline{\mathcal E}^1)
	\ar[d]^{\delta} \ar[r]^{\oln^\End} 
	&  C^2(\mathcal U,\overline{\mathcal E}^2)
	\ar[d]^{\delta} \ar[r]^{\oln^\End} 
	&  \cdots  \\
	\vdots &  \vdots  & \vdots &   \\
}
\end{equation}
To show that
\[c(\Fil)=\Big(
\big(\iota^{-1}(\mathrm{id}-f_{ij})\big)_{(i,j)},
\big(\iota^{-1}(\w\nabla_i-\w\nabla)\big)_i
\Big)
\in 
C^1(\mathcal U,\overline{\mathcal E}^0) \times 
C^0(\mathcal U,\overline{\mathcal E}^1)\] 
forms a cocycle, one needs to check
\begin{equation*}
\left\{
\begin{array}{lclll}
0 
& = & 
\delta\left( \big(\iota^{-1}(\mathrm{id}-f_{ij})\big)_{(i,j)} \right) 
& \in C^2(\mathcal U,\overline{\mathcal E}^0) \\
\oln^\End\left( \big(\iota^{-1}(\mathrm{id}-f_{ij})\big)_{(i,j)} \right) 
& = & 
\delta\left( \big(\iota^{-1}(\w\nabla_i-\w\nabla)\big)_i \right)  
& \in C^1(\mathcal U,\overline{\mathcal E}^1) \\
\oln^\End\left( \big(\iota^{-1}(\w\nabla_i-\w\nabla)\big)_i \right)  
& = & 
0 
& \in C^0(\mathcal U,\overline{\mathcal E}^2) \\
\end{array}
\right.
\end{equation*}
This is equivalent to checking
\begin{itemize}
	\item[1a).] 
	$ (\mathrm{id}-f_{jk}) - (\mathrm{id}-f_{ik}) + (\mathrm{id}-f_{ij}) \equiv 0 \pmod{\Fil^0_{ik}\mEnd(\w V)(\w U_{ijk})}$ for each $(i,j,k)\in I^3$.
	\item[1b).] 
	$\w\nabla_{ij}(\mathrm{id}-f_{ij}) 
	\equiv 
	(\w\nabla_j - \w\nabla) - (\w\nabla_i - \w\nabla) 
    \pmod{\big(\Fil^{-1}_{ij}\mEnd(\w V)\otimes \Omega_{\w X/\w S}^1(\log \w D)\big)(\w U_{ij})}$ 
	for each $(i,j)\in I^2$,
	\item[1c).] $\w\nabla_{ii}(\w\nabla_i - \w\nabla)  \equiv 0 \pmod{\big(\Fil_{ii}^{-2}\mEnd(\w V) \otimes \Omega_{\w X/\w S}^2(\log \w D)\big)(\w U_i)}$ for each $i\in I$.
\end{itemize}

Denote $\alpha_{ij}= \mathrm{id}_{\w V}-f_{ij}$. Then
\[f_{ik} - f_{jk} \circ f_{ij} = \alpha_{jk} - \alpha_{ik} + \alpha_{ij} - \alpha_{jk} \circ \alpha_{ij}.\]
Since $\alpha_{ij}\equiv 0\equiv \alpha_{jk}\pmod{\mathcal I}$ and $\mathcal I^2=0$, $\alpha_{jk}\circ \alpha_{ij} =0$. By definition $f_{ij}\in \Fil^0_{ij}\mEnd(\w V)$, thus $f_{ik} - f_{jk}\circ f_{ij} \in \Fil^0_{ik}\mEnd(\w V)$ by Lemma~\ref{main_lemma} and 1a) follows.

Note that 
\begin{equation*}
\begin{split}
\w\nabla_{ij}(\mathrm{id}-f_{ij}) = & \w\nabla_j\circ (\mathrm{id}-f_{ij}) - (\mathrm{id}-f_{ij})\circ \w\nabla_i\\
= & (\w\nabla_j - \w\nabla) -(\w\nabla_i - \w\nabla) - (\w\nabla_j\circ f_{ij} - f_{ij}\circ \w\nabla_i)
\end{split}
\end{equation*}
By lemma~\ref{main_lemma}, $(\w\nabla_j\circ f_{ij} - f_{ij}\circ \w\nabla_i) \in \Fil^{-1}_{ij}\mEnd(\w V) \otimes \Omega_{\w X/\w S}^1(\log \w D)$ and 1b) follows.

Note that
\begin{equation*}
\begin{split}
\w\nabla_{ii}(\w\nabla_i - \w\nabla\mid_{\w U_i})  
= & 
\w\nabla_i\circ (\w\nabla_i - \w\nabla\mid_{\w U_i}) + (\w\nabla_i - \w\nabla\mid_{\w U_i})\circ \w\nabla_i\\
= & (\w\nabla_i-\w\nabla)^2 +\w\nabla_i^2 -\w\nabla^2\\
\end{split}
\end{equation*}
Since both $\w\nabla_i-\w\nabla \equiv 0\pmod{\mathcal I}$, $(\w\nabla_i-\w\nabla)^2=0$. Since $\w\nabla$ is integrable, $\w\nabla^2=0$. Thus 1c) follows Lemma~\ref{main_lemma}.
\end{proof}

\begin{lem}\label{local_isomorphism_(Fil)}
Let $(V,\nabla,\Fil)$ be a filtered de Rham bundle over $(X,D)/S$. Let $(\w V,\w\nabla)$ be a lifting of the underlying de Rham bundle $(V,\nabla)$ over $(\w X,\w D)/\w S$. Suppose there are two Hodge filtrations $\w\Fil$ and $\w\Fil'$ on $(\w V,\w\nabla)$ which lift  the $\Fil$. Then
\begin{itemize}
	\item [(1).] there exists an open affine covering  $\{U_i\}_{i\in I}$ such that $\w \Fil^\ell \w V(\w U_i)$ and $\w \Fil'^\ell \w V(\w U_i)$ are free over $\mathcal O_{\w X}(\w U_i)$ for all $i\in I$ and $\ell\in \mathbb Z$, and 
	\item [(2).] there exists a (local) isomorphism $f_i\colon (\w V,\w\Fil)\mid_{\w U_i} \rightarrow (\w V,\w\Fil')\mid_{\w U_i}$ lifting the identity $\mathrm{id}_{V\mid_{U_i}}$ for each $i\in I$.
\end{itemize}
Overloading notation, we call a pair  $(\{U_i\}_{i\in I},\{f_i\}_i)$, whose existence is guaranteed by the lemma, a \emph{system of local data}.
\end{lem}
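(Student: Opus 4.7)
The plan is to reduce the claim directly to the local freeness and local lifting results already established in the preliminaries, namely Lemma~\ref{local freeness_(V,Fil)} and (especially) part (4) of Lemma~\ref{local_lifting_(V,Fil)}. The statement is in fact a direct analogue of Lemma~\ref{local_isomorphism_(V,nabla,Fil)}, with the roles of the two liftings played not by two different underlying vector bundles but by two different filtrations on the \emph{same} underlying lifting $(\widetilde V,\widetilde\nabla)$.

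For part (1), I would first apply Lemma~\ref{local freeness_(V,Fil)} to the filtered vector bundle $(V,\Fil)$ on $X$, producing an open affine cover $\{U_i\}_{i\in I}$ on which every $\Fil^\ell V(U_i)$ is free over $\mathcal O_X(U_i)$. Since there are only finitely many nonzero steps in each of $\widetilde\Fil$ and $\widetilde\Fil'$, and each $\widetilde\Fil^\ell \widetilde V$ and $\widetilde\Fil'^\ell \widetilde V$ is a vector bundle on $\widetilde X$ lifting $\Fil^\ell V$, we may refine $\{U_i\}$ to an affine cover (still denoted $\{U_i\}$) on which all the $\widetilde\Fil^\ell\widetilde V(\widetilde U_i)$ and $\widetilde\Fil'^\ell\widetilde V(\widetilde U_i)$ become free over $\mathcal O_{\widetilde X}(\widetilde U_i)$ simultaneously. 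This step is routine bookkeeping and uses only that freeness lifts across a square-zero thickening.

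For part (2), fix such an $i$. Both $(\widetilde V|_{\widetilde U_i},\widetilde\Fil|_{\widetilde U_i})$ and $(\widetilde V|_{\widetilde U_i},\widetilde\Fil'|_{\widetilde U_i})$ are liftings of the filtered vector bundle $(V|_{U_i},\Fil|_{U_i})$ from $U_i$ to $\widetilde U_i$ in the sense of Setup~\ref{setup}. Part (4) of Lemma~\ref{local_lifting_(V,Fil)} then immediately supplies an isomorphism
\[
f_i\colon (\widetilde V|_{\widetilde U_i},\widetilde\Fil|_{\widetilde U_i}) \xrightarrow{\ \simeq\ } (\widetilde V|_{\widetilde U_i},\widetilde\Fil'|_{\widetilde U_i})
\]
lifting $\mathrm{id}_{V|_{U_i}}$. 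Doing this independently on each $\widetilde U_i$ produces the desired system of local data.

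There is essentially no obstacle here: the only mild point is to simultaneously trivialize the two filtrations, but this is harmless because each filtration has only finitely many nontrivial steps. Note that we are not asking these $f_i$ to be compatible with the connection $\widetilde\nabla$, nor to agree on overlaps — that data will be measured, in the next step of the deformation theory, by the class $c(\widetilde\Fil,\widetilde\Fil')$ built from the differences $f_j\circ f_i^{-1}-\mathrm{id}$ and $\widetilde\nabla\circ f_i - f_i\circ\widetilde\nabla$, exactly in parallel with the construction of the obstruction class in Lemma~\ref{1-cocycle_(Fil)}.
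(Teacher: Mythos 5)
Your proposal is correct and is exactly the paper's argument: the paper's proof simply cites Lemma~\ref{local freeness_(V,Fil)} for the cover and Lemma~\ref{local_lifting_(V,Fil)} (part (4)) for the local isomorphisms, which is what you spell out. The only superfluous step is refining the cover to make the lifted filtration steps free — as you note, freeness lifts across the square-zero thickening, so the cover from Lemma~\ref{local freeness_(V,Fil)} already suffices.
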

\begin{proof}
	The lemma follows Lemma~\ref{local freeness_(V,Fil)} and Lemma~\ref{local_lifting_(V,Fil)}.
\end{proof}

Let $(V,\nabla,\Fil)$ be a filtered de Rham bundle over $(X,D)/S$. Let $(\w V,\w\nabla)$ be lifting of the underlying de Rham bundle $(V,\nabla)$ over $(\w X,\w D)/\w S$. Suppose there are two Hodge filtrations $\w\Fil$ and $\w\Fil'$ on $(\w V,\w\nabla)$ which lift the $\Fil$. Let $(\{U_i\}_{i\in I},\{f_i\}_i)$ be a system of local data given as in Lemma~\ref{local_isomorphism_(Fil)}. Then 
\begin{equation} \label{torsor_(Fil)}
 \w b(\w\Fil,\w\Fil') = \Big(\mathrm{id}-f_{i}\Big)_{i} \in  \prod_{i\in I}\left( \frac{\mEnd(\w V)}{\Fil^{0}_{(\w\Fil,\w\Fil')}\mEnd(\w V)}\right)(\w U_i).
\end{equation}
Since $\w b(\w\Fil,\w\Fil')\equiv 0\pmod{\mathcal I}$, one gets an element 
\begin{equation} 
b(\w\Fil,\w\Fil') = \Big(\iota^{-1}(\mathrm{id}-f_{i})\Big)_{i} \in \prod_{i\in I} \left(\frac{\mEnd(V)} {\Fil^{0}\mEnd(V)}\right)(U_i).
\end{equation}
where $\iota = \iota_{(\w V,\w V)}$.
\begin{lem} \label{0-cocycle_(Fil)} Let $(\w V,\w \nabla)$ be a lifting of the underlying de Rham bundle of a filtered de Rham bundle $(V,\nabla,\Fil)$. Let $\w\Fil$ and $\w\Fil'$ be two Hodge filtrations on $(\w V,\w\nabla)$ which lift the $\Fil$. Then
\begin{itemize}	
	\item [(1).] The element $b(\w\Fil,\w\Fil')$ defined in (\ref{torsor_(Fil)}) is a $0$-cocycle in the Cech resolution of the  complex (\ref{complex_(Fil)}) with respect to the open covering $\{U_i\}_{i\in I}$ of $X$.
	\item [(2).] The class $[b(\w\Fil,\w\Fil')] \in \mathbb H^0(\mathscr C)$ does not depend on the choice of $(\{U_i\}_{i\in I},\{f_i\}_i)$.
\end{itemize}
\end{lem}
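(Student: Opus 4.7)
The plan is to verify the two required Cech cocycle conditions directly, then handle well-definedness by a standard refinement argument paralleling part~(2) of Lemma~\ref{2-cocycle_(V,nabla,Fil)}. By Lemma~\ref{cocycle & coboundary}, a $0$-cocycle in the total Cech complex of $\mathscr C$ is a tuple $(b_i)_i \in C^0(\mathcal U,\overline{\mathcal E}^0)$ satisfying $\delta((b_i)_i)=0$ in $C^1(\mathcal U,\overline{\mathcal E}^0)$ and $\oln^\End((b_i)_i)=0$ in $C^0(\mathcal U,\overline{\mathcal E}^1)$.

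For the $\delta=0$ step: on $U_{ij}$ one has $(\mathrm{id}-f_j)-(\mathrm{id}-f_i)=f_i-f_j$, which lies in $\mathcal I\cdot\mEnd(\w V)$ because $f_i$ and $f_j$ both lift $\mathrm{id}_V$. Since each $f_i$ is a filtered isomorphism $(\w V,\w\Fil)\to(\w V,\w\Fil')$, Lemma~\ref{main_lemma}(1) places $f_i,f_j$, hence $f_i-f_j$, in $\Fil^0_{(\w\Fil,\w\Fil')}\mEnd(\w V)$. Lemma~\ref{CanoInj_Fil} then identifies $\iota^{-1}(f_i-f_j)$ with an element of $\Fil^0\mEnd(V)_{\mathcal I}$, so it vanishes in $\overline{\mathcal E}^0=\mEnd(V)_{\mathcal I}/\Fil^0\mEnd(V)_{\mathcal I}$.

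For the $\oln^\End=0$ step: using Lemma~\ref{CanoInj}(1), the task reduces to showing
\[\nabla_{(\w\nabla,\w\nabla)}(\mathrm{id}-f_i)\;=\;-(\w\nabla\circ f_i - f_i\circ\w\nabla)\]
lies in $\Fil^{-1}_{(\w\Fil,\w\Fil')}\mEnd(\w V)\otimes\Omega^1_{\w X/\w S}(\log\w D)$. The key conceptual input, and the main substantive step of the proof, is that by hypothesis $\w\nabla$ satisfies Griffith transversality with respect to \emph{both} $\w\Fil$ and $\w\Fil'$, since $(\w V,\w\nabla,\w\Fil)$ and $(\w V,\w\nabla,\w\Fil')$ are filtered de Rham bundles sharing the same underlying $(\w V,\w\nabla)$. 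For a local section $v\in\w\Fil^\ell \w V$, the containment $f_i(v)\in\w\Fil'^\ell \w V$ combined with transversality for $\w\Fil'$ yields $\w\nabla(f_i(v))\in\w\Fil'^{\ell-1}\w V\otimes\Omega^1$; transversality for $\w\Fil$ places $\w\nabla(v)$ in $\w\Fil^{\ell-1}\w V\otimes\Omega^1$, which $f_i$ maps into $\w\Fil'^{\ell-1}\w V\otimes\Omega^1$. Subtracting and invoking Lemma~\ref{main_lemma}(1) gives the required filtration degree, completing part~(1).

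For part~(2), the argument mirrors the proof of Lemma~\ref{2-cocycle_(V,nabla,Fil)}(2): given two systems of local data indexed by $I$ and $J$, extend them to a common system indexed by $K=I\sqcup J$ (using Lemma~\ref{local_isomorphism_(Fil)} to enlarge the covering and choose compatible local isomorphisms), and observe that the pullbacks of the class computed on $K$ along the natural refinement maps $\sigma_1^*,\sigma_2^*$ coincide with the classes computed on $I$ and $J$ respectively; all three therefore define the same element of $\mathbb H^0(\mathscr C)=\varinjlim_{\mathcal U}H^0(\mathcal U,\mathscr C)$. The only genuinely non-formal ingredient in the whole argument is the double Griffith transversality used in part~(1).
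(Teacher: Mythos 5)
Your proof is correct and takes essentially the same route as the paper: part (1) reduces, via Lemma~\ref{CanoInj} and Lemma~\ref{CanoInj_Fil}, to the two containments the paper labels 1a) and 1b), with the decisive input in 1b) being exactly the Griffith transversality of $\w\nabla$ with respect to \emph{both} $\w\Fil$ and $\w\Fil'$ combined with Lemma~\ref{main_lemma} (a point the paper leaves implicit in ``this follows Lemma~\ref{main_lemma}''), and part (2) is the same common-refinement argument as in Lemma~\ref{2-cocycle_(V,nabla,Fil)}(2). No gaps.
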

\begin{proof}The proof of (2) follows the same method of proof of (2) in Lemma~\ref{2-cocycle_(V,nabla,Fil)}. For (1), to proof 
\[b(\w\Fil,\w\Fil') = \Big(\iota^{-1}(\mathrm{id}-f_{i})\Big)_{i}\]	
forms a $1$-cocycle, we only need to check 
\begin{equation*}
 \delta\left(\Big(\iota^{-1}(\mathrm{id}-f_{i})\Big)_{i}\right)= 0 \in  C^1(\mathcal U,\overline{\mathcal E}^0) \quad \text{ and } \quad \oln^{\End}\left(\Big(\iota^{-1}(\mathrm{id}-f_{i})\Big)_{i}\right)=0\in   C^0(\mathcal U,\overline{\mathcal E}^1).
\end{equation*}  
This is equivalent to check
\begin{itemize}
	\item[1a).] $(\mathrm{id}-f_{j})-(\mathrm{id}-f_{i}) \in \Fil^0_{(\w\Fil,\w\Fil')}\mEnd(\w V)(\w U_{ij})$.
	\item[1b).] $\w\nabla_i\circ (\mathrm{id}-f_{i}) - (\mathrm{id}-f_{i})\circ \w\nabla_i \in \Big(\Fil^{-1}_{(\w\Fil,\w\Fil')}\mEnd(\w V)\otimes\Omega^1_{\w X/\w S}(\log \w D)\Big)(\w U_i)$.
\end{itemize}
This follows Lemma~\ref{main_lemma}.
\end{proof}

\begin{proof}[Proof of Theorem~\ref{deformation_(Fil)}]
 According Lemma~\ref{1-cocycle_(Fil)} and Lemma~\ref{0-cocycle_(Fil)} one only need to show that 
\begin{itemize}
	\item[(a)] if the Hodge filtration $\Fil$ has a lifting $\w\Fil$, then $[c(\Fil)]=0$;
	\item[(b)] if $[c(\Fil)]=0$, then the Hodge filtration $\Fil$ is liftable;
	\item[(c)] Let $\w\Fil$ be a lifting the Hodge filtration $\Fil$. For any element $\epsilon$ in $\mathbb H^0(\mathscr C)$, there exists another lifting $\w\Fil'$ such that $\epsilon = b( \w\Fil,\w\Fil')$.
\end{itemize}
The proof of (a) is trivial: if we simply choose the system of local data $\big(\{U_i\}_i,\{f_{i}\}_{i}\big)$ with $f_{i} = \mathrm{id}_{\w V\mid_{\w U_{i}}}$, then $c(\Fil) =0$. 

\paragraph{Proof of (b).} Since 
$c(\Fil)=\Big(
\big(\iota^{-1}(\mathrm{id}-f_{ij})\big)_{(i,j)},
\big(\iota^{-1}(\w\nabla_i-\w\nabla)\big)_i
\Big) $
is a coboundary, there exists an element 
\[\big(\iota^{-1}(\Delta f_{i})\big)_i \in \prod_{i\in I} \left(
\frac{\mEnd(V)_{\mathcal I}} {\Fil^{0}_{ii}\mEnd(V)_{\mathcal I}}
\right)(U_{i})\]
with  $\Delta f_i \in \mathcal I\cdot \mEnd(\w V)$ such that 
\begin{itemize}
	\item[(b1)] $\mathrm{id} - f_{ij} \equiv \Delta f_{j} - \Delta f_{i} \mod \Big(\Fil^0_{ij}\mEnd(\w V)\Big)(\w U_{ij})$.
	\item[(b2)] $ \w\nabla_i-\w\nabla \equiv \w\nabla \circ \Delta f_i - \Delta f_i \circ \w\nabla \mod  \Big( \Fil^{-1}_{ii}\mEnd(\w V) \otimes\Omega^1_{\w X/\w S}(\log \w D) \Big)(\w U_{i})$. 
\end{itemize}
Consider the new $\w \Fil_i' = (\mathrm{id}_{\w V\mid_{\w U_i}}  - \Delta f_i)^*(\w Fil_i)$ on $\w V_i$. Then $\mathrm{id}_{\w V\mid_{\w U_i}}  - \Delta f_i \in \Fil^0_{\Fil'_i,\Fil_i}\mEnd(\w V) $
\begin{equation*}
\xymatrix@C=3cm{
\left(\w V\mid_{\w U_{ij}},\w\Fil'_{i}\right) 
\ar@/_3pt/[r]_{f_{ij} + \Delta f_j - \Delta f_i} 
\ar@/^3pt/[r]^{\mathrm{id}} 
\ar[d]_{\mathrm{id}-\Delta f_i} 
&  \left(\w V\mid_{\w U_{ij}},\w\Fil'_{j}\right) \ar[d]^{\mathrm{id}-\Delta f_j} \\
\left(\w V\mid_{\w U_{ij}},\w\Fil_{i}\right) \ar[r]^{f_{ij}}  
&  \left(\w V\mid_{\w U_{ij}},\w\Fil_{j}\right)\\
}
\end{equation*}
Thus $f_{ij} + \Delta f_j - \Delta f_i = (\mathrm{id}-\Delta f_j)^{-1}\circ f_{ij} \circ (\mathrm{id}-\Delta f_i) \in \Fil^0_{\Fil'_i,\Fil'_j}\mEnd(\w V)$ by Lemma~\ref{main_lemma}. Thus $\mathrm{id} \in \Fil^0_{\Fil'_i,\Fil'_j}\mEnd(\w V)$ by (b1), and $\w\Fil'_i$, $\w\Fil'_j$  coincide over $\w U_{ij}$ by Lemma~\ref{main_lemma}. Hence one may glue local filtrations $\{\w \Fil'_i\}$ into a filtration $\w \Fil$ of $\w V$.

Since $(\w\nabla_i,\w\Fil_i)$ satisfies Griffith tranversality, $((\mathrm{id}_{\w V\mid_{\w U_i}} - \Delta f_i)^*\w\nabla_i,\w\Fil_i')$ also satisfies Griffith transversality. Since  $(\mathrm{id}_{\w V\mid_{\w U_i}} - \Delta f_i)^*\w\nabla_i = \w\nabla_i - \w\nabla \circ \Delta f_i + \Delta f_i \circ \w\nabla$,  (b2) implies that $(\w\nabla\mid_{\w U_i},\w\Fil_i')$ also satisfies Griffith transversality by Lemma~\ref{diff_conn}. Thus $(\w V,\w\nabla,\w\Fil)$ is a filtered de Rham bundle.  

\paragraph{Proof of (c).} Denote $\iota=\iota_{\w V,\w V}$, $(\w\nabla_i,\w\Fil_i) =(\w\nabla,\w\Fil)\mid_{\w U_i}$ and $f_{ij} = \mathrm{id}_{\w V\mid_{\w U_{ij}}}$. Let \[\big(\iota^{-1}(\Delta f_{i})\big)_i \in \prod_{i\in I} \left(
\frac{\mEnd(V)_{\mathcal I}} {\Fil^{0}_{ii}\mEnd(V)_{\mathcal I}}
\right)(U_{i})\]	
be a representation of $\epsilon$ with $\Delta f_i \in \mathcal I\cdot \mEnd(\w V)$. Then 
\begin{itemize}
	\item[(c1)] $\mathrm{id} - f_{ij} \equiv 0 \equiv  \Delta f_{j} - \Delta f_{i}  \mod \Big(\Fil^0_{ij}\mEnd(\w V)\Big)(\w U_{ij})$.
	\item[(c2)] $ \w\nabla_i-\w\nabla \equiv 0 \equiv \w\nabla \circ \Delta f_i - \Delta f_i \circ \w\nabla \mod  \Big( \Fil^{-1}_{ii}\mEnd(\w V) \otimes\Omega^1_{\w X/\w S}(\log \w D) \Big)(\w U_{i})$. 
\end{itemize}
By the proof of (b), one constructs a new filtration $\w\Fil'$ such that $(\w V,\w\nabla,\w\Fil')$ forms a filtered de Rham bundle with local isomorphisms 
\[\mathrm{id} - \Delta f_i\colon (\w V,\w\Fil')\mid_{\w U_i} \rightarrow (\w V,\w\Fil)\mid_{\w U_i}.\]
Thus $b(\Fil,\Fil') = \big(\iota^{-1}(\mathrm{id} - (\mathrm{id} - \Delta f_i))\big)_i =\big(\iota^{-1}(\Delta f_i)\big)_i$.
\end{proof}

\section{Deformations of (graded) Higgs bundles}

In this section we will study the deformation theory of graded Higgs bundles. The main result is Theorem~\ref{deformation_(E,theta,Gr)}.

Let $S$, $\w{S}$, $\mathfrak a$, $\w{X}$, $\w D$, $X$, $D$ and $\mathcal I$ be as in Setup \ref{setup}.  Let $(E,\theta)$ be a (logarithmic) Higgs bundle over $(X,D)/S$.  The pair $\mEnd(E,\theta)$ is naturally a (logarithmic) Higgs bundle. Thus one has the following Higgs complex 
\begin{equation}
\DR(\mEnd(E,\theta)): \qquad 0 \rightarrow \mEnd(E)\overset{\theta^\End}{\longrightarrow} \mEnd(E)\otimes\Omega_{X/S}^1(\log D)\overset{\theta^\End}{\longrightarrow}  \mEnd(E)\otimes\Omega_{X/S}^2(\log D) \overset{\theta^\End}{\longrightarrow} \cdots
\end{equation}
Tensoring with the ideal sheaf $\mathcal I$, one gets complex
\begin{equation} \label{complex_(E,theta)}
\DR(\mEnd(E,\theta))_{\mathcal I}: \qquad 0 \rightarrow \mEnd(E)_{\mathcal I}\overset{\theta^\End}{\longrightarrow} \mEnd(E)_{\mathcal I}\otimes\Omega_{X/S}^1(\log D) \overset{\theta^\End}{\longrightarrow}  \mEnd(E)_{\mathcal I}\otimes\Omega_{X/S}^2(\log D) \overset{\theta^\End}{\longrightarrow} \cdots
\end{equation}

Suppose $(E,\theta,\Gr)$ is a graded Higgs bundle. Then $\Gr$ induces a grading structure on the complex $\DR(\mEnd(E,\theta))\otimes \mathcal I$ with
\begin{equation}  \label{complex_(E,theta,Fil)}
\mathrm{Gr}^\ell \DR(\mEnd(E,\theta))_{\mathcal I}: \qquad 0 \rightarrow \mathrm{Gr}^\ell\mEnd(E)_{\mathcal I} \overset{\theta^\End}{\longrightarrow} \mathrm{Gr}^{\ell-1}\mEnd(E)_{\mathcal I}\otimes\Omega_{X/S}^1(\log D)\overset{\theta^\End}{\longrightarrow} \cdots
\end{equation}

\begin{thm}\label{deformation_(E,theta,Gr)} Let $(E,\theta,\Gr)$ be a graded Higgs bundle over $(X,D)/S$. Then
	\begin{itemize}
		\item[1).] the obstruction to lifting $(E,\theta,\Gr)$ to a graded Higgs bundle over $(\w X,\w D)/\w S$
		 lies in $\mathbb H^2\big(\mathrm{Gr}^0 \DR(\mEnd(E,\theta))_{\mathcal I}\big)$;
		\item[2).] if $(E,\theta,\Gr)$ has a graded lifting $(\widetilde{E},\widetilde{\theta},\w\Gr)$, then the lifting set is an $\mathbb H^1\big(\mathrm{Gr}^0 \DR(\mEnd(E,\theta))_{\mathcal I}\big)$-torsor;
		\item[3).] the infinitesimal automorphism group of $(\widetilde{E},\widetilde{\theta},\w\Gr)$ over $(E,\theta,\Gr)$ is $\mathbb H^0\big(\mathrm{Gr}^0 \DR(\mEnd(E,\theta))_{\mathcal I}\big)$.
	\end{itemize}
\end{thm}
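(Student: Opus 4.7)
The plan is to mirror the proof of Theorem~\ref{deformation_(V,nabla,Fil)} step-for-step, with the filtration $\Fil$ replaced by the grading $\Gr$, the connection $\nabla$ replaced by the $\mathcal O_X$-linear Higgs field $\theta$, and the twisted de Rham sheaf $(\mathcal I,\mathrm d)$ replaced by the trivial Higgs sheaf $(\mathcal I,0)$. The formal analogies driving the proof are: (i) a graded version of Lemma~\ref{main_lemma} in which $\Gr^\ell_{(\Gr_1,\Gr_2)}\mHom$ is precisely the subsheaf of degree-$\ell$ morphisms; (ii) $\theta$ lowers $\Gr$ by $1$, mirroring Griffith transversality; and (iii) Lemma~\ref{CanoInj} transports verbatim to the Higgs setting, providing canonical injections $\iota_{(\w E_i,\w E_j)}\colon \mEnd(E)_{\mathcal I}\hookrightarrow \mHom(\w E_i,\w E_j)$ that intertwine $\theta^\End$ with the induced Higgs fields.

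First I would establish a local lifting lemma analogous to Lemma~\ref{local_lifting_(V,Fil)}: over an affine open $U$ on which each $\Gr^\ell E$ is free, any graded Higgs bundle admits a graded lift $(\w E,\w\theta,\w\Gr)$ to $\w U$, and any two such lifts are connected by a graded isomorphism above $\mathrm{id}_E$. Existence of $(\w E,\w\Gr)$ is immediate by lifting adapted bases, and a lift $\w\theta$ is obtained by choosing any preimage of $\theta$ in $\Gr^{-1}_{(\w\Gr,\w\Gr)}\mEnd(\w E)\otimes\Omega^1_{\w X/\w S}(\log\w D)$. Given a system of local data $\big(\{U_i\},\{(\w E_i,\w\theta_i,\w\Gr_i)\},\{f_{ij}\}\big)$, I define the obstruction $2$-cochain
\[
c(E,\theta,\Gr):=\Big(-\iota^{-1}_{ik}(f_{jk}\circ f_{ij}-f_{ik}),\; \iota^{-1}_{ij}(\w\theta_j\circ f_{ij}-f_{ij}\circ\w\theta_i),\; \iota^{-1}_{ii}(\w\theta_i\wedge\w\theta_i)\Big)
\]
in the Cech double complex of $\Gr^0\DR(\mEnd(E,\theta)_{\mathcal I})$; each slot lies in the prescribed graded piece because $f_{ij}$ preserves grading, $\w\theta_i$ shifts grading by $-1$, and $\w\theta_i\wedge\w\theta_i$ by $-2$. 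The cocycle identities and the independence of the cohomology class from the choice of local data follow from the same algebraic manipulations as in Lemma~\ref{2-cocycle_(V,nabla,Fil)}, the sole substantive change being that the rightmost identity $\theta^\End(\w\theta_i\wedge\w\theta_i)=0$ follows from associativity of wedge rather than from integrability of a connection. A standard unwinding then shows $[c(E,\theta,\Gr)]=0$ exactly when modifications $(\Delta f_{ij},\Delta\omega_i)$ in $\Gr^0$ and $\Gr^{-1}\otimes\Omega^1$ exist that reglue the local data into a global graded Higgs lift.

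For parts (2) and (3), given two graded lifts $(\w E,\w\theta,\w\Gr)$ and $(\w E,\w\theta,\w\Gr)'$ I choose local identity-lifting graded isomorphisms $f_i$ and set
\[
b\big((\w E,\w\theta,\w\Gr),(\w E,\w\theta,\w\Gr)'\big):=\Big(\iota^{-1}(f_j-f_i),\; \iota^{-1}(\w\theta'\circ f_i-f_i\circ\w\theta)\Big),
\]
a $1$-cocycle of $\Gr^0\DR(\mEnd(E,\theta)_{\mathcal I})$; the torsor assertion and its inverse construction copy Lemma~\ref{1-cocycle_(V,nabla,Fil)} and part (c) of the proof of Theorem~\ref{deformation_(V,nabla,Fil)}. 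An identity-lifting automorphism has the form $\mathrm{id}_{\w E}+\iota(\epsilon)$ with $\epsilon\in \mEnd(E)_{\mathcal I}$; compatibility with $\w\Gr$ forces $\epsilon\in\Gr^0\mEnd(E)_{\mathcal I}$ and commutation with $\w\theta$ forces $\theta^\End(\epsilon)=0$, which together identify the automorphism group with $\mathbb H^0\big(\Gr^0\DR(\mEnd(E,\theta)_{\mathcal I})\big)$. The only genuinely new bookkeeping compared to the filtered de Rham case is checking at every stage that objects built from $\mathcal O_X$-linear Higgs data land in the correct \emph{graded} summand; this is in fact a simplification, since gradings split whereas filtrations only quotient, and Higgs integrability $\w\theta\wedge\w\theta=0$ is a single algebraic identity rather than the flatness of a connection.
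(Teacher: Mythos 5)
Your proposal is correct and follows essentially the same route as the paper: the authors themselves prove Theorem~\ref{deformation_(E,theta,Gr)} by outlining exactly this translation of the proof of Theorem~\ref{deformation_(V,nabla,Fil)}, replacing connections by Higgs fields, filtrations by gradings, and $(\mathcal I,\mathrm d)$ by the trivial Higgs sheaf, with the same cocycles $c(E,\theta,\Gr)$ and $b$ and the same gluing and automorphism arguments. Your added remarks (e.g.\ that $\overline\theta^{\End}(\w\theta_i\circ\w\theta_i)=0$ is purely algebraic and that gradings split rather than merely filter) simply flesh out details the paper leaves to the reader.
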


\begin{proof}[Proof of Theorem~\ref{deformation_(E,theta,Gr)}]
The proof is almost the same as that of Theorem~\ref{deformation_(V,nabla,Fil)}. One only needs to replace connections with Higgs fields and filtrations with graded structures. We give an outline; the details are left to readers.

\textbf{First step.} As in Lemma~\ref{local_data_(V,nabla,Fil)}, one may find an open covering $\{U_i\}_{i\in I}$, local liftings $(\w E_i, \w\Gr_i)$ of the graded vector bundle $(E,\Gr)\mid_{U_i}$, local liftings $\w\theta_i$ (as an $\mathcal O_{\w X}$-linear map) of $\theta\mid_{U_i}$ with $\w\theta_i(\w\Gr^\ell \w E)\subset \w \Gr^{\ell -1}\w E \otimes \Omega^1_{\w X/\w S}(\log \w D)$ and local isomorphisms $f_{ij}\colon (\w E_i,\w\theta_i,\w\Gr_i)\mid_{\w U_{ij}} \rightarrow (\w E_i,\w\theta_i,\w\Gr_i)\mid_{\w U_{ij}}$ of graded vector bundles with graded linear maps. Note that here $\w\theta_i$ is only $\mathcal O_{\w X}$-linear and NOT integrable. This is the \emph{system of local data}.
                                                                                
\textbf{Second step.} As in Lemma~\ref{2-cocycle_(V,nabla,Fil)}, one shows that 
\begin{equation}\label{obs:c(E,theta,Gr)}
\w c(E,\theta,\Gr):= \Big(-(f_{jk}\circ f_{ij}-f_{ik}), (\w \theta_j\circ f_{ij} - f_{ij}\circ\w\theta_i), (\w\theta_i\circ\w\theta_i)\Big)
\end{equation}
defines an element $c(E,\theta,\Gr)$ in $\mathbb H^2\big(\mathrm{Gr}^0 \DR(\mEnd(E,\theta))_{\mathcal I}\big)$, which does not depend on the choice of the system of local data $(\{U_i\}_{i\in I},\{(\w E_i,\w\theta_i,\w\Gr_i)\}_i,\{f_{ij}\}_{ij}\})$.

\textbf{Third step.} Let $(\w E,\w\theta,\w\Gr)$ and $(\w E,\w\theta,\w\Gr)'$  be two liftings of the graded Higgs bundle $(E,\theta,\Gr)$. As in Lemma~\ref{local_isomorphism_(V,nabla,Fil)}, one find open covering $\{U_i\}_{i\in I}$ and local isomorphisms $f_i\colon (\w E,\w \Gr)\mid_{\w U_i} \rightarrow (\w E,\w \Gr)\mid_{\w U_i}$ for each $i\in I$.

\textbf{Forth step.} As in Lemma~\ref{1-cocycle_(V,nabla,Fil)}, one shows that  
\begin{equation}\label{torsor_(E,theta,Gr)}
\w b((\w E,\w\theta,\w\Gr),(\w E,\w\theta,\w\Gr)'):=((f_j-f_i),(\w\theta'\circ f_{i}-f_i\circ\w\theta))
\end{equation}
defines an element $b((\w E,\w\theta,\w\Gr),(\w E,\w\theta,\w\Gr)')$ in $\mathbb H^1\big(\mathrm{Gr}^0 \DR(\mEnd(E,\theta))_{\mathcal I}\big)$, which does not depend on the choice of the system of local data $(\{U_i\}_{i\in I},\{f_{i}\}_{i}\})$.

\textbf{Fifth step.}  Let $(\w E,\w\theta,\w\Gr)$ be a lifting of the graded Higgs bundle $(E,\theta,\Gr)$. As in Lemma~\ref{0-cocycle_(V,nabla,Fil)}, one shows that $\mathrm{id} + \epsilon $ is an automorphism of $(\w E,\w\theta,\w\Gr)$ for any $\epsilon \in \mathbb H^0\big(\mathrm{Gr}^0 \DR(\mEnd(E,\theta))_{\mathcal I}\big)$ and any automorphism of $(\w E,\w\theta,\w\Gr)$ which lifts the identity map is of this form.

\textbf{Sixth step.} As in the proof of Theorem~\ref{deformation_(V,nabla,Fil)}, one shows that 
\begin{itemize}
	\item[(a)] if the graded Higgs bundle $(E,\theta,\Gr)$ has a lifting $(\w E,\w\theta,\w\Gr)$, then $[c(E,\theta,\Gr)]=0$;
	\item[(b)] if $[c(E,\theta,\Gr)]=0$, then the graded Higgs bundle $(E,\theta,\Gr)$ is liftable;
	\item[(c)] Let $(\w E,\w\theta,\w\Gr)$ be a lifting of the graded Higgs bundle $(E,\theta,\Gr)$. For any element $\epsilon$ in $\mathbb H^1(\Gr^0\DR(\mEnd(E,\theta))\otimes I)$, there exists another lifting $(\w E,\w\theta,\w\Gr)'$ such that $\epsilon = b((\w E,\w\theta,\w\Gr),(\w E,\w\theta,\w\Gr)')$.
\end{itemize}
\end{proof}

Ignore all graded structure and follow the similar method, one shows the following classical result.
\begin{prop}\label{deformation_(E,theta)} Let $(E,\theta)$ be a Higgs bundle over $(X,D)/S$. Then
	\begin{itemize}
		\item[1).] the obstruction to lifting $(E,\theta)$ to a Higgs bundle over $(\w X,\w D)/\w S$ lies in $\mathbb H^2\big(\DR(\mEnd(E,\theta))_{\mathcal I}\big)$;
		\item[2).] if $(E,\theta)$ has lifting $(\widetilde{E},\widetilde{\theta})$, then the lifting set is an $\mathbb H^1\big(\DR(\mEnd(E,\theta))_{\mathcal I}\big)$-torsor;
		\item[3).] the infinitesimal automorphism group of $(\widetilde{E},\widetilde{\theta})$ over $(E,\theta)$ is $\mathbb H^0\big(\DR(\mEnd(E,\theta))_{\mathcal I}\big)$.
	\end{itemize}
\end{prop}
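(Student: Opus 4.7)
The plan is to mimic, \emph{mutatis mutandis}, the proof of Theorem~\ref{deformation_(E,theta,Gr)}, dropping all references to the grading structure and replacing $\Gr^0 \DR(\mEnd(E,\theta))_{\mathcal I}$ throughout with the full complex $\DR(\mEnd(E,\theta))_{\mathcal I}$. First I would produce a system of local data by choosing an affine open cover $\{U_i\}_{i\in I}$ of $X$ on which $E$ is free (Lemma~\ref{local freeness_(V,Fil)}, applied with the trivial filtration), picking arbitrary local liftings $\w E_i$ on $\w U_i$, picking arbitrary $\mathcal O_{\w X}$-linear local liftings $\w\theta_i$ of $\theta|_{U_i}$ (which a priori need not satisfy $\w\theta_i\wedge\w\theta_i = 0$), and picking local isomorphisms $f_{ij}\colon \w E_i|_{\w U_{ij}}\xrightarrow{\sim}\w E_j|_{\w U_{ij}}$ lifting the identity on $E|_{U_{ij}}$.

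Next I would construct the obstruction class. In direct analogy with (\ref{obs:c(E,theta,Gr)}), set
\[\w c(E,\theta) := \Big(-(f_{jk}\circ f_{ij}-f_{ik}),\; (\w\theta_j\circ f_{ij}-f_{ij}\circ\w\theta_i),\; (\w\theta_i\circ\w\theta_i)\Big).\]
Since $(E,\theta)$ is globally defined, every entry reduces to zero modulo $\mathcal I$, and Lemma~\ref{CanoIsom} together with Lemma~\ref{CanoInj} let us divide by $\mathcal I$ to obtain a triple $c(E,\theta)$ landing in the Cech bicomplex computing $\mathbb H^2\big(\DR(\mEnd(E,\theta))_\mathcal I\big)$. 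The three verifications that $c(E,\theta)$ is a $2$-cocycle, and that its class does not depend on the system of local data, are literal transcriptions of Lemma~\ref{2-cocycle_(V,nabla,Fil)}, with the integrability identity $\theta\wedge\theta = 0$ playing the role of $\nabla^2=0$ in the final compatibility.

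For parts (2) and (3), given two liftings $(\w E,\w\theta)$ and $(\w E,\w\theta)'$ I would pick local isomorphisms $f_i$ between them (no filtration/grading to preserve) and define
\[b((\w E,\w\theta),(\w E,\w\theta)') := \Big(\big(\iota^{-1}(f_j-f_i)\big)_{(i,j)},\; \big(\iota^{-1}(\w\theta'\circ f_i - f_i\circ\w\theta)\big)_i\Big),\]
showing by the argument of Lemma~\ref{1-cocycle_(V,nabla,Fil)} that this is a $1$-cocycle whose class in $\mathbb H^1\big(\DR(\mEnd(E,\theta))_\mathcal I\big)$ is well-defined and gives a bijection between isomorphism classes of liftings and this cohomology group. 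For (3), any automorphism of $(\w E,\w\theta)$ lifting $\mathrm{id}_E$ is of the form $\mathrm{id}_{\w E}+\iota(\epsilon)$ with $\epsilon\in \mathbb H^0\big(\DR(\mEnd(E,\theta))_\mathcal I\big)$, since Higgs-compatibility is equivalent to $\overline\theta^{\End}(\epsilon)=0$ via Lemma~\ref{CanoInj}.

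Finally I would prove the three-way equivalence underlying (1): (a) a globally defined lifting yields the trivial obstruction by taking $\w E_i = \w E|_{\w U_i}$, $\w\theta_i=\w\theta|_{\w U_i}$ and $f_{ij}=\mathrm{id}$; (b) if $c(E,\theta)$ is a coboundary, the witnessing cochain provides corrections $\Delta f_{ij}$ and $\Delta\omega_i$ such that $f_{ij}' := f_{ij}+\Delta f_{ij}$ and $\w\theta_i':=\w\theta_i-\Delta\omega_i$ satisfy the gluing cocycle and the integrability $\w\theta_i'\wedge\w\theta_i'=0$, producing a global lifting; (c) the torsor action is surjective by the explicit twist construction used in the proof of Theorem~\ref{deformation_(V,nabla,Fil)}(c), now with $\w\theta$ in place of $\w\nabla$. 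I do not foresee any genuine obstacle beyond the bookkeeping: the ungraded, connection-free setting is strictly simpler than the filtered de Rham case already treated, and every argument reduces to verifications already carried out in Lemmas~\ref{2-cocycle_(V,nabla,Fil)}, \ref{1-cocycle_(V,nabla,Fil)}, and \ref{0-cocycle_(V,nabla,Fil)}.
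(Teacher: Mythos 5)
Your proposal is correct and is essentially the paper's own argument: the paper proves the graded case (Theorem~\ref{deformation_(E,theta,Gr)}) by transposing the filtered de Rham proof, and then obtains this proposition by exactly your route, namely ignoring the grading and replacing $\Gr^0\DR(\mEnd(E,\theta))_{\mathcal I}$ by the full complex $\DR(\mEnd(E,\theta))_{\mathcal I}$, with $\theta\wedge\theta=0$ playing the role of $\nabla^2=0$ in the cocycle identities. No gap to report.
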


\section{Uniqueness of Hodge filtration}
Let $S$, $\w{S}$, $\mathfrak a$, $\w{X}$, $\w D$, $X$, $D$, $\mathcal I$ be as in Setup~\ref{setup}. In this section, the main result is Theorem~\ref{thm: corollary of E1 degenarate}.

Let $(V,\nabla,\Fil)$ be a logarithmic filtered de Rham bundle on $(X,D)/S$. Let $(E,\theta,\Gr)$ be the associated graded, which is a logarithmic graded Higgs bundle on $(X,D)/S$ as in Example~\ref{exa1}. Recall the hypercohomology spectral sequence attached to a filtered complex in \cite[1.4.5]{Del71}; one therefore has the following \emph{Hodge-de Rham spectral sequence}:
\begin{equation} \label{spectral sequence}
\mathbb H^{p+q}(\mathrm{Gr}^p \DR(\mEnd(E,\theta)_{\mathcal I})) \Rightarrow \mathbb H^{p+q}(\DR(\mEnd(V,\nabla)_\mathcal I)).
\end{equation}

Recall the complex $\mathscr C$ defined in~(\ref{complex_(Fil)}). One has a short exact sequence of complexes
\[0\rightarrow \Fil^0 \DR(\mEnd(V,\nabla)_\mathcal I) \overset{\iota}{\longrightarrow} 
\DR(\mEnd(V,\nabla)_\mathcal I)
\overset{\pi}{\longrightarrow}
 \mathscr C \rightarrow 0,\]
which induces a long exact sequence
\begin{equation} \label{long exact sequence}
\begin{split}
0 & \rightarrow 
\mathbb H^0\Big(\Fil^0 \DR(\mEnd(V,\nabla)_\mathcal I)\Big) 
\overset{\iota}{\longrightarrow}
\mathbb H^0\Big(\DR(\mEnd(V,\nabla)_\mathcal I) \Big)
\overset{\pi}{\longrightarrow}
\mathbb H^0\Big(\mathscr C \Big)\\
& \overset{\delta}{\longrightarrow} 
\mathbb H^1\Big(\Fil^0 \DR(\mEnd(V,\nabla)_\mathcal I)\Big) 
\overset{\iota}{\longrightarrow}
\mathbb H^1\Big(\DR(\mEnd(V,\nabla)_\mathcal I) \Big)
\overset{\pi}{\longrightarrow}
 \mathbb H^1\Big(\mathscr C \Big)\\
&  \overset{\delta}{\longrightarrow}  \mathbb H^2\Big(\Fil^0 \DR(\mEnd(V,\nabla)_\mathcal I)\Big) 
\overset{\iota}{\longrightarrow}
\mathbb H^2\Big(\DR(\mEnd(V,\nabla)_\mathcal I) \Big)
\overset{\pi}{\longrightarrow} \cdots\\
\end{split}
\end{equation}

Denote by  $\mathrm{Def}_{(V,\nabla)}(\widetilde X,\widetilde D)$ the set of deformations the of de Rham bundle to $(\widetilde X,\widetilde D)$:
\[\mathrm{Def}_{(V,\nabla)}(\widetilde X,\widetilde D) := \left\{ \text{ de Rham bundle $(\w V,\w\nabla)$ over $(\w X,\w D)/\w S$ with }  (\w V,\w\nabla)\mid_X = (V,\nabla)  \right\}/\sim ,\]                                                                
which is an $\mathbb H^{1}(\DR(\mEnd(V,\nabla)_\mathcal I))$-torsor space if it is not empty. Thus for any element $(\w V_{n+1},\w \nabla_{n+1}) \in \mathrm{Def}_{(V_{n},\nabla_{n})}(X_{n+1},D_{n+1})$ and any element $\varepsilon \in \mathbb H^{1}(\DR(\mEnd(V,\nabla)_\mathcal I))$, there exists a unique element  
$(\w V_{n+1}',\w \nabla_{n+1}') \in \mathrm{Def}_{(V_{n},\nabla_{n})}(X_{n+1},D_{n+1})$
such that 
\[b((\w V_{n+1},\w \nabla_{n+1}), (\w V_{n+1}',\w \nabla_{n+1}')) = \varepsilon\]
where $b$ is defined in Lemma~\ref{1-cocycle_(V,nabla,Fil)}. We will simply denote 
\begin{equation}
(\w V_{n+1},\w \nabla_{n+1}) + \varepsilon := (\w V_{n+1}',\w \nabla_{n+1}'). 
\end{equation}

Similarly denote by $\mathrm{Def}_{(V,\nabla,\Fil)}(\widetilde X,\widetilde D)$ the set of deformations of the filtered de Rham bundle
\[\mathrm{Def}_{(V,\nabla,\Fil)}(\widetilde X,\widetilde D) := \left\{ \text{ filtered de Rham bundle $(\w V,\w\nabla,\w\Fil) $over $(\w X,\w D)/\w S$ with }  (\w V,\w\nabla,\w\Fil)\mid_X = (V,\nabla,\Fil)  \right\}/\sim.\] 
One also has the notation $(\w V_{n+1},\w \nabla_{n+1},\w\Fil_{n+1}) + \varepsilon$ for any $(\w V_{n+1},\w \nabla_{n+1},\Fil) \in \mathrm{Def}_{(V,\nabla,\Fil)}(\widetilde X,\widetilde D)$ and any $\varepsilon \in \mathbb H^{1}(\Fil^0\DR(\mEnd(V,\nabla)_\mathcal I))$.


\begin{lem}\label{prop:splitting}
	Suppose that the Hodge-de Rham spectral sequence (\ref{spectral sequence}) degenerates at $E_1$. Then the long exact sequence~\ref{long exact sequence}
		splits into short exact sequences	
		\[0
		\rightarrow \mathbb H^k\Big(\Fil^0 \DR(\mEnd(V,\nabla)_\mathcal I)\Big) 
		\longrightarrow \mathbb H^k\Big(\DR(\mEnd(V,\nabla)_\mathcal I) \Big)
		\longrightarrow \mathbb H^k\Big(\mathscr C \Big) \rightarrow 0.\]
\end{lem}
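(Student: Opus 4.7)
The goal is to show that the connecting maps $\delta$ in the long exact sequence (\ref{long exact sequence}) all vanish; equivalently, that $\iota\colon \mathbb H^k(\Fil^0 \DR(\mEnd(V,\nabla)_\mathcal I)) \to \mathbb H^k(\DR(\mEnd(V,\nabla)_\mathcal I))$ is injective for every $k$. Once this is established, exactness of (\ref{long exact sequence}) forces $\mathrm{Im}(\delta) = \ker(\iota) = 0$, and the long exact sequence immediately breaks into the desired short exact sequences.

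\textbf{Step 1.} Record the standard reformulation: the Hodge--de Rham spectral sequence (\ref{spectral sequence}) arises from the filtered complex $\bigl(\DR(\mEnd(V,\nabla)_{\mathcal I}), \Fil^{\bullet}\bigr)$, whose graded pieces are identified with $\Gr^p \DR(\mEnd(E,\theta)_\mathcal I)$ by Lemma~\ref{exa2}. It is a classical fact (see \cite[Theorem~1.3.16]{Del71} or any standard reference) that degeneration of such a spectral sequence at $E_1$ is equivalent to strictness of the filtration on hypercohomology; that is, $\mathbb H^k(\Fil^p \DR(\mEnd(V,\nabla)_{\mathcal I})) \to \mathbb H^k(\DR(\mEnd(V,\nabla)_{\mathcal I}))$ is injective for every $p,k$, and its image coincides with the $p$-th step of the induced filtration.

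\textbf{Step 2.} Specialize to $p=0$: strictness yields injectivity of $\iota$ in every degree $k$. From the long exact sequence (\ref{long exact sequence}), exactness at $\mathbb H^k(\Fil^0\DR(\mEnd(V,\nabla)_\mathcal I))$ gives $\mathrm{Im}(\delta) = \ker(\iota) = 0$, so every connecting map $\delta$ vanishes. The long exact sequence therefore splits into the short exact sequences
\[
0 \to \mathbb H^k\bigl(\Fil^0 \DR(\mEnd(V,\nabla)_\mathcal I)\bigr) \xrightarrow{\iota} \mathbb H^k\bigl(\DR(\mEnd(V,\nabla)_\mathcal I)\bigr) \xrightarrow{\pi} \mathbb H^k(\mathscr C) \to 0
\]
for all $k \geq 0$, as claimed.

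\textbf{Step 3 (self-contained backup).} If one wishes to avoid invoking the general equivalence, one can prove injectivity of $\iota$ by induction on the length of the filtration. By Lemma~\ref{exa2}, $\mathscr C$ is a successive extension of the complexes $\Gr^{-\ell}\DR(\mEnd(E,\theta)_\mathcal I)$ for $\ell \geq 1$, so one obtains short exact sequences $0 \to \Fil^{p+1}\DR \to \Fil^p\DR \to \Gr^p\DR(\mEnd(E,\theta)_\mathcal I) \to 0$ for each $p \leq 0$. The associated long exact sequences in hypercohomology, combined with the $E_1$-degeneration hypothesis (which precisely says that the maps $d_1$ and all higher differentials vanish, so every $\mathbb H^k(\Gr^p\DR(\mEnd(E,\theta)_\mathcal I))$ injects into $\mathbb H^k(\Fil^p\DR)/\mathbb H^k(\Fil^{p+1}\DR)$), yield the injectivity of $\mathbb H^k(\Fil^p\DR) \to \mathbb H^k(\DR)$ by induction from large $p$ downward to $p=0$.

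\textbf{Main obstacle.} There is no serious obstacle: the content of the lemma is essentially the translation of "$E_1$-degeneration" into the more concrete statement that $\iota$ is injective. The only point requiring care is to make sure that the $E_1$-degeneration hypothesis is used with respect to the correct filtration---namely the one on $\DR(\mEnd(V,\nabla)_\mathcal I)$ whose graded pieces are $\Gr^p \DR(\mEnd(E,\theta)_\mathcal I)$---which is guaranteed by Lemma~\ref{exa2}.
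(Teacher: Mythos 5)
Your proposal is correct and follows essentially the same route as the paper: the paper's proof likewise observes that $E_1$-degeneration forces $\mathbb H^k\bigl(\Fil^0\DR(\mEnd(V,\nabla)_{\mathcal I})\bigr)\to\mathbb H^k\bigl(\DR(\mEnd(V,\nabla)_{\mathcal I})\bigr)$ to be injective, so the connecting maps vanish and the long exact sequence splits. Your Steps 1--3 simply spell out (via strictness, resp. the inductive backup) what the paper asserts in one line.
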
  
\begin{proof} By the definition of $E_1$ degenaration, $\mathbb H^k\Big(\Fil^0 \DR(\mEnd(V,\nabla)_\mathcal I)\Big) 
		\rightarrow \mathbb H^k\Big(\DR(\mEnd(V,\nabla)_\mathcal I) \Big)$ is an injective map. Thus the lemma follows.
\end{proof}

\begin{thm}\label{thm: corollary of E1 degenarate}
	Suppose that the spectral sequence (\ref{spectral sequence}) degenerates at $E_1$. Then
	\begin{itemize}
		\item[(1).] If $(V,\nabla)$ is liftable, then $(V,\nabla,\Fil)$ is also liftable.
		\item[(2).] the map $\mathrm{Def}_{(V,\nabla,\Fil)}(\w X,\w D) \rightarrow \mathrm{Def}_{(V,\nabla)}(\w X,\w D)$ is injective. In other words, if $(\w V,\w \nabla)$ is a lifting of the underlying de Rham bundle of $(V,\nabla,\Fil)$, then for two liftings $\w\Fil$ and $\w\Fil'$ of the Hodge filtration $\Fil$ there exists an isomorphism
		\[(\w V,\w \nabla,\w \Fil)\overset{\simeq}{\longrightarrow} (\w V,\w \nabla,\w \Fil')\]
		lifting the identity over $S$.
	\end{itemize}
\end{thm}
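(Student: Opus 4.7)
The key tool is Lemma~\ref{prop:splitting}: under the $E_1$-degeneration hypothesis, the natural map $\iota\colon \mathbb H^k\big(\Fil^0 \DR(\mEnd(V,\nabla)_{\mathcal I})\big) \to \mathbb H^k\big(\DR(\mEnd(V,\nabla)_{\mathcal I})\big)$ is an injection for every $k$. I would combine this with the naturality of the tangent-obstruction classes from Theorem~\ref{deformation_(V,nabla,Fil)} and Proposition~\ref{deformation_(V,nabla)}: forgetting the filtration sends the obstruction class (resp.\ torsor class) of a filtered de Rham bundle to the corresponding class of the underlying de Rham bundle, because the \w Cech cocycles in (\ref{obs:c(v,nabla,Fil)}) and (\ref{torsor_(V,nabla,Fil)}) are given by the same formulas in both settings; only the local data is more restricted in the filtered case.

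\textbf{Execution of (1).} Let $o \in \mathbb H^2\big(\Fil^0 \DR(\mEnd(V,\nabla)_{\mathcal I})\big)$ denote the obstruction to lifting $(V,\nabla,\Fil)$. Any system of local data for $(V,\nabla,\Fil)$ in the sense of Lemma~\ref{local_data_(V,nabla,Fil)} is in particular a system of local data for $(V,\nabla)$, so by the naturality just described, $\iota(o)$ equals the obstruction to lifting $(V,\nabla)$, which vanishes by hypothesis. Injectivity of $\iota$ in degree $2$ then forces $o=0$, and $(V,\nabla,\Fil)$ lifts by Theorem~\ref{deformation_(V,nabla,Fil)}.

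\textbf{Execution of (2).} The two filtered lifts $(\w V,\w\nabla,\w\Fil)$ and $(\w V,\w\nabla,\w\Fil')$ of $(V,\nabla,\Fil)$ differ by a class $\varepsilon \in \mathbb H^1\big(\Fil^0 \DR(\mEnd(V,\nabla)_{\mathcal I})\big)$ in the torsor structure of Theorem~\ref{deformation_(V,nabla,Fil)}(2). By naturality $\iota(\varepsilon)$ represents the analogous torsor class measuring the difference of the underlying de Rham lifts; since both are literally $(\w V,\w\nabla)$, this image vanishes. Injectivity of $\iota$ in degree $1$ yields $\varepsilon = 0$, which by the torsor interpretation provides an isomorphism $(\w V,\w\nabla,\w\Fil) \xrightarrow{\sim} (\w V,\w\nabla,\w\Fil')$ of filtered de Rham bundles lifting $\mathrm{id}_{(V,\nabla,\Fil)}$.

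\textbf{Main obstacle.} The only non-formal step is the naturality assertion. To verify it I would reinspect the explicit cocycle~(\ref{obs:c(v,nabla,Fil)}), noting that the same quadruple $\big(\{U_i\}_i, \{(\w V_i,\w\Fil_i)\}_i, \{f_{ij}\}_{ij}, \{\w\nabla_i\}_i\big)$ of local data serves, after forgetting $\w\Fil_i$, as a valid system of local data for $(V,\nabla)$ in the sense of Proposition~\ref{deformation_(V,nabla)}, and the resulting cocycle is by construction the image under $\iota$ of the original one. The same verification applies to the $1$-cocycle~(\ref{torsor_(V,nabla,Fil)}) using Lemma~\ref{local_isomorphism_(V,nabla,Fil)}. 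Everything else is a mechanical consequence of Lemma~\ref{prop:splitting} together with the obstruction and torsor descriptions already established.
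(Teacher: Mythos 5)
Your proof is correct and follows essentially the same approach as the paper's. The naturality you spell out in your ``main obstacle'' paragraph — that $\iota$ sends the filtered obstruction/torsor classes to the corresponding unfiltered ones, because a filtered system of local data is (after forgetting the filtrations) a valid unfiltered one and the cocycle formulas coincide — is exactly the content of the paper's Lemma~\ref{torsor map}, which the paper invokes directly and whose proof it dispatches with ``follows the definition of $c$ and $b$.''
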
 
\begin{rmk}
	(1) in Theorem~\ref{thm: corollary of E1 degenarate} means that if $(V,\nabla)$ has a lifting $(\widetilde{V},\widetilde{\nabla})$ then  $(V,\nabla,\Fil)$ has some lifting $(\widetilde{V}',\widetilde{\nabla}',\widetilde{\Fil}')$. In general, the underlying de Rham bundle of $(\widetilde{V}',\widetilde{\nabla}',\widetilde{\Fil}')$ is not equal $(\widetilde{V},\widetilde{\nabla})$ and there exists obstruction to lift the Hodge filtration to $(\widetilde{V},\widetilde{\nabla})$.
\end{rmk}

\begin{lem}\label{torsor map} Let $(V,\nabla,\Fil)$ be a filtered de Rham bundle. Then
	\begin{itemize}
		\item[(1).] $\iota(c(V,\nabla,\Fil)) = c(V,\nabla)$;
		\item[(3).] Suppose $c(V,\nabla,\Fil)=0$. Let $(\w V,\w\nabla,\w\Fil)$ and $(\w V,\w\nabla,\w\Fil)'$ be two liftings of the filtered de Rham bundle $(V,\nabla,\Fil)$. Then
		\[\iota(b((\w V,\w\nabla,\w\Fil),(\w V,\w\nabla,\w\Fil)')) = b((\w V,\w\nabla),(\w V,\w\nabla)').\]
		In other words, for any given lifting $(\w V,\w\nabla,\w\Fil)\in \mathrm{Def}_{(V,\nabla,\Fil)}(\w X,\w D)$ one has the following commutative diagram
		\begin{equation}
		\xymatrix{
			\mathrm{Def}_{(V,\nabla,\Fil)}(\widetilde X,\widetilde D)\ar[r]
			\ar@/_10pt/[d]_{(\w V,\w\nabla,\w\Fil)'\mapsto b((\w V,\w\nabla,\w\Fil),(\w V,\w\nabla,\w\Fil)')}
			& \mathrm{Def}_{(V,\nabla)}(\widetilde X,\widetilde D) 
			\ar@/^10pt/[d]^{(\w V,\w\nabla)'\mapsto b((\w V,\w\nabla),(\w V,\w\nabla)')}\\
			\mathbb H^1\Big(\Fil^0 \DR(\mEnd(V,\nabla)_\mathcal I)\Big) \ar[r]^\iota \ar@/_10pt/[u]_{\epsilon \mapsto (\w V,\w\nabla) +\epsilon} &
			\mathbb H^1\Big(\DR(\mEnd(V,\nabla)_\mathcal I) \Big) \ar@/^10pt/[u]^{\epsilon \mapsto (\w V,\w\nabla) +\epsilon}\\}
		\end{equation}	
	\end{itemize}
\end{lem}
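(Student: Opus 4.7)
\textbf{Proof proposal for Lemma~\ref{torsor map}.} The strategy is uniform: both sides of each claimed equality are computed from Čech cocycles with respect to an open cover, using explicit formulas involving the \emph{same} local data. The inclusion $\iota\colon \Fil^0 \DR(\mEnd(V,\nabla)_\mathcal I)\hookrightarrow \DR(\mEnd(V,\nabla)_\mathcal I)$ is a morphism of complexes, and the induced map on hypercohomology is computed by simply re-interpreting a Čech cocycle valued in the subcomplex as a Čech cocycle valued in the full complex. So the core of the argument is to pick compatible systems of local data and observe that the defining formulas coincide.

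For part~(1), start with a system of local data $\big(\{U_i\}_i,\{(\w V_i,\w\Fil_i)\}_i,\{f_{ij}\}_{ij},\{\w\nabla_i\}_i\big)$ for the filtered de Rham bundle $(V,\nabla,\Fil)$, as guaranteed by Lemma~\ref{local_data_(V,nabla,Fil)}. Forgetting the filtration, the quadruple $\big(\{U_i\}_i,\{\w V_i\}_i,\{f_{ij}\}_{ij},\{\w\nabla_i\}_i\big)$ is a system of local data for the de Rham bundle $(V,\nabla)$ in the sense of Proposition~\ref{deformation_(V,nabla)}, since $f_{ij}$ still lifts the identity and $\w\nabla_i$ still lifts $\nabla|_{U_i}$. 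The defining formula~(\ref{obs:c(v,nabla,Fil)}) for $c(V,\nabla,\Fil)$ is then literally the same triple of Čech data that defines $c(V,\nabla)$; only the complex in which it is viewed as a cocycle is different. Under $\iota$, the former maps to the latter, yielding $\iota(c(V,\nabla,\Fil))=c(V,\nabla)$.

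For part~(3), the argument is the same. Choose a system of local data $\big(\{U_i\}_i,\{f_i\}_i\big)$ as in Lemma~\ref{local_isomorphism_(V,nabla,Fil)} for the pair of filtered liftings $(\w V,\w\nabla,\w\Fil)$ and $(\w V,\w\nabla,\w\Fil)'$. Since the two filtered liftings share the \emph{same} underlying de Rham lifting $(\w V,\w\nabla)$, the pair $(\{U_i\}_i,\{f_i\}_i)$ simultaneously serves as a system of local data for the comparison $b((\w V,\w\nabla),(\w V,\w\nabla)')$ in the sense of Proposition~\ref{deformation_(V,nabla)}. The two defining formulas
\[
b\big((\w V,\w\nabla,\w\Fil),(\w V,\w\nabla,\w\Fil)'\big)=\Big(\big(\iota^{-1}(f_j-f_i)\big)_{(i,j)},\big(\iota^{-1}(\w\nabla'\circ f_i-f_i\circ\w\nabla)\big)_i\Big)
\]
and the analogous one defining $b((\w V,\w\nabla),(\w V,\w\nabla)')$ are then given by the \emph{same} Čech data, again differing only by the complex in which the cocycle is interpreted. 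Hence $\iota$ maps one to the other.

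Finally, the commutative square in part~(3) is the combination of the above identity with the construction of the torsor action: by the notation introduced after Theorem~\ref{deformation_(V,nabla,Fil)}, $(\w V,\w\nabla,\w\Fil)+\varepsilon$ and $(\w V,\w\nabla)+\iota(\varepsilon)$ are constructed from the same local cocycle representative, and the proofs of (c) in Theorem~\ref{deformation_(V,nabla,Fil)} and Proposition~\ref{deformation_(V,nabla)} show that the underlying de Rham bundle of $(\w V,\w\nabla,\w\Fil)+\varepsilon$ is precisely $(\w V,\w\nabla)+\iota(\varepsilon)$. Combining this with the identity $\iota\circ b_{\mathrm{Fil}}=b_{\mathrm{dR}}$ proved above gives the diagram. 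There is no real obstacle; the only care needed is to track which complex each cocycle lives in and to check that a single local system computes both sides.
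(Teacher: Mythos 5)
Your proposal is correct and takes essentially the same approach as the paper, whose entire proof is the remark that the lemma ``follows the definition of $c$ and $b$''; your write-up is precisely that unwinding: a filtered system of local data, with the filtrations forgotten, computes both cocycles by the very same \v{C}ech formulas, and $\iota$ identifies the one viewed in $\Fil^0\DR(\mEnd(V,\nabla)_\mathcal I)$ with the one viewed in $\DR(\mEnd(V,\nabla)_\mathcal I)$. One small inaccuracy: in part (3) the two filtered liftings need not have the same underlying de Rham lifting (the prime is on the whole triple $(\w V,\w\nabla,\w\Fil)'$), but this is harmless for your argument, since the local isomorphisms $f_i$ lift $\mathrm{id}_V$ in either case and the same local data still computes both $b$'s.
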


\begin{proof}
 This follows the definition of $c$ and $b$.
\end{proof}

\begin{proof}[Proof of Theorem~\ref{thm: corollary of E1 degenarate}]
By Lemma~\ref{prop:splitting}, the $\iota$'s are injective. Thus lemma follows Lemma~\ref{torsor map}.
\end{proof}

\section{Lifting a Higgs-de Rham flow.}\label{section:lifting_HDF}

In this section we will study the deformation theory of a 1-periodic Higgs-de Rham flow. The main result is Theorem~\ref{main_thm}.

Let $k$ be a perfect field of odd characteristic $p$,  and set $S_m = \Spec (W_m(k))$ for all $m\in \mathbb N$. 
In this section we will fix an natural number $n$. Then $S_{n} \hookrightarrow S_{n+1}$ is square-zero thickening with ideal of definition $\mathfrak{a}_{n+1} = p^n W_{n+1}(k)$. 
Let $X_{n+1}$ be a proper smooth $S_{n+1}$-scheme with relative flat normal crossing divisor $D_{n+1}$. 
Denote  $X_n =X_{n+1}\times_{S_{n+1}}S_n$, $D_n=D_{n+1} \times_{S_{n+1}}S_n$ and the square-zero ideal sheaf $\mathcal{I}_{n+1}:= \mathcal{O}_{X_{n+1}} \cdot \mathfrak{a}_{n+1}$. Thus
\[(\w S,S,\w X,X, \w D,D,\mathfrak{a},\mathcal I):=(S_{n+1},S_{n},X_{n+1},X_{n}, D_{n+1},D_{n},\mathfrak{a}_{n+1},\mathcal I_{n+1}).\]
is compatible with Setup \ref{setup}. 
Denote $X_1 = X_{n+1}\times_{S_{n+1}}S_1$ and $D_1=D_{n+1}\times_{S_{n+1}}S_1$. Since $p\cdot \mathfrak a =0$, $\mathcal I$ is an $\mathcal O_{X_1}$-module. More precisely, one has an isomorphism $\mathcal I \simeq \mathcal O_{X_1}$. Thus for any vector bundle $\mathcal F$ over $X$, one has 
\begin{equation}\label{level_1}
\mathcal F_{\mathcal I}\simeq \mathcal F\mid_{X_1}.
\end{equation}

In this section, we set
\begin{equation}\label{eqn:1_periodic}
\xymatrix{  
	& (V_n,\nabla_n,\Fil_n) \ar[dr]^{\Gr}
	& \\
	(E_n,\theta_n) \ar[ur]^{C^{-1}} 
	&&  \Gr (V_n,\nabla_n,\Fil_n) \ar@/^10pt/[ll]_{\varphi}\\
}
\end{equation} 
to be a $1$-periodic Higgs-de Rham flow over $(X_n,D_n)/S_n$ where the inverse Cartier $C^{-1}$ is defined with respect to the lift $$(X_n,D_n)/S_n\subset (X_{n+1},D_{n+1})/S_{n+1}.$$
Denote
\[(V_1,\nabla_1,\Fil_1,E_1,\theta_1,\varphi_1) = (V,\nabla,\Fil,E,\theta,\varphi)\mid_{X_1}\]
By (\ref{level_1}), the complex~\ref{complex_(V,nabla)} may be rewritten as a complex of sheaves over $X_1$
\begin{equation} \label{complex_(V,nabla)_W}
0 \rightarrow \mEnd(V_1)  \overset{\nabla_1^\End}{\longrightarrow} \mEnd(V_1)\otimes\Omega_{X_1/S_1}^1(\log D_1) \overset{\nabla_1^\End}{\longrightarrow}  \mEnd(V_1)\otimes\Omega_{X_1/S_1}^2(\log D_1) \overset{\nabla_1^\End}{\longrightarrow} \cdots
\end{equation}
which is just the de Rham complex of the de Rham bundle $\mEnd(V_1,\nabla_1)$ over $X_1$.

Similar the complexes~\ref{complex_(V,nabla,Fil)} and~\ref{complex_(Fil)} may be rewritten as 
\begin{equation}  \label{complex_(V,nabla,Fil)_W}
\Fil^\ell \DR(\mEnd(V_1,\nabla_1)): \qquad 0 \rightarrow \Fil^\ell \mEnd(V_1) \overset{\nabla_1^\End}{\longrightarrow} \Fil^{\ell-1}\mEnd(V_1)\otimes\Omega_{X_1/S_1}^1(\log D_1)  \overset{\nabla_1^\End}{\longrightarrow} \cdots
\end{equation}
and 
\begin{equation}  \label{complex_(Fil)_W}
\mathscr C=\frac{\DR(\mEnd(V_1,\nabla_1))}{\Fil^0\DR(\mEnd(V_1,\nabla_1))}: 
\qquad 0 \rightarrow \frac{\mEnd(V_1)}{\Fil^0 \mEnd(V_1)}  \overset{\nabla_1^\End}{\longrightarrow} \frac{\mEnd(V_1)}{\Fil^{-1} \mEnd(V_1)}\otimes\Omega_{X/S}^1(\log D_1)  \overset{\nabla_1^\End}{\longrightarrow} \cdots
\end{equation}

\begin{lem}\label{E1 degenarate}
	Setup as in the beginning of the subsection and suppose $(E_n,\theta_n)$ is a logarithmic Higgs bundle on $(X_n,D_n)/S_n$ that initiates a 1-periodic Higgs-de Rham flow as in Equation \ref{eqn:1_periodic}. Then the Hodge-de Rham spectral sequence associated to the reduction modulo $p$:
	\[ E^{p,q}_1 := \mathbb H^{p+q}\big(\Gr^p\DR(\mEnd(E_1,\theta_1))\big)\Rightarrow  \mathbb H^{p+q}\big(\DR(\mEnd(V_1,\nabla_1))\big) \]
	 degenerates at $E_1$.
\end{lem}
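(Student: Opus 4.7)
The plan is to invoke the mod-$p$ Higgs-de Rham correspondence of Ogus-Vologodsky and Faltings, which reduces the degeneration of the Hodge-de Rham spectral sequence to a formal consequence of a filtered quasi-isomorphism between the filtered de Rham complex and the Higgs complex of its associated graded. The two hypotheses needed to apply this correspondence are (i) a lift of $X_1$ to $W_2(k)$, and (ii) that the filtered de Rham bundle in question lies in the essential image of the inverse Cartier transform $C^{-1}$.

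First I would verify these hypotheses. The lift in (i) is provided by $X_2 := X_{n+1} \times_{S_{n+1}} S_2$, and $C^{-1}$ is defined precisely with respect to this thickening. For (ii), the very construction of the Higgs-de Rham flow gives $(V_n,\nabla_n) = C^{-1}(E_n,\theta_n)$, and the periodicity isomorphism $\varphi$ supplies an identification $\mathrm{Gr}(V_n,\nabla_n,\Fil_n)\cong (E_n,\theta_n)$. Reducing modulo $p$ and transporting the grading from $\mathrm{Gr}(V_1,\Fil_1)$ to $E_1$ via $\varphi_1$, we conclude that $(V_1,\nabla_1,\Fil_1) = C^{-1}(E_1,\theta_1,\mathrm{Gr}_1)$ as filtered de Rham bundles. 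Since $C^{-1}$ is compatible with tensor operations and duals, the same identification persists for endomorphism bundles: $\mEnd(V_1,\nabla_1,\Fil_1) = C^{-1}(\mEnd(E_1,\theta_1,\mathrm{Gr}_1))$.

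Next I would apply Faltings' theorem (generalizing Deligne-Illusie in the constant-coefficient case): any filtered de Rham bundle in the essential image of $C^{-1}$ admits, after a Frobenius twist, a filtered quasi-isomorphism with the Higgs complex of its associated graded. Passing to hypercohomology, such a filtered quasi-isomorphism is equivalent to the $E_1$-degeneration of the associated Hodge-de Rham spectral sequence, which gives exactly the statement of the lemma applied to $\mEnd(V_1,\nabla_1,\Fil_1)$.

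The main obstacle is the usual ``smallness'' hypothesis required for Faltings' theorem, namely that the length of the Hodge filtration on $\mEnd(V_1,\Fil_1)$ be strictly less than $p$; this is essentially twice the length of the filtration on $V_1$, so a filtration length bound of at most $(p-1)/2$ on the initial Higgs bundle suffices. This bound is the standing assumption built into the periodic Higgs-de Rham framework, already needed for $C^{-1}$ to be defined on $(E_n,\theta_n)$, so no extra verification is required beyond citing the setup.
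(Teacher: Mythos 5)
Your overall skeleton --- reduce modulo $p$, use $(V_1,\nabla_1)=C^{-1}(E_1,\theta_1)$ together with compatibility of $C^{-1}$ with endomorphism bundles, and then invoke the characteristic-$p$ nonabelian Hodge correspondence --- matches the paper's starting point, but the step you hang everything on is not an available theorem. What Ogus--Vologodsky \cite[Corollary 2.27]{OgVo07} (and Faltings) actually provide is an \emph{unfiltered} isomorphism $\sigma^*\mathbb H^{n}\big(\DR(\mEnd(E_1,\theta_1))\big)\simeq \mathbb H^{n}\big(\DR(\mEnd(V_1,\nabla_1))\big)$, coming from a derived-category (Cartier) comparison of the complexes; there is no result producing a \emph{filtered} quasi-isomorphism between $\DR(\mEnd(V_1,\nabla_1))$ with its Hodge filtration and the Higgs complex of the associated graded. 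Note also that $C^{-1}$ outputs only a de Rham bundle, so the identity ``$(V_1,\nabla_1,\Fil_1)=C^{-1}(E_1,\theta_1,\Gr_1)$ as filtered de Rham bundles'' is not meaningful: $\Fil_1$ is an independent piece of data, constrained only by $\Gr_{\Fil_1}(V_1,\nabla_1)\cong(E_1,\theta_1)$ via $\varphi_1$, and it is exactly the interaction of this filtration with the Cartier comparison that your claimed theorem would have to control. A filtered quasi-isomorphism of the kind you assert would already encode $E_1$-degeneration (and strictness), so citing it is circular unless you prove it, and proving it is at least as hard as the lemma itself.

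What is missing is precisely the Deligne--Illusie style dimension count that constitutes the paper's proof: since the Higgs complex splits as the direct sum of its graded pieces, the total dimension of the $E_1$-terms in total degree $n$ equals $\dim_k\mathbb H^{n}\big(\DR(\mEnd(E_1,\theta_1))\big)$; since the $E_\infty$-terms are subquotients of the $E_1$-terms, this is $\geq\dim_k\mathbb H^{n}\big(\DR(\mEnd(V_1,\nabla_1))\big)$, with equality if and only if the spectral sequence degenerates at $E_1$; and the unfiltered Ogus--Vologodsky isomorphism forces equality. Inserting this counting argument closes the gap and makes any appeal to a filtered statement unnecessary. A secondary inaccuracy: the bound you invoke at the end (Hodge width at most $(p-1)/2$ being ``the standing assumption'' of the periodic Higgs--de Rham framework) is not part of the setup --- the framework of \cite{LSZ13a} only requires width at most $p-1$ (resp. $p-2$ in the logarithmic case) for $C^{-1}$ to be defined --- so the level condition needed to apply the correspondence to $\mEnd(E_1,\theta_1)$ is genuinely an additional hypothesis, one which the paper itself also leaves implicit when citing \cite{OgVo07} for the endomorphism bundle.
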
 
\begin{proof} First of all, the terms of $E_\infty$ are subquotients of that of $E_1$; therefore one has 
		$$\mathrm{rank}_k\Big( \mathbb H^{p+q}(\DR(\mEnd(E_1,\theta_1))) \Big) \geq \mathrm{rank}_k \Big( \mathbb H^{p+q}(\DR(\mEnd(V_1,\nabla_1))) \Big),$$
		and moreover the spectral sequence is degenerate at $E_1$ if and only if equality holds.	
		On the other hand, by Ogus-Vologodsky,  $(V,\nabla) = C^{-1}(E,\theta)$ induces an isomorphism 
		\[ \sigma^*\mathbb H^{p+q}(\DR(\mEnd(E_1,\theta_1))) \simeq \mathbb H^{p+q}(\DR(\mEnd(V_1,\nabla_1))).\]
		In particular, one has $\mathrm{rank}_k\Big(\mathbb H^{p+q}\big(\DR(\mEnd(E_1,\theta_1))\big)\Big) = \mathrm{rank}_k \Big( \mathbb H^{p+q}(\DR(\mEnd(V_1,\nabla_1))) \Big)$ and $E_1$ degeneration.

\end{proof}

\begin{thm}\label{uniqueHDF} Let $(X,D)/W$ be a smooth pair with $X/W$ projective. Let $HDF_{X_n}$ be a periodic Higgs-de Rham flow over $(X_n,D_n)$. For a given lifting $(E,\theta)_{X_{n+1}}$ of the initial graded Higgs bundle over $(X_{n+1},D_{n+1})$, there is at most one Higgs-de Rham flow with initial term  $(E,\theta)_{X_{n+1}}$ that lifts $HDF_{X_n}$ up to isomorphism. 
\end{thm}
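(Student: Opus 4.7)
The plan is to dismantle the data of a Higgs-de Rham flow into three constituent pieces and show that each piece is unique up to isomorphism given the initial Higgs bundle and the reduction $HDF_{X_n}$.

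First, because the parametrized inverse Cartier transform $C^{-1}$ is functorial in the input Higgs bundle (for the fixed thickening $(X_n,D_n)\subset (X_{n+1},D_{n+1})$ and the level-$n$ datum $(V_n,\nabla_n,\Fil_n,\varphi_n)$), the de Rham bundle $(V,\nabla)_{X_{n+1}}=C^{-1}\bigl((E,\theta)_{X_{n+1}}\bigr)$ is canonically determined. Hence any two lifts $HDF^{(1)}_{X_{n+1}}$ and $HDF^{(2)}_{X_{n+1}}$ with the prescribed initial term share this underlying de Rham bundle, and they can only differ through their Hodge filtrations $\Fil^{(1)},\Fil^{(2)}$, the induced intermediate graded Higgs bundles, and the periodicity isomorphisms $\varphi^{(1)}_{n+1},\varphi^{(2)}_{n+1}$.

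Next I would apply Lemma~\ref{E1 degenarate}: the Hodge-de Rham spectral sequence attached to $\mEnd(V_1,\nabla_1,\Fil_1)$ degenerates at $E_1$. Theorem~\ref{thm: corollary of E1 degenarate}(2) then supplies an isomorphism of filtered de Rham bundles
\[
\beta\colon (V,\nabla,\Fil^{(1)})_{X_{n+1}}\xrightarrow{\sim}(V,\nabla,\Fil^{(2)})_{X_{n+1}}
\]
lifting the identity modulo $p^n$. Passing to associated gradeds yields an isomorphism $\Gr(\beta)$ between the two intermediate graded Higgs bundles, again lifting the identity. Composing with the periodicity maps defines
\[
\alpha:=\varphi^{(2)}_{n+1}\circ\Gr(\beta)\circ(\varphi^{(1)}_{n+1})^{-1},
\]
an automorphism of $(E,\theta)_{X_{n+1}}$ lifting the identity modulo $p^n$. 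The triple $(\alpha,\beta,\Gr(\beta))$ is the candidate isomorphism of Higgs-de Rham flows.

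The hard part will be matching the data across the inverse Cartier transform, namely checking $C^{-1}(\alpha)=\beta$ on the underlying de Rham bundle; this compatibility is what makes $(\alpha,\beta,\Gr(\beta))$ into a genuine isomorphism of HDF's rather than merely a collection of stage-wise isomorphisms. To handle it I would observe that $C^{-1}(\alpha)\circ\beta^{-1}$ is a filtered automorphism of $(V,\nabla,\Fil^{(2)})_{X_{n+1}}$ lifting the identity, and thus lies in $\mathbb H^0\bigl(\Fil^0\DR(\mEnd(V,\nabla)_{\mathcal I})\bigr)$ by Theorem~\ref{deformation_(V,nabla,Fil)}(3). Since $\beta$ is only determined up to this torsor (Theorem~\ref{deformation_(V,nabla,Fil)}(2)) and a corresponding adjustment of $\alpha$ is forced by the functoriality of $\Gr$ and the periodicity maps, one uses the faithfulness of $C^{-1}$ on automorphisms of $(E,\theta)_{X_{n+1}}$ lifting the identity (Ogus--Vologodsky) to absorb the discrepancy and enforce $C^{-1}(\alpha)=\beta$. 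Once this compatibility is arranged, the triple $(\alpha,\beta,\Gr(\beta))$ reduces to $(\mathrm{id},\mathrm{id},\mathrm{id})$ modulo $p^n$ and therefore constitutes the required isomorphism of Higgs-de Rham flows over $HDF_{X_n}$.
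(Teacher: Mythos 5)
Your proof assumes the lift is a \emph{periodic} Higgs-de Rham flow: you invoke periodicity isomorphisms $\varphi^{(1)}_{n+1},\varphi^{(2)}_{n+1}$ at level $n+1$ and try to construct an automorphism $\alpha=\varphi^{(2)}_{n+1}\circ\Gr(\beta)\circ(\varphi^{(1)}_{n+1})^{-1}$ of the initial bundle so that the loop closes. But the theorem is about a \emph{Higgs-de Rham flow} (an unbounded sequence), not a periodic one, and the paper's remark immediately after it stresses exactly this distinction: the periodicity map need not lift, and even when it does the lift can fail to be unique. A lift of $HDF_{X_n}$ is therefore an infinite chain of Higgs bundles $(E,\theta)_{n+1},(E,\theta)'_{n+1},(E,\theta)''_{n+1},\dots$ each reducing correctly modulo $p^n$; there are no $\varphi_{n+1}$'s available, and there is no loop to close.

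Once you drop the periodicity assumption the ``hard part'' you identify --- forcing $C^{-1}(\alpha)=\beta$ --- disappears, and a much simpler induction along the sequence works: $C^{-1}$ applied to the fixed initial term determines the underlying de Rham bundle $(\w V_{n+1},\w\nabla_{n+1})$; Lemma~\ref{E1 degenarate} plus Theorem~\ref{thm: corollary of E1 degenarate}(2) show that the isomorphism class of the filtered de Rham bundle does not depend on the choice of lifted Hodge filtration; taking $\Gr$ determines the next graded Higgs bundle up to isomorphism; and one iterates. This is the paper's argument, and you already use the right two ingredients, so your first two steps are on target. The step you flag as hard is also genuinely problematic on its own terms: faithfulness of $C^{-1}$ prevents two distinct $\alpha$'s from mapping to the same $\beta$, but it does not show that the filtered isomorphism $\beta$ from Theorem~\ref{thm: corollary of E1 degenarate}(2) lies in the image of $C^{-1}$ at all, and $\mathbb H^0\big(\mathrm{Gr}^0\DR(\mEnd(E_1,\theta_1))\big)\to\mathbb H^0\big(\Fil^0\DR(\mEnd(V_1,\nabla_1))\big)$ has no reason to be surjective. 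So even in the strictly periodic case the proposed closure argument has a gap.
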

\begin{proof} Suppose there exists a Higgs-de Rham flow lifting $HDF_{X_n}$ with initial Higgs term $(E,\theta)_{n+1}$.
\begin{equation*} \tiny
\xymatrix@W=2cm@C=-8mm{
	&&\Big(V,\nabla,\Fil\Big)_{n+1}
	\ar[dr]^{\mathrm{Gr}}\ar@{.>}[ddd]
	&&\Big(V,\nabla,\Fil\Big)'_{n+1}
	\ar[dr]^{\mathrm{Gr}}\ar@{.>}[ddd]
	&& \cdots
	\ar[dr]^{\mathrm{Gr}}\ar@{.>}[ddd]
	\\
	&\left(E,\theta\right)_{n+1}
	\ar[ur]_{\mathcal C^{-1}_{n+1}}\ar@{.>}[ddd]_(0.3){\mod p^{n}}
	&&\left(E,\theta\right)_{n+1}'
	\ar[ur]_{\mathcal C^{-1}_{n+1}}\ar@{.>}[ddd]
	&&\left(E,\theta\right)_{n+1}'' 
	\ar[ur]_{\mathcal C^{-1}_{n+1}}\ar@{.>}[ddd]
	&& \cdots
	\ar@{.>}[ddd] 
	\\
	&&&&&&&\\
	\Big(V,\nabla,\Fil\Big)_{n}^{(f-1)}
	\ar[dr]^{\mathrm{Gr}}
	&&\Big(V,\nabla,\Fil\Big)_n
	\ar[dr]^{\mathrm{Gr}}
	&&\cdots
	\ar[dr]^{\mathrm{Gr}} 
	&&\Big(V,\nabla,\Fil\Big)_n^{(f-1)}
	\ar[dr]^{\mathrm{Gr}} 
	\\
	&\quad\left(E,\theta\right)_{n} 
	\ar[ur]_{\mathcal C^{-1}_{n}}
	&&\left(E,\theta\right)_{n}' 
	\ar[ur]_{\mathcal C^{-1}_{n}}
	&&\cdots
	\ar[ur]_{\mathcal C^{-1}_{n}}
	&&\left(E,\theta\right)_{n}^{(f-1)}
	\\}
\end{equation*} As we have fixed a lifting of the initial Higgs bundle, the freedom of choice of a lifting Higgs-de Rham flow are the choices of the Hodge filtrations $\Fil_{n+1},\Fil_{n+1}',\Fil_{n+1}'',\cdots$. Applying Theorem \ref{thm: corollary of E1 degenarate}(2) together with Lemma~\ref{E1 degenarate}, the isomorphism class of this filtered de Rham bundle $(V_{n+1},\nabla_{n+1},\Fil_{n+1})$ \emph{does not depend} on the choice of $\Fil_{n+1}$. We therefore see that the associated graded Higgs bundle
\[(E_{n+1},\theta_{n+1})':=\Gr_{\Fil_{n+1}}(V_{n+1},\nabla_{n+1})\] 
also and does not depend on the choice of the Hodge filtration $\Fil_{n+1}$. Inductively, it follows that each term in the lifted Higgs-de Rham flow is uniquely determined by the initial Higgs-bundle. Thus the theorem follows. 
\end{proof}
\begin{rmk}
Theorem~\ref{uniqueHDF} proves that there is at most one Higgs-de Rham flow lifting $HDF_{X_n}$ after fixing the initial graded Higgs term. However, we do not know that the lifted Higgs-de Rham flow is periodic; i.e., we don't know if the periodicity map is liftable. Even when the periodicity map is liftable, there may exist more than one lift. Therefore Theorem \ref{uniqueHDF} does not imply that there is at most one lift as a \emph{periodic} Higgs-de Rham flow. However, if the initial Higgs term is stable modulo $p$, then it easily follows that there is at most one lift, up to isomorphism, of $HDF_{X_n}$ as a \emph{periodic Higgs-de Rham flow}.
\end{rmk}

\subsection{The torsor map induced by the inverse Cartier transform}
Recall that
\[
\mathrm{Def}_{(V_{n},\nabla_{n})}(X_{n+1},D_{n+1}) 
= \left\{
\begin{array}{l}
\text{de Rham bundle } (\w V_{n+1},\w \nabla_{n+1}) \text{ over }  (X_{n+1},D_{n+1})/ S_{n+1}\\
\text{ with }  (\w V_{n+1},\w \nabla_{n+1})\mid_{X_n} = (V_n,\nabla_n)\\
\end{array} 
\right\}/\sim\]                                                   
and 
\[\mathrm{Def}_{(V_n,\nabla_n,\Fil_n)}(X_{n+1},D_{n+1}) = 
\left\{
\begin{array}{l}
\text{filtered de Rham bundle } (\w V_{n+1},\w \nabla_{n+1},\w \Fil_{n+1}) \text{ over } (X_{n+1},D_{n+1})/ S_{n_1}\\
\text{with }  (\w V_{n+1},\w \nabla_{n+1},\w \Fil_{n+1})\mid_{X_{n}} = (V_n,\nabla_n,\Fil_n) \\
\end{array} 
 \right\}/\sim.\] 
Similarly, denote 
\[\mathrm{Def}_{(E_n,\theta_n)}(X_{n+1},D_{n+1})
 = \left\{ \begin{array}{l}
\text{graded Higgs bundle $(\w E_{n+1},\w \theta_{n+1})$ over $(X_{n+1},D_{n+1})/\w S_{n+1}$}\\
\text{ with }  (\w E_{n+1},\w \theta_{n+1})\mid_{X_n} = (E_n,\theta_n)\\
\end{array}\right\}/\sim\]  
and 
\[\mathrm{Def}_{(X_{n+1},D_{n+1})}(S_{n+2})
 = \left\{  \begin{array}{l}
 \text{smooth log pair $(X_{n+2},D_{n+2})$ over $S_{n+2}$}\\
 \text{with } (X_{n+2},D_{n+2})\times_{S_{n+2}} S_{n+1} = (X_{n+1},D_{n+1}) 
\end{array} \right\}/\sim.\]

By Theorem~\ref{deformation_(E,theta,Gr)}, $\mathrm{Def}_{(E_{n},\theta_n)}(X_{n+1},D_{n+1})$ is an $\mathbb H^1\big(\mathrm{Gr}^0 \DR(\mEnd(E_1,\theta_1))\big)$-torsor if  $\mathrm{Def}_{(E_n,\theta_n)}(X_{n+1},D_{n+1})$ is not empty. By Theorem~\ref{deformation_(V,nabla)}, $\mathrm{Def}_{(V_n,\nabla_n)}(X_{n+1},D_{n+1})$ is an $\mathbb H^1\big( \DR(\mEnd(V_1,\nabla_1))\big)$-torsor  if  $\mathrm{Def}_{(V,\nabla)}(\w X,\w S)$ is not empty. It is well-known that $\mathrm{Def}_{(X_{n+1},D_{n+1})}(S_{n+2})$ is an $H^1(X_1,\mathcal T_{(X_1,D_1)/S_1})$-torsor if it is non-empty, see\cite[Prop. 3.14 (iii)]{Kato88}. Here, $T_{(X_1,D_1)/S_1}$ refers to the \emph{logarithmic Tangent bundle}.

\paragraph{\emph{The semilinear map $\alpha$.}} Denote by $F_{S_1}\colon S_1\rightarrow S_1$ the absolute Frobenius on $S_1$. Denote $(E_1',\theta_1')$ be the pull back Higgs bundle on $X_1'$ via the Frobenius map $F_{S_1}\colon X_1'\rightarrow X_1$. Then $(E_1',\theta_1')$ is the image of $(V_1,\nabla_1)$ under the Cartier functor constructed in Ogus-Vodogodsky, since $(V_1,\nabla_1)=C_{X_1\subset X_2}^{-1}(E_1,\theta_1)$. Thus by \cite[Corollary 2.27]{OgVo07}, one has an isomorphism 
\[\mathbb H^1\big(\DR(\mEnd(E_1',\theta_1'))\big) \rightarrow \mathbb H^1\big( \DR(\mEnd(V_1,\nabla_1))\big).\]
Composing with the Frobenius map $F_{S_1}^*\colon \mathbb H^1\big(\DR(\mEnd(E_1,\theta_1))\big) \rightarrow \mathbb H^1\big(\DR(\mEnd(E_1',\theta_1')\big)$, one gets a $\sigma$-semilinear map
\[\alpha \colon \mathbb H^1\big(\DR(\mEnd(E_1,\theta_1))\big) \rightarrow \mathbb H^1\big( \DR(\mEnd(V_1,\nabla_1))\big).\]
Since $(E,\theta)$ is a graded Higgs bundle, the $\mathbb H^1\big(\Gr^0\DR(\mEnd(E_1,\theta_1)\big)$ is a direct summand of $\mathbb H^1\big(\DR(\mEnd(E_1,\theta_1)\big)$. By abuse of notation, we still denote by $\alpha$ the restriction of $\alpha$ on this direct summand.

\paragraph{\emph{The semilinear map $\beta$.}} Let's recall the a map constructed in~\cite[Section 5]{LSYZ14}. Write the Higgs field $\theta_1$ of $E_1$ in the following way
\[\theta_1\colon (\mathcal T_{X_1/S_1},0) \rightarrow \mEnd(E_1,\theta_1).\]
Applying the inverse Cartier transform to $\theta_1$, one gets the corresponding morphism
\[C_{X_1\subset X_2}^{-1}(\theta_1)\colon (F^*\mathcal T_{X_1/S_1},\nabla_{\rm can}) \rightarrow \mEnd(V_1,\nabla_1).\]
Composing with the Frobenius map $F^*\colon \mathcal T_{X_1/S_1} \rightarrow F^*\mathcal T_{X_1/S_1}$ and then taking cohomology groups, one gets a map
\[\beta\colon H^1(X_1,\mathcal T_{X_1/S_1}) \longrightarrow \mathbb H^1\big( \DR(\mEnd(V_1,\nabla_1))\big)\]

\paragraph{The construction of truncated inverse Cartier transform.} Let $(\w E_{n+1},\w \theta_{n+1})$ be a graded Higgs bundle over $X_{n+1}$ and $(V_n,\nabla_n,\Fil_n)$ be a filtered de Rham bundle together with an isomorphism
\[\psi\colon \Gr_{\Fil_n}(V_n,\nabla_n) \rightarrow (\w E_{n+1},\w \theta_{n+1}) \mid_{X_n}\]
Then the \emph{inverse Cartier transform} yields filtered de Rham bundle 
\[C^{-1}_{(X_{n+1},D_{n+1})\subseteq (X_{n+2},D_{n+2})}\big((\w E_{n+1},\w \theta_{n+1}),(V_n,\nabla_n,\Fil_n),\psi\big)\]
over $X_{n+1}$ which is constructed as following. (See also \cite[Theorem 4.1]{LSZ13a} and its proof.)
\subparagraph{In small affine cases.} Assume $X_{n+2}$ is small affine with coordinates $t_1,\cdots,t_d$  and a Frobenius lifting $\Phi_{n+2}\colon X_{n+2}\rightarrow X_{n+2}$. This means the following: $\Phi_{n+2}$ preserves the divisor $D_{n+2}$ and lifts the the absolute Frobenius on $X_1$. We also assume all vector bundles appeared are free. By the freeness, there exists a filtered vector bundle with connection (not necessarily integrable) $(V_{n+1},\nabla_{n+1},\Fil_{n+1})$ over $X_{n+1}$ which lifts $(V_{n},\nabla_{n},\Fil_{n})$ and there exists an isomorphism which lifts $\psi$
\[\Gr_{\Fil_{n+1}}(V_{n+1},\nabla_{n+1})\xrightarrow{\sim} (\w E_{n+1},\w \theta_{n+1}).\] 
Applying Faltings' tilde functor on $(V_{n+1},\nabla_{n+1},\Fil_{n+1})$ \cite[Ch. 2]{Fal89} (see also the functor $G_n$ on \cite[p. 25-26]{LSZ13a})
one gets a vector bundle $\widetilde H$ over $X_{n+1}$ with a $p$-connection $\w \nabla$. Let's note that this vector bundle with $p$-connection 
\begin{equation}\label{local p-connection}
(\w H,\w \nabla)
\end{equation}
does not depend on the choice of the lifting $(V_{n+1},\nabla_{n+1},\Fil_{n+1})$. Taking the $\Phi_{n+2}$-pullback,
one constructs a de Rham bundle over $X_{n+1}$
\[C^{-1}_{(X_{n+1},D_{n+1})\subseteq (X_{n+2},D_{n+2})}\big((\w E_{n+1},\w \theta_{n+1}),(V_n,\nabla_n,\Fil_n),\psi\big) := \left(\Phi_{n+1}^*(\w H),\frac{\Phi_{n+2}^*}{p}(\w \nabla)\right).\]
This de Rham bundle does not depend on the choice of the Frobenius lifting upto an canonical isomorphism. i.e. For any other Frobenius lifting $\Psi_{n+2}$ on $X_{n+2}$, there is an canonical isomorphism
\[G_{\Phi_{n+2},\Psi_{n+2}}\colon \left(\Phi_{n+1}^*(\w H),\frac{\Phi_{n+2}^*}{p}(\w \nabla)\right) \rightarrow \left(\Psi_{n+1}^*(\w H),\frac{\Psi_{n+2}^*}{p}(\w \nabla)\right)\]
which is defined by a Taylor formula
\[ m\otimes_{\Phi^*_{n+1}} 1\mapsto \sum_{I} \widetilde\nabla_{n+1}(\partial)^I(m)\otimes_{\Psi^*_{n+1}} \frac{\big(\Phi^*_{n+2}(t)-\Psi^*_{n+2}(t)\big)^I}{I!\cdot p^{|I|}} \]
For more details about this type of Taylor formula, see the formula in proof of Theorem 2.3 in \cite{Fal89}.
\subparagraph{In general.}  
There exists covering of small affine open subsets $\{X_{n+2,i}\}_i$ of $X_{n+2}$ together with Frobenius liftings $\Phi_{n+2,i}$ on $X_{n+2,i}$  for each $i$. By taking a fine enough covering, one may assume all vector bundles appearing are free. According the construction in small affine cases, one gets a family local de Rham bundles
\[(H_i,\nabla_i)= C^{-1}_{(X_{n+1,i},D_{n+1,i})\subseteq (X_{n+2,i},D_{n+2,i})}\Big((\w E_{n+1},\w \theta_{n+1})\mid_{X_{n+1,i}},(V_n,\nabla_n,\Fil_n)\mid_{X_{n,i}}\Big).\]
One obtains the de Rham bundle $C^{-1}_{(X_{n+1},D_{n+1})\subseteq (X_{n+2},D_{n+2})}\big((\w E_{n+1},\w \theta_{n+1}),(V_n,\nabla_n,\Fil_n),\psi\big)$ by gluing $(H_i,\nabla_i)$ via the canonical isomorphisms $G_{ij}:= G_{\Phi_{n+2,i},\Phi_{n+2,j}}$. 
\begin{rmk}
	The local vector bundles with $p$-connections over $X_{n+1,i}$ as in (\ref{local p-connection}) can be glued into a global vector bundles with $p$-connection; Lan-Sheng-Zuo~\cite{LSZ13a} denote it as 
	\begin{equation}\label{p-connection}
	 \mathcal T_n(\w E_{n+1},\w \theta_{n+1},V_{n},\nabla_n,\Fil_n,\psi).
	\end{equation} 
	Once one obtains this $p$-connection, one can use the method in Faltings~\cite{Fal89} to construct a de Rham bundle	
	$C^{-1}_{(X_{n+1},D_{n+1})\subseteq (X_{n+2},D_{n+2})}\big((\w E_{n+1},\w \theta_{n+1}),(V_n,\nabla_n,\Fil_n),\psi\big)$. Locally pull back the $p$-connection with local Frobenius maps and then glue via Taylor formula.
\end{rmk}

Following the explicit construction of the inverse Cartier functor over the truncated level, one has following result. 
\begin{lem}\label{torsor pair} Fix an element $(\w E_{n+1},\w \theta_{n+1})\in \mathrm{Def}_{(E_n,\theta_n)}(X_{n+1},D_{n+1})$ and $(X_{n+2},D_{n+2}) \in \mathrm{Def}_{(X_{n+1},D_{n+1})}(S_{n+2})$. For any $\varepsilon \in \mathbb H^1\big(\mathrm{Gr}^0 \DR(\mEnd(E_1,\theta_1))\big)$ and any $\eta \in H^1(X_1,\mathcal T_{X_1/S_1})$, one has an isomorphism of logarithmic de Rham bundles on $(X_{n},D_n)/S_n$: 
\[C^{-1}_{X_{n+1}\subset (X_{n+2}+\eta)}\big((\w E_{n+1},\w \theta_{n+1})+\varepsilon\big) = C^{-1}_{X_{n+1}\subset X_{n+2}}(\w E_{n+1},\w \theta_{n+1}) +\alpha(\varepsilon)+\beta(\eta).\]
Here, $X_{n+2}+\eta$ is a lift of $(X_{n+1},D_{n+1})$ over $S_{n+2}$, $(\w E_{n+1},\w \theta_{n+1})+\varepsilon$ is a lift of $(E_n,\theta_n)$ over $(X_{n+1},D_{n+1})$, and $C^{-1}_{X_{n+1}\subset X_{n+2}}(\w E_{n+1},\w \theta_{n+1}) +\alpha(\varepsilon)+\beta(\eta)$ is a lift of $(V_{n},\nabla_n)$ over $(X_{n+1},D_{n+1})$.

In other words, the following diagram commutes 
	\begin{equation}
	\xymatrix{
		\mathrm{Def}_{(E_{n},\theta_{n})}(X_{n+1},D_{n+1}) \times \mathrm{Def}_{(X_{n+1},D_{n+1})}(S_{n+2})  
		\ar[d]^{1:1}\ar[rr]^(0.6){\mathrm{IC}}
		&  &\mathrm{Def}_{(V_n,\nabla_n)}(X_{n+1},D_{n+1}) 
		\ar[d]_{1:1}\\
		\mathbb H^1\big(\mathrm{Gr}^0 \DR(\mEnd(E_1,\theta_1))\big) \oplus H^1(X_1,T_{X_1}) \ar[rr]^(0.6){(\alpha,\beta)} && \mathbb H^1\big( \DR(\mEnd(V_1,\nabla_1))\big)\\
	}
	\end{equation}
	where the top horizontal arrow is the (parametrized) inverse Cartier transform $IC$, the left vertical arrow is given by the formula:
	
	$$((\w E_{n+1},\w \theta_{n+1})',(X_{n+2},D_{n+2})')
	\mapsto 
	(b((\w E_{n+1},\w \theta_{n+1}),(\w E_{n+1},\w \theta_{n+1})'), b((X_{n+2},D_{n+2}),(X_{n+2},D_{n+2})')),$$
	the right vertical arrow is given by the formula
	$$(\w V_{n+1},\w \nabla_{n+1})' \mapsto b((\w V_{n+1},\w \nabla_{n+1}),(\w V_{n+1},\w \nabla_{n+1})'),$$
	where 		
	$$(\w V_{n+1},\w \nabla_{n+1}) = C^{-1}_{(X_{n+1},D_{n+1})\subset (X_{n+2},D_{n+2})}(\w E_{n+1},\w \theta_{n+1}),$$ and $$(\w V_{n+1},\w \nabla_{n+1})' = C^{-1}_{(X_{n+1},D_{n+1})\subset (X_{n+2},D_{n+2})'}((\w E_{n+1},\w \theta_{n+1})')$$.	
\end{lem}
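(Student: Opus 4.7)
The plan is to unwind the explicit local construction of the (parametrized) inverse Cartier transform recalled just above the statement and track how each of the two perturbations propagates into a Cech $1$-cocycle for the de Rham complex of $\mEnd(V_1,\nabla_1)$.

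First, I would fix a small-affine covering $\{X_{n+2,i}\}$ of $X_{n+2}$ together with Frobenius liftings $\Phi_{n+2,i}$ on $X_{n+2,i}$, chosen fine enough that all vector bundles in sight are free. For the baseline data $((\w E_{n+1},\w\theta_{n+1}),(X_{n+2},D_{n+2}))$ this produces local de Rham bundles $(H_i,\nabla_i)$ glued by Taylor isomorphisms $G_{ij}=G_{\Phi_{n+2,i},\Phi_{n+2,j}}$. Because the two perturbations modify disjoint pieces of this local data---the class $\varepsilon$ twists the graded Higgs input while $\eta$ modifies the ambient Frobenius liftings---it suffices to verify the formula separately for $\varepsilon$ alone and for $\eta$ alone, together with a vanishing check for cross-terms; this last point is automatic since every twist lies in $\mathcal I_{n+1}\cdot\mEnd$ and $\mathcal I_{n+1}^2=0$.

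Next I would handle the perturbation by $\varepsilon$ with the Frobenius liftings fixed. A Cech representative of $\varepsilon$ in $\Gr^0\DR(\mEnd(E_1,\theta_1))$ determines a twist of the gluing for $(\w E_{n+1},\w\theta_{n+1})$. Running Faltings' tilde functor and then the pullback $\Phi_{n+1,i}^*/p$ propagates this twist through the construction: the tilde step identifies, modulo $p$, graded Higgs data with $p$-connection data, while $\Phi_{n+1,i}^*/p$ reduces modulo $p$ to the absolute Frobenius pullback $F_{S_1}^*$ composed with Ogus--Vologodsky's comparison isomorphism. That composition is by definition $\alpha$, so the resulting shift in $\mathrm{Def}_{(V_n,\nabla_n)}(X_{n+1},D_{n+1})$ is exactly $\alpha(\varepsilon)$.

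Then I would handle the perturbation by $\eta$ with $(\w E_{n+1},\w\theta_{n+1})$ fixed. A shift of $(X_{n+2},D_{n+2})$ by $\eta$ modifies each Frobenius lift to some $\Phi'_{n+2,i}$ with $\Phi'^{*}_{n+2,i}-\Phi^{*}_{n+2,i}=p\cdot\psi_i$, where $\{\psi_i\}$ is a Cech representative of $\eta\in H^1(X_1,\mathcal T_{X_1/S_1})$. Substituting this into the Taylor formula defining $G_{\Phi_{n+2,i},\Phi'_{n+2,i}}$, all terms of multi-index order $|I|\geq 2$ vanish because $\mathcal I_{n+1}^2=0$, leaving only the linear term. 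That linear term is exactly the evaluation of $C^{-1}_{X_1\subset X_2}(\theta_1)$ on $F^*\psi_i$, which is the Cech representative of $\beta(\eta)$ built in \cite[\S 5]{LSYZ14}.

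The main obstacle I expect is the bookkeeping in the Frobenius step: one must check carefully that the truncated Taylor formula interacts with the tilde construction in such a way that the surviving linear term lands on the nose at the Cech representative for $\beta(\eta)$, rather than at some twist of it by the connection $\nabla_n$ on the source. Once this is in place, additivity of the shifts follows from the torsor structures supplied by Proposition~\ref{deformation_(V,nabla)}, Theorem~\ref{deformation_(E,theta,Gr)}, and Kato's deformation theorem for log smooth schemes, and the commutative diagram in the statement drops out.
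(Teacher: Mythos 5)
Your overall strategy is the one the paper follows: fix a fine small-affine covering with Frobenius liftings, treat the two perturbations separately (the cross-term remark and the additivity via the torsor structures are fine), and for the $\varepsilon$-part your route through the tilde functor and the Frobenius pullback, identifying the resulting shift with $\alpha(\varepsilon)$, is exactly the paper's computation.

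However, your handling of the $\eta$-perturbation has a genuine gap. You model the passage from $(X_{n+2},D_{n+2})$ to $X_{n+2}+\eta$ as a chartwise replacement of the Frobenius liftings, $\Phi'^{*}_{n+2,i}-\Phi^{*}_{n+2,i}=p\cdot\psi_i$, with ``$\{\psi_i\}$ a Cech representative of $\eta$''. A class in $H^1(X_1,\mathcal T_{X_1/S_1})$ is represented by a $1$-cocycle $(\eta_{ij})$ on double overlaps, not by sections indexed by single opens; and, more importantly, changing the Frobenius lifting chart by chart does not change the input data at all: by the independence of the construction from the choice of Frobenius lifting (the canonical Taylor isomorphisms $G_{\Phi,\Psi}$), your recipe would produce a de Rham bundle canonically isomorphic to $C^{-1}_{X_{n+1}\subset X_{n+2}}(\w E_{n+1},\w\theta_{n+1})$, i.e.\ shift zero rather than $\beta(\eta)$. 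The mechanism that actually produces $\beta(\eta)$ is a regluing of the thickened scheme: $X_{n+2}+\eta$ is obtained from the same charts $X_{n+2,i}$ glued by the twisted maps $g_{ij}$ with $g_{ij}^*(t_k)=t_k+p^{n+1}\,\eta_{ij}(\mathrm{d}t_k)$ (note the factor $p^{n+1}$, not $p$, since the ideal of $X_{n+1}$ in $X_{n+2}$ is generated by $p^{n+1}$). The local outputs $(H_i,\nabla_i)$ are then unchanged, but they are glued by modified Taylor isomorphisms $G'_{ij}$ in which $\Phi^{*}_{n+2,j}(t)$ is replaced by $\Phi^{*}_{n+2,j}\big((g_{ij}^{-1})^*(t)\big)$, and the resulting difference class $b\big((H,\nabla),(H,\nabla)'\big)=\big((G'_{ij}-G_{ij})_{ij},(0)_i\big)$ lives on double overlaps; its only surviving Taylor term (higher terms do vanish by the square-zero property, as you observe) is the evaluation of $C^{-1}(\theta_1)$ on the Frobenius pullback of $\eta_{ij}$, which is precisely a Cech representative of $\beta(\eta)$. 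So the computation you outline must be redone with the deformation entering through the transition data $(\eta_{ij})$ of the scheme, not through chartwise changes of the Frobenius lifts; as written, that step would fail.
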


\begin{proof} 
Set the following notation:
\begin{equation*}	
\begin{split}
 & (H,\nabla) := C^{-1}_{X_{n+1}\subset X_{n+2}}(\w E_{n+1},\w \theta_{n+1});\\		
& (H,\nabla)' :=  C^{-1}_{X_{n+1}\subset (X_{n+2}+\eta)}\big((\w E_{n+1},\w \theta_{n+1})\big); \\
&  (H,\nabla)'':=C^{-1}_{X_{n+1}\subset (X_{n+2})}\big((\w E_{n+1},\w \theta_{n+1})+\varepsilon\big).\\	
\end{split}
\end{equation*}
Then we need to show 
\begin{equation}\label{part1}
(H,\nabla)' = (H,\nabla)+\beta(\eta),
\end{equation}
and 
\begin{equation}\label{part2}
(H,\nabla)'' = (H,\nabla)+\alpha(\varepsilon).
\end{equation} 
By definition, $(H,\nabla)$ is the de Rham bundle glued from local de Rham bundles
\[(H_i,\nabla_i)= \left(\Phi_{n+1,i}^*(\w H),\frac{\Phi_{n+2,i}^*}{p}(\w \nabla)\right)\]
via local isomorphisms $G_{ij}\colon (H_i,\nabla_i)\rightarrow (H_j,\nabla_j)$ over $X_{n+1,i}\cap X_{n+1,j}$ with 
\begin{equation}
G_{ij}(m\otimes_{\Phi^*_{n+1,i}}1) = \sum_{I} \widetilde\nabla_{n+1}(\partial)^I(m)\otimes_{\Phi^*_{n+1,j}} \frac{\Big(\Phi^*_{n+2,i}(t)-\Phi^*_{n+2,j} (t) \Big)^I}{I!\cdot p^{|I|}}\
\end{equation}
\paragraph{(\ref{part1}).}. Let $(\eta_{ij})_{ij}$ be a $1$-cocycle  representing the class $\eta$. Then $X_{n+2}':=X_{n+2} +\eta$ is the scheme constructed by gluing the local schemes $X_{n+2,i}$ via local isomorphisms $g_{ij}\colon X_{n+2,i}\cap X_{n+2,j} \rightarrow X_{n+2,i}\cap X_{n+2,j}$, which are the unique isomorphism
determined by 
\[g_{ij}^*(t_k) = t_k + p^{n+1}\cdot \eta_{ij}(\mathrm{d}t_{k})\]
where $t_1,\cdots,t_d$ is a system local parameters on $ X_{n+2,i}\cap X_{n+2,j}$. Then $(H,\nabla)'$ is the de Rham bundle glued from $(H_i,\nabla_i)$ via local isomorphisms $G'_{ij}\colon (H_i,\nabla_i)\rightarrow (H_j,\nabla_j)$ with 
\begin{equation}
G'_{ij}(m\otimes_{\Phi^*_{n+1,i}}1) = \sum_{I} \widetilde\nabla_{n+1}(\partial)^I(m)\otimes_{\Phi^*_{n+1,j}} \frac{\Big[\Phi^*_{n+2,i}(t)-\Psi^*_{n+2,j}\big((g_{ij}^{-1})^*(t)\big)\Big]^I}{I!\cdot p^{|I|}}\
\end{equation}
By definition, $b((V,\nabla),(V,\nabla)') =((G_{ij}'-G_{ij})_{ij},(0)_{i})$. It is a representative of the class $\beta(\eta)$ by explicit computation. Thus (\ref{part1}) holds. 
\paragraph{(\ref{part2}).} According the construction of $\mathcal T$ in \ref{p-connection}, one has   
\[(\widetilde{H},\widetilde{\nabla})'' = (\widetilde{H},\widetilde{\nabla}) +\varepsilon.\] 
where 
\[(\widetilde{H},\widetilde{\nabla}) =\mathcal T_{n+1} \Big((\w E_{n+1},\w \theta_{n+1}),(V_n,\nabla_n,\Fil_n)\Big) \text{ and } (\widetilde{H},\widetilde{\nabla})'' = \mathcal T_{n+1}\Big((\w E_{n+1},\w \theta_{n+1})+\varepsilon,(V_n,\nabla_n,\Fil_n)\Big).\]
In particular, for any fix local isomorphisms $f_i \colon \w H_i\rightarrow \w H_i''$; then the class $\varepsilon$ is represented by 
\[\Big((f_j-f_i)_{ij},(\w \nabla_i''\circ f_i - f_i\circ \w\nabla_i)_{i}\Big).\]
Taking the Frobenius pullback of $f_i$, one gets local isomorphisms $\Phi_{n+1,i}^*f_i \colon \Phi_{n+1,i}^*\w H_i\rightarrow \Phi_{n+1,i}^*\w H_i''$. Thus by definition $b((V,\nabla),(V,\nabla)'')$ is represented by 
\[\Big(\big(G_{ij}''\circ\Phi_{n+1,i}^*(f_i) - \Phi_{n+1,j}^*(f_j)\circ G_{ij}\big),\big(\frac{\Phi_{n+2,i}^*}{p}(\w \nabla''_i)\circ\Phi_{n+1,i}^*(f_i) - \Phi_{n+1,j}^*(f_j)\circ \frac{\Phi_{n+2,i}^*}{p}(\w \nabla_i)\big)\Big)  \]
Then by explicit computation, one shows that it is a represetation of the class $\alpha(\varepsilon)$. Thus 
\[(V,\nabla)'' = (V,\nabla) + \alpha(\varepsilon).\qedhere\] 
\end{proof}

\subsection{The condition of being \emph{ordinary}}\label{subsection:ordinary}

By Theorem~\ref{thm: corollary of E1 degenarate} and Lemma~\ref{E1 degenarate}, one has an injective map
$$\mathrm{Def}_{(V_n,\nabla_n,\Fil_n)}(X_{n+1},D_{n+1}) \hookrightarrow \mathrm{Def}_{(V_n,\nabla_n)}(X_{n+1},D_{n+1}).$$
The image is the kernel of $ob_{\Fil_n}$, by the definition of $ob_{\Fil_n}$. Set  
\[\mathbb K = \mathbb K_k = \mathrm{IC}^{-1}(\mathrm{Def}_{(V_{n},\nabla_n,\Fil_n)}(X_{n+1},D_{n+1}))= \ker(ob_{\Fil_n}\circ \mathrm{IC}).\]
In other words, $\mathbb K$ consists of pairs of a logarithmic Higgs bundle $(\w E_{n+1},\w \theta_{n+1})$ on $(X_{n+1},D_{n+1})/_{S_{n+1}}$ lifting $(E_n,\theta_n)$ and a lift $(\hat{X}_{n+2},\hat{D}_{n+2})/S_{n+2}$ of the smooth pair $(X_{n+1},D_{n+1}/S_{n+1}$ such that the inverse Cartier of $(\w E_{n+1},\w \theta_{n+1})$ with respect to the thickening $(X_{n+1},D_{n+1})\subset (X_{n+2},D_{n+2})$ is isomorphic to a de Rham bundle $(V_{n+1},\nabla_{n+1})$ on which $\Fil_{n}$ lifts. Also set
\[H = (\alpha,\beta)^{-1}(\mathbb H^1\big( \Fil^0 \DR(\mEnd(V_1,\nabla_1))\big)).\]

To aid the reader, we briefly recall what this is. Lemma~\ref{E1 degenarate}, one has an short exact sequence
\[0
\rightarrow \mathbb H^1\Big(\Fil^0 \DR(\mEnd(V_1,\nabla_1))\Big) 
\overset\iota\longrightarrow \mathbb H^1\Big(\DR(\mEnd(V_1,\nabla_1)) \Big)
\overset\pi\longrightarrow \mathbb H^1\Big(\mathscr C \Big) \rightarrow 0.\]
Thus $H$ is just the kernel of the $\sigma$-semilinear map $\pi\circ (\alpha,\beta)$
\[H = \ker(\pi\circ (\alpha,\beta)).\]
In partcular, $H$ is a $k$ vector subspace of 	$\mathbb H^1\big(\mathrm{Gr}^0 \DR(\mEnd(E_1,\theta_1))\big) \oplus H^1(X_1,\mathcal T_{X_1})$.

By lemma~\ref{torsor pair}, if $\mathbb K$ is not empty then it is an $H$-torsor.
\begin{equation}
\xymatrix{
	\mathbb K  \ar@{^{(}->}[d] \ar[r]^-{IC}  
	&  
	\mathrm{Def}_{(V_n,\nabla_n,\Fil_n)}(X_{n+1},D_{n+1}) 
	\ar@{^{(}->}[d] \ar[r]^-{\mathrm{Gr}} 
	&
	\mathrm{Def}_{(E_n,\theta_n)}(X_{n+1},D_{n+1}) 
	\\
	\mathrm{Def}_{(E_n,\theta_n)}(X_{n+1},D_{n+1}) \times \mathrm{Def}_{(X_{n+1},D_{n+1})}(S_{n+2}) \ar[r]^-{IC}
	& 
	\mathrm{Def}_{(V_n,\nabla_n)}(X_{n+1},D_{n+1}) 
	\ar[d]^{ob_{\Fil_n}}\\ 
	&
	\mathbb{H}^1(\mathscr{C}) 
	\\
}
\end{equation}

\begin{equation}
\xymatrix{
	H  \ar@{^{(}->}[d] \ar[r]^-{(\alpha,\beta)}  
	&  
	\mathbb H^1\big( \Fil^0 \DR(\mEnd(V_1,\nabla_1))\big)
	\ar@{^{(}->}[d]^{\iota} \ar[r]^-{\mathrm{Gr}} 
	&
	\mathbb H^1\big( \Gr^0 \DR(\mEnd(E_1,\theta_1))\big)
	\\
	\mathbb H^1\big(\mathrm{Gr}^0 \DR(\mEnd(E_1,\theta_1))\big) \oplus H^1(X_1,\mathcal T_{X_1}) 
	\ar[r]^(0.6){(\alpha,\beta)}  
	&
	\mathbb H^1\big( \DR(\mEnd(V_1,\nabla_1))\big) \ar[d]^{\pi}\\
	&
	\mathbb H^1(\mathscr C)\\
}
\end{equation}

\begin{thm}\label{main_thm} Suppose $\mathbb K$ is not empty and the projection $H\rightarrow \mathbb H^1\big(\mathrm{Gr}^0 \DR(\mEnd(E_1,\theta_1))\big)$ is surjective. Then there exists some finite extension $k'/k$ and $((\w E_{n+1},\w \theta_{n+1}),(X_{n+2},D_{n+2})) \in \mathbb K_{k'}$ such that
	\[\Gr\circ \mathrm{IC}((\w E_{n+1},\w \theta_{n+1}),(X_{n+2},D_{n+2})) = (\w E_{n+1},\w \theta_{n+1}).\]
	In other words, $(\w E_{n+1},\w \theta_{n+1})$ is a $1$-periodic Higgs bundle under the lifting $(X_{n+1},D_{n+1})\subset (X_{n+2},D_{n+2})$.	
\end{thm}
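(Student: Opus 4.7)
The plan is to begin with an arbitrary element of $\mathbb K$ and twist it by the torsor action so that the resulting point becomes $1$-periodic.

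Fix a base point $x_0 = ((\w E^0_{n+1},\w\theta^0_{n+1}),(X^0_{n+2},D^0_{n+2})) \in \mathbb K$. By the definition of $\mathbb K$, the de Rham bundle $IC(x_0)$ admits a lift of $\Fil_n$, and by Theorem~\ref{thm: corollary of E1 degenarate}(2) together with Lemma~\ref{E1 degenarate} this filtered de Rham bundle is unique up to isomorphism. Hence
\[\Phi(x_0) := \Gr\circ IC(x_0)\in \mathrm{Def}_{(E_n,\theta_n)}(X_{n+1},D_{n+1})\]
is well defined (using $\varphi$ to identify $\Gr(V_n,\nabla_n,\Fil_n)$ with $(E_n,\theta_n)$). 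Set
\[\varepsilon_0 := b\bigl((\w E^0_{n+1},\w\theta^0_{n+1}),\,\Phi(x_0)\bigr) \in \mathbb H^1\bigl(\Gr^0\DR(\mEnd(E_1,\theta_1))\bigr).\]
The theorem is equivalent to finding, after a finite base change $k'/k$, an element $(\delta,\eta)\in H_{k'}$ such that the shifted point $x_0 + (\delta,\eta)\in\mathbb K_{k'}$ satisfies $\Phi = \mathrm{proj}_1$, where $\mathrm{proj}_1$ denotes the projection onto the first coordinate.

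Applying Lemma~\ref{torsor pair} to compute the shift of $IC$ and observing that $(\alpha,\beta)(\delta,\eta)$ lies in the image of the canonical injection $\mathbb H^1(\Fil^0\DR(\mEnd(V_1,\nabla_1)))\hookrightarrow \mathbb H^1(\DR(\mEnd(V_1,\nabla_1)))$ (by the definition of $H$ and Lemma~\ref{prop:splitting} combined with Lemma~\ref{E1 degenarate}), one invokes Lemma~\ref{torsor map} and the functoriality of the associated-graded construction to obtain
\[\mathrm{proj}_1\bigl(x_0+(\delta,\eta)\bigr) = (\w E^0_{n+1},\w\theta^0_{n+1}) + \delta,\qquad \Phi\bigl(x_0+(\delta,\eta)\bigr) = \Phi(x_0) + \Gr\bigl(\alpha(\delta)+\beta(\eta)\bigr),\]
where the right-hand $\Gr$ denotes the natural morphism $\mathbb H^1(\Fil^0\DR(\mEnd(V_1,\nabla_1)))\to \mathbb H^1(\Gr^0\DR(\mEnd(E_1,\theta_1)))$. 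Thus enforcing $\Phi = \mathrm{proj}_1$ after the shift reduces to solving
\[\delta - \Gr\bigl(\alpha(\delta)+\beta(\eta)\bigr) = \varepsilon_0,\qquad (\delta,\eta)\in H.\]

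Using the surjectivity hypothesis, fix a $k$-linear section $s\colon \mathbb H^1(\Gr^0\DR(\mEnd(E_1,\theta_1)))\to H$ and set $F(\delta) := \Gr\circ(\alpha,\beta)\circ s(\delta)$. Because $\alpha$ and $\beta$ are $\sigma$-semilinear (as $\alpha$ is built from the Frobenius pull-back composed with the Ogus--Vologodsky equivalence, and $\beta$ factors through the Frobenius pull-back on the tangent cohomology), the composition $F$ is a $\sigma$-semilinear endomorphism of the finite-dimensional $k$-vector space $V := \mathbb H^1(\Gr^0\DR(\mEnd(E_1,\theta_1)))$. The displayed equation becomes $(\mathrm{id}-F)(\delta) = \varepsilon_0$. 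Viewed as a morphism of affine spaces $V_{\bar k}\to V_{\bar k}$, the map $\mathrm{id}-F$ has Jacobian equal to the identity (the Frobenius-twisted term contributes zero in characteristic~$p$), hence is \'etale, and a standard Lang-type argument gives its surjectivity. Accordingly a solution $\delta$ exists in $V_{\bar k}$ and is in fact defined over some finite extension $k'/k$; base-changing $x_0$ to $S_{n+1}(k')$ and shifting by $s(\delta)\in H_{k'}$ produces the desired element of $\mathbb K_{k'}$.

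The chief technical hurdle is the second paragraph: verifying that the associated-graded functor intertwines the torsor actions of $\mathbb H^1(\Fil^0\DR(\mEnd(V_1,\nabla_1)))$ and $\mathbb H^1(\Gr^0\DR(\mEnd(E_1,\theta_1)))$ via the natural projection between them, and that the isomorphism-only (rather than on-the-nose) uniqueness of the filtration lift from Theorem~\ref{thm: corollary of E1 degenarate}(2) causes no ambiguity in the $b$-difference computation. Both compatibilities ultimately reduce to unwinding the explicit Cech cocycle descriptions in the proofs of Theorems~\ref{deformation_(V,nabla,Fil)} and~\ref{deformation_(E,theta,Gr)}.
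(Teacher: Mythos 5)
Your proposal is correct and takes essentially the same route as the paper: fix a base point of $\mathbb K$, use a section $(\mathrm{id},\tau)$ of the surjective projection $H\rightarrow \mathbb H^1\big(\Gr^0\DR(\mEnd(E_1,\theta_1))\big)$, invoke Lemma~\ref{torsor pair} to see how $\mathrm{IC}$ shifts under the torsor action, and reduce periodicity to the Artin--Schreier-type equation $\delta-\Gr\big(\alpha(\delta)+\beta(\tau(\delta))\big)=\varepsilon_0$, solved after a finite extension of $k$. The $\Gr$-versus-torsor compatibility you flag as the main hurdle is used implicitly in the paper's computation as well, and your explicit Lang-type argument merely spells out the solvability the paper asserts in one line.
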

\begin{proof}
	Since the projection $H\rightarrow \mathbb H^1\big(\mathrm{Gr}^0 \DR(\mEnd(E_1,\theta_1))\big)$ is surjective, we may choose a linear section 
	\[(\mathrm{id},\tau)\colon \mathbb H^1\big(\mathrm{Gr}^0 \DR(\mEnd(E_1,\theta_1))\big) \rightarrow H\subset 	\mathbb H^1\big(\mathrm{Gr}^0 \DR(\mEnd(E_1,\theta_1))\big) \oplus H^1(X_1,T_{X_1}).\]
	Choose any $((\w E_{n+1},\w \theta_{n+1})',(X_{n+2},D_{n+2})') \in \mathbb K$. Denote $(\w V_{n+1},\w\nabla_{n+1},\w\Fil_{n+1})' = \mathrm{IC}((\w E_{n+1},\w \theta_{n+1})',(X_{n+2},D_{n+2})')$. We identify $(E_m,\theta_n)$ with $\Gr(V_n,\nabla_n,\Fil_n)$ via the periodicity map $\varphi$. Then $(\w E_{n+1},\w \theta_{n+1})'$ and $\Gr((\w V_{n+1},\w\nabla_{n+1},\w\Fil_{n+1}))'$ are both liftings of $(E_n,\theta_n)$. Let $\Delta$ be the element in $\mathbb H^1\big( \Gr^0 \DR(\mEnd(E_1,\theta_1))$ such that
	\[\Gr(\w V_{n+1},\w\nabla_{n+1},\w\Fil_{n+1})' = (\w E_{n+1},\w \theta_{n+1})' + \Delta.\] 
	For $\epsilon \in \mathbb H^1\big( \Gr^0 \DR(\mEnd(E_1,\theta_1))$ denote $(\w E_{n+1},\w \theta_{n+1}) = (\w E_{n+1},\w \theta_{n+1})' +\epsilon$ and $(X_{n+2},D_{n+2}) =(X_{n+2},D_{n+2})' + \tau(\epsilon)$. By the choice of $\tau$, $((\w E_{n+1},\w \theta_{n+1}),(X_{n+2},D_{n+2}))\in \mathbb K$. Denote $(\w V_{n+1},\w\nabla_{n+1},\w\Fil_{n+1}) = \mathrm{IC}((\w E_{n+1},\w \theta_{n+1}),(X_{n+2},D_{n+2}))$. Then
	\begin{equation}
	\begin{split}
	\Gr\circ C^{-1}_{(X_{n+1},D_{n+1})\subset (X_{n+2},D_{n+2})}(\w E_{n+1},\w \theta_{n+1})
	& =\Gr((\w V_{n+1},\w\nabla_{n+1},\w\Fil_{n+1})) \\
	& =  \Gr\Big((\w V_{n+1},\w\nabla_{n+1},\w\Fil_{n+1})' + \alpha(\epsilon) + \beta(\tau(\epsilon))\Big) \\
	& = \Gr\big(\w V_{n+1},\w\nabla_{n+1},\w\Fil_{n+1}\big)' + \Gr\circ\alpha(\epsilon) + \Gr\circ\beta\circ\tau(\epsilon)\\
	& = (\w E_{n+1},\w \theta_{n+1}) + \Delta -\epsilon  + \Gr\circ\alpha(\epsilon) + \Gr\circ\beta\circ\tau(\epsilon)\\
	\end{split} 
	\end{equation}
	Thus $(\w E_{n+1},\w \theta_{n+1})$ is periodic if and only if the following  equation holds.
	\begin{equation}\label{equ:Artin-Schreier}
	\Delta -\epsilon  + \Gr\circ\alpha(\epsilon) + \Gr\circ\beta\circ\tau(\epsilon)=0.
	\end{equation}
	Since $\Gr$ and $\tau$ are linear and $\alpha$, $\beta$ are $\sigma$-semilinear the equation~\ref{equ:Artin-Schreier} is of \emph{Artin-Schreier} type. Equation~\ref{equ:Artin-Schreier} is solvable after extending the base field $k$, and hence the theorem follows.
\end{proof}

\begin{lem}
	If $\pi\circ \beta\colon H^1(X_1,\mathcal T_{X_1/S_1}) \rightarrow \mathbb H^1(\mathscr C)$ is surjective, then so is the projection $H\rightarrow \mathbb H^1\big(\mathrm{Gr}^0 \DR(\mEnd(E_1,\theta_1))\big)$.
\end{lem}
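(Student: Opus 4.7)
The plan is to unwind the definitions and reduce to a trivial piece of linear algebra: $H$ is the kernel of the $\sigma$-semilinear composition $\pi \circ (\alpha,\beta)$, and the assertion is really that this kernel projects onto the first factor whenever $\pi \circ \beta$ hits everything in $\mathbb H^1(\mathscr C)$.

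Fix an arbitrary class $\varepsilon \in \mathbb H^1\big(\mathrm{Gr}^0 \DR(\mEnd(E_1,\theta_1))\big)$. I would then apply the map $\pi \circ \alpha$ to produce $\pi(\alpha(\varepsilon)) \in \mathbb H^1(\mathscr C)$, and use the hypothesis that $\pi \circ \beta \colon H^1(X_1,\mathcal T_{X_1/S_1}) \to \mathbb H^1(\mathscr C)$ is surjective to lift $-\pi(\alpha(\varepsilon))$ to some $\eta \in H^1(X_1,\mathcal T_{X_1/S_1})$, i.e.\ $\pi(\beta(\eta)) = -\pi(\alpha(\varepsilon))$. By $k$-linearity of $\pi$ and of the map $(\alpha,\beta)\colon \mathbb H^1\big(\mathrm{Gr}^0 \DR(\mEnd(E_1,\theta_1))\big) \oplus H^1(X_1,\mathcal T_{X_1/S_1}) \to \mathbb H^1\big(\DR(\mEnd(V_1,\nabla_1))\big)$ (semilinear suffices), the pair $(\varepsilon,\eta)$ then satisfies
\[
\pi\bigl((\alpha,\beta)(\varepsilon,\eta)\bigr) \;=\; \pi(\alpha(\varepsilon)) + \pi(\beta(\eta)) \;=\; 0,
\]
so $(\varepsilon,\eta) \in H$ by the very definition $H = \ker\bigl(\pi \circ (\alpha,\beta)\bigr)$ recorded in \S\ref{subsection:ordinary}.

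Finally, the first projection $H \to \mathbb H^1\big(\mathrm{Gr}^0 \DR(\mEnd(E_1,\theta_1))\big)$ sends $(\varepsilon,\eta)$ to $\varepsilon$, and since $\varepsilon$ was arbitrary this projection is surjective. There is no real obstacle here: the only non-obvious input is the identification of $H$ as the kernel of $\pi \circ (\alpha,\beta)$, which was already established using the $E_1$-degeneration of Lemma~\ref{E1 degenarate} and the resulting short exact sequence from Lemma~\ref{prop:splitting}. Everything else is chasing the commutative diagram displayed immediately before the statement.
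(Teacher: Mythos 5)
Your proof is correct and follows essentially the same argument as the paper: fix $\varepsilon$, use surjectivity of $\pi\circ\beta$ to find $\eta$ with $\pi(\beta(\eta))=-\pi(\alpha(\varepsilon))$, and conclude $(\varepsilon,\eta)\in H=\ker(\pi\circ(\alpha,\beta))$.
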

\begin{proof}
	For any $\varepsilon \in \mathbb H^1\big(\mathrm{Gr}^0 \DR(\mEnd(E_1,\theta_1))\big)$, $-\pi\circ\alpha(\varepsilon) \in \mathbb H^1(\mathscr C)$. Since $\pi\circ \beta$ is surjective, there exists $\eta \in H^1(X_1,\mathcal T_{X_1/S_1})$ such that 
	\[-\pi\circ\alpha(\varepsilon) = \pi\circ\beta(\eta).\]
	Since $H =(\alpha,\beta)^{-1}(\mathbb H^1\big( \Fil^0 \DR(\mEnd(V_1,\nabla_1))\big))= \ker (\pi\circ(\alpha,\beta))$, one has $(\varepsilon,\eta)\in H$. By the arbitrary choice of $\varepsilon$, the projection $H\rightarrow \mathbb H^1\big(\mathrm{Gr}^0 \DR(\mEnd(E_1,\theta_1))\big)$ is surjective.	
\end{proof}

\end{document}